\documentclass[12pt]{amsart}

\usepackage{tikz-cd}
\usepackage{amsfonts}
\usepackage{amsthm}
\usepackage{xfrac}
\usepackage{faktor}
\usepackage{amssymb}
\usepackage{enumitem}
\usepackage{microtype}
\usepackage{amsthm}
\usepackage{amsmath}
\usepackage{amssymb}
\usepackage{mathtools}
\usepackage{tikz-cd}
\usepackage{microtype}
\usepackage{calrsfs}
\DeclareMathAlphabet{\pazocal}{OMS}{zplm}{m}{n}
\usepackage{mathabx}
\usepackage{url}

\usepackage{amsmath}
\usepackage[all]{xy}
\usepackage{hyperref}

\title{ ON STRONGLY FP-INJECTIVE ACCESSIBILITY AND WEAKLY FP-PROJECTIVE PERIODICITY}
\author{Ilias Kaperonis and Konstantinos Golfis}

\thanks{}

\newtheorem{Lemma}{Lemma}[section]
\newtheorem{Proposition}[Lemma]{Proposition}
\newtheorem{Theorem}[Lemma]{Theorem}
\newtheorem{Corollary}[Lemma]{Corollary}
\newtheorem{Remark}[Lemma]{Remark}
\newtheorem{Definition}[Lemma]{Definition}
\newtheorem{Example}[Lemma]{Example}
\newtheorem*{theoremstar}{Theorem}

\begin{document}

\begin{abstract} 
Accessibility results are established for (Cartan-Eilenberg) strongly FP-injective complexes, by first proving the accessibility of Strongly FP-injective modules and then utilizing the pseudopullback theorem. We also prove that every acyclic complex of weakly FP-projective modules has weakly FP-projective cycles over left $<\aleph_m$-coherent rings for a non-negative integer $m$. This is achieved by introducing the cotorsion pairs $\bigl(\mathcal{C}\aleph_mP\text{-}proj,\mathcal{C}\aleph_mP\text{-}inj\bigr)$, cogenerated by all $\aleph_m$-presentable weakly FP-projective modules, and proving that acyclic complexes of modules from the class $\mathcal{C}\aleph_mP\text{-}proj$ have weakly FP-projective cycles for every non-negative integer $m$.

\end{abstract}

\maketitle
\tableofcontents

\section{Introduction}

In homological algebra, duality plays an important role. Flat modules seem to have two natural dual counterparts, namely,  absolutely pure modules (also called FP-injective modules) and strongly FP-injective modules. Flat modules and absolutely pure modules have been studied for many years, but recently a lot of work has also been done on strongly FP-injective modules (see, for example, \cite{Bazzoni2}, \cite{EK}, \cite{ET} and \cite{LGO}). Flat modules can be defined by declaring a module $F$ to be flat if every epimorphism with codomain $F$ is pure. Dualizing this definition one gets absolutely pure modules, meaning that a module $A$ is absolutely pure if and only if every monomorphism with domain $A$ is pure. Another defining property of flat modules is that flat modules always have pure acyclic projective resolutions. Dualizing this definition we get the class of strongly FP-injective modules, meaning that a module $S$ is strongly FP-injective  if and only if every injective resolution of $S$ is a pure acyclic complex. We point out that all left strongly FP-injective modules are left absolutely pure modules and that the two concepts coincide if and only if the underlying ring is left coherent.

Of course, due to the definition, flat modules share dual (or analogous) properties with both absolutely pure and strongly FP-injective modules. For example, the class $\mathsf{Flat}$ of all flat modules is the left hand side of the cotorsion pair $\bigl(\mathsf{Flat},\mathsf{Cot}\bigr)$ where $\mathsf{{Cot}}$ is the class of all cotorsion modules. The classes $\mathsf{fpInj}$ of all absolutely pure modules and $\mathsf{SfpInj}$ of all strongly FP-injective modules are the right hand sides of the cotorsion pairs $\bigl(\mathsf{fpProj},\mathsf{fpInj}\bigr)$ and $\bigl(\mathsf{WfpProj},\mathsf{SfpInj}\bigr)$ respectively. Modules in the class $\mathsf{fpProj}$ are called FP-projective modules and modules in the class $\mathsf{WfpProj}$ are called weakly FP-projective modules. All three cotorsion pairs are complete while only $\bigl(\mathsf{Flat},\mathsf{Cot}\bigr)$ and $\bigl(\mathsf{WfpProj},\mathsf{SfpInj}\bigr)$ are hereditary in general.

The failure of the hereditary condition for $\bigl(\mathsf{fpProj},\mathsf{fpInj}\bigr)$ suggests that strongly FP-injective modules share more dual properties with flat modules than absolutely pure modules. One instance of this, is a, in his own words, puzzling result of \v{S}\v{t}ov\'{\i}\v{c}ek, namely, every acyclic complex of injective modules has cotorsion cycles (actually every such cycle is right orthogonal to all modules of finite flat dimension, see \cite[Corollary 5.9]{Stovicekcoderived}). The dual result is proved in \cite[Corollary 4.9(ii)]{EK}: The cycles of an acyclic complex of projective modules are left orthogonal to all modules of finite strongly FP-injective dimension. Bazzoni, Cortés-Izurdiaga and Estrada generalized the already mentioned result of \v{S}\v{t}ov\'{\i}\v{c}ek in \cite{Bazzoni}, proving that every cycle of an acyclic complex of cotorsion modules is itself cotorsion. A natural question arises: 
\begin{center}
    Is every cycle of an acyclic complex of weakly FP-projective modules itself weakly FP-projective?
\end{center} 
We give a positive answer for $<\aleph_m$-coherent rings when $m$ is a non-negative integer. This appears as Corollary \ref{Maincorollary} in the main text:
\begin{theoremstar}Let $R$ be a $<\aleph_m$-coherent ring where $m$ is a non-negative integer. Then acyclic complexes of weakly FP-projective modules have weakly FP-projective cycles.
    
\end{theoremstar}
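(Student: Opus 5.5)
Following the abstract, I would introduce for each non-negative integer $k$ the cotorsion pair $\bigl(\mathcal{C}\aleph_kP\text{-}proj,\mathcal{C}\aleph_kP\text{-}inj\bigr)$ cogenerated by a set of representatives of the $\aleph_k$-presentable weakly FP-projective modules. Since it is cogenerated by a set, it is complete, and $\mathcal{C}\aleph_kP\text{-}proj$ consists of the direct summands of modules filtered by $\aleph_k$-presentable weakly FP-projectives. Hence $\mathcal{C}\aleph_kP\text{-}proj\subseteq \mathsf{WfpProj}$. The aim is to show that over a $<\aleph_m$-coherent ring every weakly FP-projective module already lies in $\mathcal{C}\aleph_mP\text{-}proj$. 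Then $\mathsf{WfpProj}=\mathcal{C}\aleph_mP\text{-}proj$, and the theorem reduces to the statement about complexes with terms in $\mathcal{C}\aleph_mP\text{-}proj$.

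\textbf{Step 1 (identifying the classes).} I would prove $\mathsf{WfpProj}=\mathcal{C}\aleph_mP\text{-}proj$ when $R$ is $<\aleph_m$-coherent. Since $\bigl(\mathsf{WfpProj},\mathsf{SfpInj}\bigr)$ is cogenerated by a set, $\mathsf{WfpProj}$ is deconstructible: it is the class of summands of modules filtered by some set of weakly FP-projectives of bounded size. The task is to bring that bound down to $\aleph_m$-presentable pieces. The mechanism I would use is that strong FP-injectivity is detected by $\operatorname{Ext}^i(F,-)=0$ for finitely presented $F$ and all $i\geq 1$. Over a $<\aleph_m$-coherent ring, a finitely presented module admits a resolution by finitely generated projectives whose syzygies remain $<\aleph_m$-presented. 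So every weakly FP-projective module should be filtered by $\aleph_m$-presentable weakly FP-projectives, which I would show by a Hill-lemma style argument. This gives $\mathsf{SfpInj}=\mathcal{C}\aleph_mP\text{-}inj$.

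\textbf{Step 2 (acyclic complexes).} Let $X$ be an acyclic complex with terms in $\mathcal{C}\aleph_kP\text{-}proj$. I would show its cycles $Z_nX$ lie in $\mathsf{WfpProj}$ by induction on $k$.

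For the base case $k=0$, the generating modules are finitely presented, i.e.\ countably generated, weakly FP-projective modules. Here I would adapt the proof for flat/cotorsion and projective complexes, as in \cite[Corollary 4.9(ii)]{EK} and \cite{Bazzoni}. One shows $\operatorname{Ext}^1(Z_nX,S)=0$ for $S\in\mathsf{SfpInj}$ by computing it as $\operatorname{Hom}_{K(R)}(X,\Sigma^{?}S)$-type groups. One then uses that $S$ has a pure acyclic injective coresolution with strongly FP-injective cycles, to build a complex of strongly FP-injectives that is right orthogonal to $X$.

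For the inductive step, I would filter $X$ by subcomplexes $X_\alpha$ that are acyclic with terms in $\mathcal{C}\aleph_{k-1}P\text{-}proj$ and with quotients of the same type. This is possible because $\aleph_k$-presentable modules are unions of continuous $\aleph_{k-1}$-chains, and acyclicity can be preserved by a closure argument. Eklof's lemma applied to the cycles then finishes the step.

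\textbf{Main obstacle.} The hard point is this filtration step: choosing the subcomplexes so that they stay acyclic, have terms in the smaller class, and have quotients that are again in the class and acyclic. Controlling presentability of the cycles at each stage is exactly where the $<\aleph_m$-coherence, and the finiteness of $m$, would be used. Combining Steps 1 and 2 gives the theorem.
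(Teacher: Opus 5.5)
Your Step 1 is essentially the paper's reduction, but it is easier than you make it. Over a left $<\aleph_m$-coherent ring the syzygies in $\mathcal{T}$ can be chosen $\aleph_m$-presentable. (For $m\geq1$ the projectives in such a resolution are only $<\aleph_m$-generated, not finitely generated.) Hence $\mathcal{T}\subseteq\mathcal{C}\aleph_m\mathcal{P}\subseteq\mathsf{WfpProj}(R)$ up to isomorphism, the two right orthogonals coincide, and no Hill-lemma argument is needed.

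The genuine gap is the inductive step of Step 2, which is not merely hard but false. Suppose an acyclic $X$ had a filtration whose successive quotients have terms in $\mathcal{C}\aleph_{k-1}P\text{-}proj$. Then each $X_n$ would be filtered by modules of that class and so lie in it, by Eklof's lemma. Applied to the acyclic disk $D^0(M)$, this forces $\mathcal{C}\aleph_kP\text{-}proj=\mathcal{C}\aleph_{k-1}P\text{-}proj$. Iterating, $\mathsf{WfpProj}(R)=\mathcal{C}\aleph_mP\text{-}proj=\mathsf{fpProj}$ over every $<\aleph_m$-coherent ring, which happens only for left coherent rings.

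Concretely, take a countable non-coherent ring (Remark \ref{simson}(iii)); it is $<\aleph_1$-coherent. A finitely generated, non-finitely presented left ideal $I$ is countably presented and weakly FP-projective, being a first syzygy of $R/I$. It is not FP-projective, since finitely generated FP-projective modules are finitely presented. So $D^0(I)$ admits no filtration of the kind you want, already for $m=1$. Writing an $\aleph_k$-presentable module as a union of smaller pieces gives no control over whether the pieces or their quotients lie in the smaller class, and no closure argument can repair that. Note also that coherence plays no role in this step in the paper: Corollary \ref{wfpprojcycles} holds over every ring. The statement you are inducting on is true; only the mechanism fails.

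The missing idea is to work directly at level $m$, with purity replaced by $\aleph_m$-purity, instead of descending to $\aleph_0$.
\begin{itemize}
\item Finiteness of $m$ enters through Lemma \ref{alephmpresentable}: $\aleph_m$-presentable modules have weakly FP-projective dimension at most $m$. Hence for $S\in\mathsf{SfpInj}(R)$ the truncated coresolution $E^m\colon 0\to S^m\to I^m\to I^{m+1}\to\cdots$ is $\aleph_m$-pure acyclic (Example \ref{exampleofNmpureacyclic}).
\item A complex $X$ with terms in $\mathcal{C}\aleph_mP\text{-}proj$ is filtered by bounded above, generally non-acyclic, complexes of $\aleph_m$-presentable weakly FP-projective modules (Corollary \ref{Filtabove}).
\item These pieces consist of $\aleph_m$-pure projectives, so they map null-homotopically into $E^m$ (Theorem \ref{nullhomotopic}).
\item The Eklof lemma for $\operatorname{Ext}^1_{Ch(R)}(-,\Sigma^{-1}E^m)\cong\operatorname{Hom}_{K(R)}(-,E^m)$ then gives $\operatorname{Hom}_{K(R)}(X,E^m)=0$ (Theorem \ref{Main}), and likewise for all shifts.
\item Acyclicity of $X$ is used only at the end. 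Since $X$ is acyclic, $\operatorname{Hom}_R(X,I^m_\bullet)$ is acyclic. As $E^m$ is the cone of $S^m\to I^m_\bullet$, it follows that $\operatorname{Hom}_R(X,S^m)$ is acyclic.
\item This kills $\operatorname{Ext}^1_R(Z_{n-m}X,S^m)\cong\operatorname{Ext}^1_R(Z_nX,S)$, the isomorphism coming from an $m$-fold dimension shift along $X$ and along the coresolution.
\end{itemize}
Your base-case sketch is exactly the $m=0$ instance of this argument.
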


In this paper, we also study strongly FP-injective complexes and Cartan-Eilenberg strongly FP-injective complexes. There are two associated cotorsion pairs, namely $\bigl(\operatorname{dg}\mathsf{WfpProj}(R),\widetilde{\mathsf{SfpInj}}(R)\bigr)$ in the abelian exact structure and $\bigl(\operatorname{CE}(\mathsf{WfpProj}(R)),\operatorname{CE}(\mathsf{SfpInj}(R))\bigr)$ in the Cartan-Eilenberg exact structure. While the completeness of these pairs is known, we give a common generating set for both cases, proving functioriality of the completeness. The complexes in the class $\widetilde{\mathsf{SfpInj}}(R)$ are the ones called strongly FP-injective complexes whereas the complexes in the class $\operatorname{CE}(\mathsf{SfpInj}(R))$ are called Cartan-Eilenberg strongly FP-injective complexes. 

The paper is organized as follows: We begin with some preliminary notions required and then Section 3 is focused on the study of (Cartan-Eilenberg) strongly FP-injective complexes, with a small subsection exploring the dg-exact structure and dg-strongly FP-injective complexes. Sections 4 and 5 establish the classes $\widetilde{\mathsf{SfpInj}}(R)$ and $\operatorname{CE}(\mathsf{SfpInj}(R))$ as accessible categories, respectively, using the pseudopullback theorem. Section 6 deals with strongly FP-injective modules, characterizing $<\lambda$-coherent rings for regular cardinals $\lambda$ in terms of $\lambda$-directed colimits of strongly FP-injective modules. Finally, section 7 establishes the Theorem mentioned above. We note that over left $<\aleph_m$ coherent rings, for non-negative integers $m$, this means that acyclic complexes of weakly FP-projective modules belong to the classes $\operatorname{dg}\mathsf{WfpProj}(R)$ and $\operatorname{CE}(\mathsf{WfpProj}(R))$.

\section{Preliminaries} \noindent{\em Modules.} 
All modules considered in this paper will be left modules over an associative unital ring $R$. We denote by $R$-$\textrm{Mod}$ the category of all $R$-modules.

\vspace{0.1in}

\noindent{\em Complexes.} A complex 
$$(C,d) =\cdots\rightarrow  C_{n+1}  \xrightarrow{d_{n+1}} C_{n} \xrightarrow{d_{n}} C_{n-1}\rightarrow \cdots $$ will be denoted by $C$ instead of $(C,d)$. An $R$-module $M$ is viewed as the complex 
$$S^0(M)=\cdots\rightarrow  0  \rightarrow M \rightarrow  0\rightarrow \cdots $$
where $M$ is in the 0th degree. Similarly, we denote by $S^n(M)$ the complex with $M$ as its nth entry and zeros elsewhere. In addition, we denote by $D^n(M)$ the complex  
$$D^n(M)=\cdots\rightarrow  0  \rightarrow M \xrightarrow{id}  M\rightarrow 0\rightarrow \cdots $$
where M is in degrees n and (n+1).

Given a complex $C$ we let $Z(C)$ and $B(C)$ be the complexes of cycles and boundaries, respectively, so $H(C)=Z(C)/B(C)$ denotes the homology complex of $C$. We also denote by $\Sigma(C)$ the suspension of $C$. So $\Sigma(C)_n = C_{n-1}$ and $d_{\Sigma(C)} = -d_{C}$, $\Sigma^k(C)$ is defined  by $\Sigma^k(C)_n=C_{n-k}$ and $d_{\Sigma^k(C)} = (-1)^kd_{C}$ for all $k\in\mathsf{Z}$.

For two complexes $C$ and $D$ we denote by $\operatorname{Hom}_{Ch(R)}(C,D)$ the abelian group of chain maps from $C$ to $D$ and by $\operatorname{Ext}^{i}(C,D)$ the right derived functors of $\operatorname{Hom}_{Ch(R)}$. The Hom-complex of $C,D$ is denoted by $\operatorname{Hom}_R(C,D)$.  Following \cite{GR}, for two complexes $C, D$, we define  $\underline{\operatorname{Hom}}(C,D) = Z(\operatorname{Hom}_R(C,D))$, $\underline{\operatorname{Hom}}(C,D)$ can be made into a complex with $\underline{\operatorname{Hom}}(C,D)_m$ being
the abelian group of morphisms from $C$ to $\Sigma^{-m}(D)$ and with a boundary operator given by: 
if $f \in \underline{\operatorname{Hom}}(C,D)_m$, then $d_m(f) \colon C \to \Sigma^{-(m-1)}(D)$ with 
$d_m(f)_n = (-1)^m d^D \circ f_n, \forall n \in \mathbb{Z}$. We also set for a complex $C$, $C^+=\underline{\operatorname{Hom}}(C,D^0(\mathbb{Q} / \mathbb{Z}))$.

\begin{Definition} Let $\mathcal{X}$ be a class of modules. By $CE(\mathcal{X})$ we denote the class of all complexes $C$ such that $C,Z(C),B(C)$ and $H(C)$ are complexes of modules belonging to $\mathcal{X}$.
    
\end{Definition}

To declare that a complex $C$ belongs to the class $CE(\mathcal{X})$ we will say that $C$ is a $CE$-$\mathcal{X}$ complex instead of writing $C\in CE(\mathcal{X})$. For example, if $Proj$ denotes the class of all projective modules, the complexes in $CE(Proj)$ are called $CE$-projective complexes. The same applies to the classes $Inj$ and $Flat$ of injective and flat modules.

\begin{Proposition}(\cite[Section 6]{En}) The functor $\operatorname{Hom}(-,-)$ on $Ch(R)\times Ch(R)$ is right balanced by $CE(Proj)\times CE(Inj)$.
    
\end{Proposition}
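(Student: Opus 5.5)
We follow Enochs' framework for relative homological algebra: right balance means that the CE-projective resolutions of the first variable and the CE-injective coresolutions of the second variable compute the same derived functors. The classical criterion is this. It suffices to show that every complex $C$ has a $\operatorname{Hom}(-,CE(Inj))$-exact and $\operatorname{Hom}(CE(Proj),-)$-exact left $CE(Proj)$-resolution, and dually a right $CE(Inj)$-coresolution with the same exactness properties. The bicomplex argument (as in Enochs--Jenda, Theorem 8.2.14) then identifies the two derived functors.

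The first step is to describe CE-projectives concretely. A complex $P$ is CE-projective if and only if it is isomorphic to a direct sum
\[
\bigoplus_n D^n(P_n')\oplus \bigoplus_n S^n(P_n'')
\]
with $P_n',P_n''$ projective. This follows because $B$ and $H$ being projective make the sequences $0\to Z_n\to P_n\to B_{n-1}\to 0$ and $0\to B_n\to Z_n\to H_n\to 0$ split. Dually, CE-injectives are sums, equivalently products, of $D^n(E)$ and $S^n(E)$ with $E$ injective.

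Next we compute Hom out of the building blocks. For a complex $X$,
\[
\operatorname{Hom}(D^n(P),X)\cong\operatorname{Hom}_R(P,X_n)
\quad\text{and}\quad
\operatorname{Hom}(S^n(P),X)\cong \operatorname{Hom}_R(P,Z_n(X)).
\]
Hence a sequence $X'\to X\to X''$ is $\operatorname{Hom}(CE(Proj),-)$-exact exactly when it induces exact sequences on the modules $X_n$ and on the cycles $Z_n$. Equivalently, in degreewise exact situations, it stays exact on $Z$, $B$, $H$ and $C/B$; this is the CE-exact structure. The dual computation, using
\[
\operatorname{Hom}(X,D^n(E))\cong\operatorname{Hom}_R(X_{n-1},E)
\]
or the analogous shifted version, and $\operatorname{Hom}(X,S^n(E))\cong\operatorname{Hom}_R(X_n/B_n,E)$, shows that $\operatorname{Hom}(-,CE(Inj))$-exactness is the same CE-exactness condition. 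So both exactness requirements reduce to a single one: the resolutions must be CE-exact.

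Then we construct the resolutions. Given $C$, choose projective precovers $P_n\to B_n(C)$ and $Q_n\to H_n(C)$. Lift them to a surjection
\[
\bigoplus_n D^n(\cdot)\oplus\bigoplus_n S^n(\cdot)\to C
\]
that is surjective on $C$, $Z$, $B$ and $H$. Concretely, use the horseshoe-type construction: a projective epi onto $B_n$ combined with one onto $H_n$ gives one onto $Z_n$, and then onto $C_n$. The kernel is again a complex, and iterating yields a CE-exact CE-projective resolution. The CE-injective coresolution is built dually from injective envelopes of $B_n$, $H_n$ and $C_n/B_n$.

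The main obstacle is verifying the lifting step: the constructed map must be surjective on all four of $C$, $Z$, $B$, $H$, and must not merely be degreewise. Once we have the Hom characterization above, this is a diagram chase with the two short exact sequences. The balance then follows formally from the comparison theorem in the CE-exact category.
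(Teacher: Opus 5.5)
Your proposal is correct and is essentially the argument of \cite[Section 6]{En}; the paper gives no proof of its own and simply cites it. In both, the disk and sphere adjunctions identify $\operatorname{Hom}(CE(Proj),-)$-exactness and $\operatorname{Hom}(-,CE(Inj))$-exactness with CE-exactness, CE-exact resolutions by sums of disks and spheres are built from projective epimorphisms onto $B_n$ and $H_n$ (dually, injective monomorphisms), and the Enochs--Jenda balance criterion concludes. The only difference is cosmetic indexing: with the paper's convention ($D^n(M)$ in degrees $n$ and $n+1$) your formulas read $\operatorname{Hom}(D^n(P),X)\cong\operatorname{Hom}_R(P,X_{n+1})$ and $\operatorname{Hom}(X,D^n(E))\cong\operatorname{Hom}_R(X_n,E)$, which changes nothing.
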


This result says that we can compute derived functors of
$\operatorname{Hom}(-,-)$ using either $CE$ projective resolutions or $CE$ injective resolutions. For given $C$ and $D$, we will denote these derived functors applied to $\operatorname{Hom}(C,D)$ as $\overline{\operatorname{Ext}}^n(C,D)$.

    

\section{Exact structures on the category of complexes}

Strong FP-injectivity gives rise to several classes of complexes,
depending on the exact structure we are working in. In the ordinary
abelian exact structure, strongly FP-injective complexes are acyclic
and have strongly FP-injective cycles. In the Cartan-Eilenberg exact
structure, nonzero homology is allowed, and the corresponding condition
is expressed in terms of components and cycles.

We first establish these characterizations and describe the associated
cotorsion pairs and approximation sequences. A common set of sphere
and disk complexes will make the difference between ordinary and
Cartan-Eilenberg orthogonality explicit. We then turn to the exact
structure induced by dg-projective and dg-injective complexes. Although
this exact category need not be efficient, the ordinary induced
cotorsion pairs still provide complete hereditary cotorsion pairs in
this structure.

We recall a few notions on exact categories needed below. 
A cotorsion pair $(\mathcal L,\mathcal R)$ in an exact category is a pair satisfying $\mathcal L={}^{\perp_1}\mathcal R$ and
$\mathcal R=\mathcal L^{\perp_1}$, with Ext computed in that exact
structure. It is complete if every object $X$ admits conflations
$0\to R_X\to L_X\to X\to0$ and
$0\to X\to R^X\to L^X\to0$, with the indicated terms in
$\mathcal L$ and $\mathcal R$. These give special precovers and special
preenvelopes, respectively. It is functorially complete if the two
sequences can be chosen functorially in $X$, and hereditary if all
positive Ext groups between its left and right classes vanish.
A cotorsion pair is cogenerated by a set $\mathcal S$ if its right
class is $\mathcal S^{\perp_1}$. Specifying such a set gives explicit
Ext tests for membership in the right class. Under the hypotheses of
\cite[Theorem 5.16]{StovicekExact}, it also yields functorial
completeness and a description of the left class as retracts of
$\mathcal S$-filtered objects.

  \subsection{Strongly FP-injective complexes in the abelian exact structure}

A module $M$ is strongly FP-injective if $\operatorname{Ext}^i_R(F,M)=0$
for every finitely presented module $F$ and every $i\geq1$; equivalently,
all the cosyzygies in an injective resolution of $M$, including $M$ itself,
are FP-injective \cite{LGO}. We denote the class of strongly
FP-injective modules by $\mathsf{SfpInj}(R)$ and the class of weakly
FP-projective modules by
\[
\mathsf{WfpProj}(R)={}^{\perp_1}\mathsf{SfpInj}(R).
\]
We begin with the ordinary abelian exact structure. Throughout this subsection, Ext is computed in the ordinary abelian
category of modules or complexes, as appropriate.

\begin{Definition}
A complex is finitely presented if it is bounded and all its components
are finitely presented modules. A complex $C$ is FP-injective if
$\operatorname{Ext}^1_{\operatorname{Ch}(R)}(F,C)=0$ for every finitely
presented complex $F$. It is strongly FP-injective if $\operatorname{Ext}^i_{\operatorname{Ch}(R)}(F,C)=0$ for every finitely presented complex $F$ and every $i\geq1$.
\end{Definition}

\begin{Theorem}\label{Theor}
For a complex $C$, the following conditions are equivalent:
\begin{enumerate}[label=(\roman*)]
\item $C$ is strongly FP-injective.
\item $C_n\in\mathsf{SfpInj}(R)$ for every $n$, and $\operatorname{Hom}_R(F,C)$ is acyclic for
every finitely presented complex $F$.
\item $C$ is a pure acyclic complex of strongly FP-injective modules.
\end{enumerate}
\end{Theorem}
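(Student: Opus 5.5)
The plan is to prove (i)$\Rightarrow$(ii)$\Rightarrow$(iii)$\Rightarrow$(i). The main tool is the standard identification
\[
\operatorname{Ext}^i_{\operatorname{Ch}(R)}(D^n(M),C)\cong\operatorname{Ext}^i_R(M,C_n)
\]
together with the fact that, when the components of $C$ are suitably acyclic for $\operatorname{Hom}_R(M,-)$, Ext against spheres is computed by homology of the Hom-complex:
\[
\operatorname{Ext}^1_{\operatorname{Ch}(R)}(S^n(M),C)\cong H_{n-1}\operatorname{Hom}_R(M,C)\quad\text{up to sign conventions.}
\]
Both rely on the adjunction between $D^n(-)$ and the evaluation functor $C\mapsto C_n$ (left/right adjoint, both exact). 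They also use the degreewise split short exact sequence $0\to S^{n}(M)\to D^{n}(M)\to S^{n+1}(M)\to0$.

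For (i)$\Rightarrow$(ii), note that $D^n(F)$ is a finitely presented complex for every finitely presented module $F$. Hence $\operatorname{Ext}^i_R(F,C_n)\cong\operatorname{Ext}^i_{\operatorname{Ch}(R)}(D^n(F),C)=0$ for all $i\ge1$, so each $C_n$ is strongly FP-injective. For acyclicity of $\operatorname{Hom}_R(F,C)$ with $F$ a finitely presented complex, use $H_n\operatorname{Hom}_R(F,C)\cong\operatorname{Hom}_{K(R)}(F,\Sigma^{-n}C)$. Every degreewise split extension of $\Sigma^{-n-1}$-shifts is governed by $\operatorname{Ext}^1_{dw}(F,\Sigma^{-n-1}C)\cong H_{n}\operatorname{Hom}_R(F,C)$, and $\operatorname{Ext}^1_{dw}\subseteq\operatorname{Ext}^1_{\operatorname{Ch}(R)}$. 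Since suspensions of finitely presented complexes are finitely presented, every homotopy class vanishes.

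For (ii)$\Rightarrow$(iii), apply (ii) to $F=S^n(R)$ and obtain $H\operatorname{Hom}_R(S^n(R),C)=H(C)$ up to shift, so $C$ is acyclic. For a finitely presented module $M$, apply (ii) to $F=S^n(M)$: $\operatorname{Hom}_R(M,C)$ is acyclic. Acyclicity of $\operatorname{Hom}_R(M,C)$ for all finitely presented $M$ is precisely purity of the short exact sequences $0\to Z_nC\to C_n\to Z_{n-1}C\to0$, i.e.\ pure acyclicity.

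For (iii)$\Rightarrow$(i), first show the cycles are strongly FP-injective. Since $C$ is pure acyclic, each $Z_nC$ is a pure submodule of the FP-injective module $C_n$, hence FP-injective. Then $\operatorname{Ext}^1_R(M,Z_nC)$ vanishes by purity, and dimension shifting along $0\to Z_n\to C_n\to Z_{n-1}\to0$ gives $\operatorname{Ext}^{i+1}_R(M,Z_n)\cong\operatorname{Ext}^i_R(M,Z_{n-1})$ for $i\ge1$. The $\operatorname{Ext}^1$ level is handled by purity, since $\operatorname{Hom}(M,C_n)\to\operatorname{Hom}(M,Z_{n-1})$ is onto and $\operatorname{Ext}^1(M,C_n)=0$, so $\operatorname{Ext}^2(M,Z_n)=0$. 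Inductively, $\operatorname{Ext}^{i}(M,Z_n)\cong\operatorname{Ext}^{i-1}(M,Z_{n-1})$ and the vanishing propagates down to $\operatorname{Ext}^1$, which is zero.

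Thus $C$ is an acyclic complex with strongly FP-injective cycles. Now show $\operatorname{Ext}^i_{\operatorname{Ch}(R)}(F,C)=0$ for a finitely presented (bounded) complex $F$. A bounded finitely presented complex has a finite filtration by spheres $S^n(M)$ with $M$ finitely presented, via brutal truncations, so it suffices to treat $F=S^n(M)$. Use $0\to S^{n-1}(M)\to D^{n-1}(M)\to S^n(M)\to 0$, giving $\operatorname{Ext}^{i}(S^n(M),C)\cong\operatorname{Ext}^{i-1}(S^{n-1}(M),C)$ for $i\ge2$, because $\operatorname{Ext}^{\ge1}(D^{n-1}(M),C)=\operatorname{Ext}^{\ge1}_R(M,C_{n-1})=0$. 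This reduces everything to $\operatorname{Ext}^1(S^n(M),C)$. By the degreewise splitting (Ext$^1$ of $C_n$-components vanishes), this group equals $H\operatorname{Hom}_R(M,C)$ in the appropriate degree, which is zero by pure acyclicity.

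I expect the main obstacle to be justifying cleanly that $\operatorname{Ext}^1_{\operatorname{Ch}(R)}(S^n(M),C)$ is computed by homotopy classes, i.e.\ equals $\operatorname{Ext}^1_{dw}$, when the components of $C$ are strongly FP-injective. The point is that any extension of $S^n(M)$ by $C$ is degreewise an extension of $M$ by $C_n$, which splits as $\operatorname{Ext}^1_R(M,C_n)=0$. The same identification is used in (i)$\Rightarrow$(ii), so I would isolate it as a preliminary lemma.
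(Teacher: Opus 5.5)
Your proof is correct, but it takes a different route from the paper's.

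The paper works through an injective resolution of $C$ in $\operatorname{Ch}(R)$. For (i)$\Rightarrow$(ii) it evaluates that resolution in degree $n$, using that the FP-injective cosyzygies have FP-injective components. For (ii)$\Rightarrow$(iii) it simply quotes \cite[Proposition 2.8]{E}. For (iii)$\Rightarrow$(i) it shows inductively that every cosyzygy is again a pure acyclic complex of FP-injective modules. This uses closure of $\mathsf{SfpInj}(R)$ under cokernels of monomorphisms and a tensoring argument based on contractibility of the injective complexes. Each cosyzygy is therefore FP-injective by \cite[Theorem 2.10]{WL}, and dimension shifting finishes.

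You instead test directly against disks and spheres:
\begin{enumerate}[label=(\alph*)]
\item $D^n(F)$ detects the components.
\item The sequence $0\to S^{n-1}(M)\to D^{n-1}(M)\to S^n(M)\to 0$ shifts the higher sphere Ext groups down to $\operatorname{Ext}^1$.
\item Degreewise splitting identifies that $\operatorname{Ext}^1$ with a homology group of $\operatorname{Hom}_R(M,C)$, whose vanishing is exactly the purity condition.
\item Brutal truncation reduces an arbitrary bounded $F$ to spheres.
\end{enumerate}
This is the mechanism of Lemma~\ref{ordinary-tests}. It gives a self-contained proof that in effect reproves the needed half of \cite[Theorem 2.10]{WL}. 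It also replaces the citation of \cite{E} by a short direct computation. The paper's version is shorter given the literature, but it is opaque without it.

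Two minor points about (iii)$\Rightarrow$(i):
\begin{enumerate}[label=(\alph*)]
\item Your first paragraph there, showing the cycles lie in $\mathsf{SfpInj}(R)$, is never used afterwards. The sphere computation only needs $C_n\in\mathsf{SfpInj}(R)$ and pure acyclicity. It does, however, give Corollary~\ref{sfi-cycles} for free.
\item In that paragraph the justification is slightly misattributed. Surjectivity of $\operatorname{Hom}_R(M,C_n)\to\operatorname{Hom}_R(M,Z_{n-1}C)$, together with $\operatorname{Ext}^1_R(M,C_n)=0$, yields $\operatorname{Ext}^1_R(M,Z_nC)=0$. The vanishing of $\operatorname{Ext}^2_R(M,Z_nC)$ instead comes from $\operatorname{Ext}^1_R(M,Z_{n-1}C)=0=\operatorname{Ext}^2_R(M,C_n)$.
\end{enumerate}
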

\begin{proof}
Recall that an FP-injective complex is precisely a pure acyclic complex
of FP-injective modules; equivalently, it is an acyclic complex with
FP-injective cycles \cite[Theorem 2.10]{WL}. Suppose (i) holds. Every cosyzygy of an injective resolution of $C$ in
$\operatorname{Ch}(R)$ is FP-injective, by dimension shifting. Injective
objects of $\operatorname{Ch}(R)$ are contractible complexes with injective
cycles, and their components are injective modules. Evaluating the
resolution in degree $n$ therefore gives an injective resolution of $C_n$
whose cosyzygies are FP-injective. Thus $C_n\in\mathsf{SfpInj}(R)$. The asserted
Hom-acyclicity follows from the FP-injectivity of $C$, proving (ii).

The implication (ii)$\Rightarrow$(iii) is given by the equivalence of (i) and (v) in {\rm(\cite[Proposition 2.8]{E})} and the fact that any chain map from any finitely presented complex $F$ to $C$ being null-homotopic means exactly that $\operatorname{Hom}_R(F,C)$ is exact for any finitely presented complex $F$.

For (iii)$\Rightarrow$(i), let 
\[0\rightarrow C \rightarrow I^0 \rightarrow I^1\rightarrow\cdots\] 
be an injective resolution of $C$, and consider the short exact
sequences
\[
0\longrightarrow J^k\longrightarrow I^k\longrightarrow J^{k+1}
\longrightarrow0\qquad(k\geq0),\qquad J^0=C.
\]
The class $\mathsf{SfpInj}(R)$ is closed under cokernels of monomorphisms and thus, inductively, every $J^k$ has components in $\mathsf{SfpInj}(R)$. Since these components are
FP-injective, each displayed sequence is degreewise pure. The complexes
$I^k$ are contractible. Tensoring the displayed sequences degreewise with
an arbitrary right module, and using the homology long exact sequence,
shows inductively that every $J^k$ is pure acyclic. Hence every $J^k$ is
FP-injective as an object of $\operatorname{Ch}(R)$. Dimension shifting
now proves (i).
\end{proof}
For the approximation results below, it is useful to express the
preceding characterization entirely in terms of cycles. In this spirit, we have the following; compare with \cite[Theorem 2.10]{WL}.
\begin{Corollary}\label{sfi-cycles}
A complex $C$ is strongly FP-injective if and only if it is acyclic and
$Z_nC\in\mathsf{SfpInj}(R)$ for every $n\in\mathbb Z$.
\end{Corollary}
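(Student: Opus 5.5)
We use Theorem \ref{Theor}, specifically the equivalence (i)$\Leftrightarrow$(iii): $C$ is strongly FP-injective if and only if $C$ is a pure acyclic complex of strongly FP-injective modules. We also use \cite[Theorem 2.10]{WL}: a complex is pure acyclic with FP-injective components if and only if it is acyclic with FP-injective cycles. The remaining work is to pass between conditions on components and conditions on cycles within $\mathsf{SfpInj}(R)$. Two standard closure properties of $\mathsf{SfpInj}(R)$ do this. The class is closed under extensions, since it is the right class of a cotorsion pair. It is closed under cokernels of monomorphisms, since the pair $\bigl(\mathsf{WfpProj},\mathsf{SfpInj}\bigr)$ is hereditary; this was already used in the proof of Theorem \ref{Theor}.

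\emph{Forward direction.} Let $C$ be strongly FP-injective. By (iii), $C$ is pure acyclic with components in $\mathsf{SfpInj}(R)$. For each $n$ we have the pure short exact sequence
\[
0\to Z_nC\to C_n\to Z_{n-1}C\to 0 .
\]
Since $C_n$ is FP-injective, \cite[Theorem 2.10]{WL} shows that every $Z_nC$ is FP-injective. Strong FP-injectivity needs more than this, so we argue as follows. Fix a finitely presented module $F$ and apply $\operatorname{Hom}_R(F,-)$ to the sequences above. Because they are pure, this gives short exact sequences of abelian groups, so the connecting maps $\operatorname{Hom}(F,Z_{n-1}C)\to \operatorname{Ext}^1(F,Z_nC)$ vanish. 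We also have $\operatorname{Ext}^i(F,C_n)=0$ for $i\ge 1$. The long exact sequence then yields
\[
\operatorname{Ext}^{i}(F,Z_{n-1}C)\cong \operatorname{Ext}^{i+1}(F,Z_nC)\quad\text{for } i\ge1,
\]
and $\operatorname{Ext}^1(F,Z_nC)=0$ for all $n$. By induction on $i$, starting from $\operatorname{Ext}^1=0$ at every index, we get $\operatorname{Ext}^{i+1}(F,Z_nC)\cong\operatorname{Ext}^i(F,Z_{n-1}C)=0$. Hence $Z_nC\in\mathsf{SfpInj}(R)$ for every $n$.

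\emph{Converse.} Let $C$ be acyclic with $Z_nC\in\mathsf{SfpInj}(R)$ for all $n$. Strongly FP-injective modules are FP-injective, so $C$ is acyclic with FP-injective cycles, and by \cite[Theorem 2.10]{WL} it is pure acyclic. Each $C_n$ is an extension of $Z_{n-1}C$ by $Z_nC$, so $C_n\in\mathsf{SfpInj}(R)$ by closure under extensions. Theorem \ref{Theor}(iii)$\Rightarrow$(i) then shows that $C$ is strongly FP-injective.

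The only step that needs care is the forward induction. The cycles being FP-injective does not by itself give strong FP-injectivity, and the inductive argument relies on purity to kill the connecting maps. An alternative is to note that $Z_{n-1}C$ is the cokernel of the monomorphism $Z_nC\to C_n$. This route still requires knowing that $Z_nC$ is strongly FP-injective first, so the Ext-shifting argument above is the one to use.
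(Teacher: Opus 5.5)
Your proposal is correct and follows essentially the same route as the paper. In the forward direction you get $C_n\in\mathsf{SfpInj}(R)$ from Theorem~\ref{Theor}, obtain the vanishing of $\operatorname{Ext}^1_R(F,Z_nC)$ from purity (equivalently, from FP-injectivity of the cycles), and then dimension-shift along $0\to Z_nC\to C_n\to Z_{n-1}C\to 0$. In the converse you use extension closure of $\mathsf{SfpInj}(R)$ and the purity of these sequences before invoking Theorem~\ref{Theor}, exactly as the paper does.
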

\begin{proof}
If $C$ is strongly FP-injective, Theorem~\ref{Theor} shows that its
components belong to $\mathsf{SfpInj}(R)$ and that its cycles are FP-injective.
For a finitely presented module $F$, the sequences
$0\to Z_nC\to C_n\to Z_{n-1}C\to0$ give
\[
\operatorname{Ext}^i_R(F,Z_nC)
\cong\operatorname{Ext}^{i-1}_R(F,Z_{n-1}C)\qquad(i\geq2).
\]
Iterating reduces to a vanishing first Ext group, so $Z_nC\in\mathsf{SfpInj}(R)$.
Conversely, if the cycles belong to $\mathsf{SfpInj}(R)$, extension closure gives
$C_n\in\mathsf{SfpInj}(R)$. The same short exact sequences are pure because
$Z_nC$ is FP-injective. Thus $C$ is pure acyclic, and
Theorem~\ref{Theor} applies.
\end{proof}

The distinction from FP-injectivity is already visible on disk
complexes. If $R$ is not left coherent, there is an FP-injective module
$M$ which is not strongly FP-injective \cite[Theorem 4.2]{LGO}. Then
$D^0(M)$ has the same distinction in $\operatorname{Ch}(R)$.
The next theorem shows that left coherence is exactly the condition
under which the two classes of complexes coincide, and records the
corresponding colimit and character-duality criteria.

\begin{Theorem}\label{theorem1}
The following conditions are equivalent:
\begin{enumerate}[label=(\roman*)]
\item $R$ is left coherent.
\item FP-injective complexes and strongly FP-injective complexes coincide.
\item Every directed colimit of strongly FP-injective complexes is
FP-injective.
\item For every complex $C$ of left modules, $C$ is strongly FP-injective
if and only if $C^+$ is a flat complex of right modules.
\item For every complex $C$ of left modules, $C$ is strongly FP-injective
if and only if $C^{++}$ is strongly FP-injective.
\end{enumerate}
Under these equivalent conditions, the colimits in (iii) are strongly
FP-injective.
\end{Theorem}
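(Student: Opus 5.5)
The plan is to reduce every condition to statements about modules, using Corollary~\ref{sfi-cycles} together with the known module-level facts. These are: FP-injective complexes are exactly the acyclic complexes with FP-injective cycles \cite[Theorem 2.10]{WL}, and $\mathsf{SfpInj}(R)=\mathsf{fpInj}(R)$ holds exactly when $R$ is left coherent \cite{LGO}. I will prove (i)$\Leftrightarrow$(ii), (i)$\Rightarrow$(iii)$\Rightarrow$(i), and (i)$\Leftrightarrow$(iv)$\Leftrightarrow$(v).

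For (i)$\Rightarrow$(ii): by Corollary~\ref{sfi-cycles} and \cite[Theorem 2.10]{WL}, both classes are acyclic complexes whose cycles lie in $\mathsf{SfpInj}(R)$, respectively $\mathsf{fpInj}(R)$. These classes agree over a left coherent ring. For (ii)$\Rightarrow$(i): take an FP-injective module $M$. The disk $D^0(M)$ is acyclic with cycles $M$ and $0$, so it is an FP-injective complex. By (ii) it is then strongly FP-injective, so $M\in\mathsf{SfpInj}(R)$ by Corollary~\ref{sfi-cycles}. Hence the two module classes coincide, and $R$ is left coherent by \cite[Theorem 4.2]{LGO}; this is exactly the example mentioned before the theorem.

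For (i)$\Rightarrow$(iii): over a left coherent ring, $\mathsf{fpInj}(R)$ is closed under directed colimits. Directed colimits in $\operatorname{Ch}(R)$ are exact and commute with taking cycles, so a directed colimit of acyclic complexes with FP-injective cycles is again of that form. This gives (iii), and by (ii) also the final sentence of the theorem. For (iii)$\Rightarrow$(i): every injective module is strongly FP-injective, so $D^0(E)$ is strongly FP-injective for injective $E$. Directed colimits of injective modules $E_i$ then give FP-injective complexes $D^0(\varinjlim E_i)$, so every directed colimit of injective modules is FP-injective. This is a standard characterization of left coherence (Stenström, or Megibben).

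For (iv) and (v) I use character duality. $C^+$ is computed degreewise as $(C_{-n})^+$, and it is acyclic with cycles $(Z C)^+$ up to shift when $C$ is acyclic. A flat complex of right modules is an acyclic complex with flat cycles. So the condition that $C^+$ is flat says exactly that $C$ is acyclic with cycles whose character modules are flat, i.e.\ with FP-injective cycles. In other words, $C^+$ is flat if and only if $C$ is an FP-injective complex, over any ring. Therefore (iv) says precisely that FP-injective equals strongly FP-injective, which is (ii). For (v), I use that $M$ is FP-injective iff $M^{++}$ is, and that $M^{++}$ is pure-injective. Over a coherent ring, $C^{++}$ is then acyclic with cycles $(Z C)^{++}$, which are FP-injective exactly when $Z C$ is; this gives (i)$\Rightarrow$(v). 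Conversely, assume (v) and let $M$ be FP-injective. Then $M^{++}$ is pure-injective and FP-injective, hence injective (a pure-injective FP-injective module is injective). So $D^0(M)^{++}=D^0(M^{++})$ is strongly FP-injective, and (v) forces $D^0(M)$, hence $M$, to be strongly FP-injective. This yields (ii) at the module level, and thus (i).

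The main obstacle I expect is the bookkeeping in (iv)/(v): checking that $C^+$ really has cycles $(Z_nC)^+$ with the right indexing, and that the ``only if'' directions do not silently require acyclicity that must first be extracted from flatness of $C^+$. Exactness of $(-)^+$ handles the latter, since $C^+$ acyclic implies $C$ acyclic by faithful exactness of $\operatorname{Hom}_{\mathbb Z}(-,\mathbb Q/\mathbb Z)$.
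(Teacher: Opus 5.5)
Your arguments for (i)$\Leftrightarrow$(ii), (i)$\Rightarrow$(iii)$\Rightarrow$(i), and the forward implications (i)$\Rightarrow$(iv) and (i)$\Rightarrow$(v) are fine and essentially match the paper. The paper cites \cite[Corollary 2.14]{WL} for the last two, where you compute by hand under coherence. The gap is in the reverse implications (iv)$\Rightarrow$(i) and (v)$\Rightarrow$(i).

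You claim that $C^+$ is flat if and only if $C$ is an FP-injective complex ``over any ring''. This rests on $M^+$ flat $\Leftrightarrow$ $M$ FP-injective, but in general only $M^+$ flat $\Rightarrow$ $M$ FP-injective holds, via the pure embedding $M\to M^{++}$ into the injective module $M^{++}$. The converse for all $M$ is itself equivalent to left coherence. To see this, let $R^+$ be the character module of $R_R$, which is an injective left module, and put $E=(R^+)^{(I)}$. Then $E$ is FP-injective and $E^+\cong(R^{++})^I$ contains $R^I$ as a pure submodule. So if every such $E^+$ were flat, every $R^I$ would be flat, and Chase's theorem would make $R$ left coherent. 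Hence over a non-coherent ring some $D^0(E)$ is an FP-injective complex with non-flat character complex. Your reduction ``(iv) says precisely (ii)'' therefore assumes what is to be proved.

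The same problem occurs in (v)$\Rightarrow$(i). Since $M^{++}$ is pure-injective, it is FP-injective iff it is injective iff $M^+$ is flat. So the step ``$M$ FP-injective, hence $M^{++}$ FP-injective'' is again only available over left coherent rings. The bookkeeping issues you flagged as the main obstacle are harmless; this is the real one.

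The missing idea is to extract the needed duality from the hypothesis itself, using injective modules and pure embeddings, as the paper does. An injective module $I$ is strongly FP-injective, so $D^0(I)$ is a strongly FP-injective complex. Applying (iv) to $D^0(I)$ gives $I^+$ flat; applying (v) gives $I^{++}\in\mathsf{SfpInj}(R)$.

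Now let $M$ be FP-injective and embed it in an injective $I$. This embedding is pure, so the dual sequence splits. Thus $M^+$ is a direct summand of $I^+$, and $M^{++}$ is a direct summand of $I^{++}$. Hence $D^0(M)^+$ is flat, respectively $D^0(M)^{++}$ is strongly FP-injective. The reverse direction of (iv), respectively (v), applied to $D^0(M)$ then gives $M\in\mathsf{SfpInj}(R)$, and \cite[Theorem 4.2]{LGO} yields left coherence. With this replacement the rest of your plan goes through.
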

\begin{proof}
Over a left coherent ring, FP-injective modules and strongly FP-injective
modules coincide by \cite[Theorem 4.2]{LGO}. Thus (i)$\Rightarrow$(ii)
follows from Corollary~\ref{sfi-cycles} and
\cite[Theorem 2.10]{WL}. 

Let $M$ be an FP-injective module. Then $D^0(M)$ is an FP-injective complex, so by (ii) it is a strongly FP-injective complex, meaning that $M$ is a strongly FP-injective module and therefore $R$ is left coherent. Hence, we get (ii)$\Rightarrow$(i).

Under (i), directed colimits of modules in $\mathsf{SfpInj}(R)$ remain in
$\mathsf{SfpInj}(R)$. Directed colimits are exact in $R$-Mod and commute with
cycles. Corollary~\ref{sfi-cycles} therefore proves (iii), with the
stronger conclusion stated. Conversely, disks commute with directed
colimits, so (iii) implies the module-theoretic colimit criterion for
left coherence in \cite[Theorem 4.2]{LGO}.

Under (i), condition (ii) and \cite[Corollary 2.14]{WL} give (iv) and
(v). For the reverse implications, test (iv) or (v) on $D^0(M)$ for
every left module $M$. The character complex of a disk is a disk of
$M^+=\operatorname{Hom}_{\mathbb Z}(M,\mathbb Q/\mathbb Z)$, up to
the indexing shift, and its double character complex is a disk of
$M^{++}$. A disk is flat, respectively strongly FP-injective, precisely
when its defining module is so. We obtain respectively the criteria
$M\in\mathsf{SfpInj}(R)\Longleftrightarrow M^+\text{ is flat}$ and
$M\in\mathsf{SfpInj}(R)\Longleftrightarrow M^{++}\in\mathsf{SfpInj}(R)$ for every $M$.
Either criterion forces every FP-injective module to belong to
$\mathsf{SfpInj}(R)$. Indeed, let $M$ be FP-injective and embed it purely in
an injective module $I$. Dualizing a pure exact sequence splits it, so
$M^+$ is a direct summand of $I^+$ and $M^{++}$ is a direct summand
of $I^{++}$. Under the first criterion, $I^+$ is flat, hence so is
$M^+$, and then $M\in\mathsf{SfpInj}(R)$. Under the second, $I^{++}$ belongs
to $\mathsf{SfpInj}(R)$, hence so does $M^{++}$, and again $M\in\mathsf{SfpInj}(R)$.
The equality of FP-injectives and strongly FP-injectives implies left
coherence again by \cite[Theorem 4.2]{LGO}.
\end{proof}

We now turn to approximation sequences. The starting point is the cotorsion pair formed by weakly FP-projective and strongly FP-injective
modules. We record its cogenerating set explicitly, since its spheres
and disks will be used in both the ordinary and the Cartan-Eilenberg
exact structures.

The following result is established in \cite[Theorem 3.4]{LGO}, but we include a proof for the convenience of the reader.
\begin{Proposition}\label{module-pair}
Let $\mathcal F$ be a set of representatives of the finitely presented
left modules, including $R$. Choose a projective resolution of each
$F\in\mathcal F$ and put
\[
\mathcal T=\{\Omega^jF:F\in\mathcal F,\ j\geq0\},\qquad\Omega^0F=F.
\]
Then $\mathsf{SfpInj}(R)=\mathcal T^{\perp_1}$, and
$(\mathsf{WfpProj}(R),\mathsf{SfpInj}(R))$ is a functorially complete
hereditary cotorsion pair.
\end{Proposition}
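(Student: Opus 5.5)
The plan is to first identify $\mathcal T^{\perp_1}$ with $\mathsf{SfpInj}(R)$ by dimension shifting, and then obtain the cotorsion pair, its functorial completeness and heredity from the Eklof--Trlifaj machinery for a cotorsion pair cogenerated by a set.

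\emph{Step 1: $\mathcal T^{\perp_1}=\mathsf{SfpInj}(R)$.} For each $F\in\mathcal F$ the chosen projective resolution gives short exact sequences $0\to\Omega^{j+1}F\to P_j\to\Omega^jF\to0$. Applying $\operatorname{Hom}_R(-,M)$ yields
\[
\operatorname{Ext}^1_R(\Omega^{j}F,M)\cong\operatorname{Ext}^{j+1}_R(F,M)\qquad(j\geq0),
\]
since the $P_j$ are projective. Hence $M\in\mathcal T^{\perp_1}$ if and only if $\operatorname{Ext}^i_R(F,M)=0$ for all $i\geq1$ and all $F\in\mathcal F$. Every finitely presented module is isomorphic to a member of $\mathcal F$, so this is exactly the definition of $\mathsf{SfpInj}(R)$. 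Note that $\mathcal T$ is a set, because $\mathcal F$ is a set of isomorphism classes and we fixed one resolution for each of its members.

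\emph{Step 2: completeness.} By the Eklof--Trlifaj theorem, a cotorsion pair cogenerated by a set of modules is complete. Since $R\in\mathcal F\subseteq\mathcal T$, the small object argument produces functorial special preenvelopes $0\to M\to S\to W\to 0$ with $S\in\mathcal T^{\perp_1}$ and $W$ $\mathcal T$-filtered. Salce's trick then gives functorial special precovers: embed $M$ functorially into an injective (or take a functorial free presentation) and form a pullback. Functoriality follows as in \cite[Theorem 5.16]{StovicekExact}, which applies to $R$-Mod. By definition the left class is ${}^{\perp_1}\mathsf{SfpInj}(R)=\mathsf{WfpProj}(R)$, and the pair is a cotorsion pair because $\mathcal T^{\perp_1}$ is of the form $\mathcal S^{\perp_1}$ and $({}^{\perp_1}(\mathcal S^{\perp_1}),\mathcal S^{\perp_1})$ is always a cotorsion pair.

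\emph{Step 3: heredity.} It suffices to show that $\mathsf{SfpInj}(R)$ is closed under cokernels of monomorphisms, or dually that $\mathsf{WfpProj}(R)$ is closed under kernels of epimorphisms. Take $0\to A\to B\to C\to0$ with $A,B\in\mathsf{SfpInj}(R)$. For $F$ finitely presented, the long exact sequence gives
\[
\operatorname{Ext}^i_R(F,B)\to\operatorname{Ext}^i_R(F,C)\to\operatorname{Ext}^{i+1}_R(F,A),
\]
whose outer terms vanish for $i\geq1$, so $C\in\mathsf{SfpInj}(R)$. For a complete cotorsion pair in a category with enough injectives, coresolving closure of the right class is equivalent to $\operatorname{Ext}^i(\mathsf{WfpProj},\mathsf{SfpInj})=0$ for all $i\geq1$. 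Concretely, dimension shift in the second variable using an injective coresolution of $N\in\mathsf{SfpInj}(R)$, whose cosyzygies stay in $\mathsf{SfpInj}(R)$ by the closure just proved.

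The main care point is the functoriality claim in Step 2, but it is a standard citation. The rest is routine dimension shifting.
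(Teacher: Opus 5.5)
Your proposal is correct and follows essentially the same route as the paper. You identify $\mathcal T^{\perp_1}$ with $\mathsf{SfpInj}(R)$ by dimension shifting, take the cotorsion pair cogenerated by the set $\mathcal T$ with functorial completeness from \v{S}\v{t}ov\'{\i}\v{c}ek's Theorem 5.16 (using $R\in\mathcal T$ as a generator), and get heredity from the coresolving closure of $\mathsf{SfpInj}(R)$ plus dimension shifting along an injective coresolution. One small slip, which does not matter because you rely on the cited theorem for functoriality anyway: Salce's trick for passing from special preenvelopes to special precovers uses a free presentation $0\to K\to R^{(M)}\to M\to0$ and a pushout along a special preenvelope of $K$, whereas an injective embedding followed by a pullback is the converse direction.
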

\begin{proof}
For each fixed projective resolution, dimension shifting gives
\[
\operatorname{Ext}^1_R(\Omega^jF,M)
\cong\operatorname{Ext}^{j+1}_R(F,M)\qquad(j\geq0).
\]
Thus $M\in\mathcal T^{\perp_1}$ if and only if all positive Ext groups
from finitely presented modules to $M$ vanish, which proves
$\mathsf{SfpInj}(R)=\mathcal T^{\perp_1}$. Since
$\mathsf{WfpProj}(R)={}^{\perp_1}\mathsf{SfpInj}(R)$, we have
$\mathcal T\subseteq\mathsf{WfpProj}(R)$ and hence
\[
\mathsf{SfpInj}(R)\subseteq\mathsf{WfpProj}(R)^{\perp_1}
\subseteq\mathcal T^{\perp_1}=\mathsf{SfpInj}(R).
\]
This verifies both orthogonal equalities. The module category is
efficient with its abelian exact structure, and
$R\in\mathcal T\subseteq\operatorname{Filt}(\mathcal T)$ is a
generator. Therefore \cite[Proposition 3.13 and Theorem 5.16]{StovicekExact}
apply and give functorial completeness.

The long exact Ext sequence shows that $\mathsf{SfpInj}(R)$ contains the
injectives and is closed under extensions and cokernels of
monomorphisms between its objects. Consequently, all cosyzygies
$B^j$ in an injective resolution of $B\in\mathsf{SfpInj}(R)$ again belong
to $\mathsf{SfpInj}(R)$. For $A\in\mathsf{WfpProj}(R)$ and $i\geq1$, dimension
shifting gives
$\operatorname{Ext}^i_R(A,B)\cong
\operatorname{Ext}^1_R(A,B^{i-1})=0$, proving hereditarity. 
\end{proof}

For a class of modules $\mathcal L$, write
\[
\widetilde{\mathcal L}=
\{X:X\text{ is acyclic and }Z_nX\in\mathcal L\text{ for all }n\}.
\]
Thus $\widetilde{\mathsf{SfpInj}}(R)$ is precisely the class of strongly
FP-injective complexes. For the pair of Proposition~\ref{module-pair},
define
\begin{align*}
\operatorname{dg}\mathsf{WfpProj}(R)
&=\{P:P_n\in\mathsf{WfpProj}(R)\ \forall n,\ \operatorname{Hom}_R(P,B)\text{ is acyclic}
       \ \forall B\in\widetilde{\mathsf{SfpInj}}(R)\},\\
\operatorname{dg}\mathsf{SfpInj}(R)
&=\{I:I_n\in\mathsf{SfpInj}(R)\ \forall n,\ \operatorname{Hom}_R(A,I)\text{ is acyclic}
       \ \forall A\in\widetilde{\mathsf{WfpProj}}(R)\}.
\end{align*}
These are the induced classes of \cite[Definition 3.3]{GillespieFlat}.
The Hom-complex conditions are equivalent to null-homotopy of every
chain map in question, since both test classes are closed under shifts.

\begin{Theorem}\label{induced-pairs}
For any ring $R$, the pairs
\[
(\operatorname{dg}\mathsf{WfpProj}(R),\widetilde{\mathsf{SfpInj}}(R)),\qquad
(\widetilde{\mathsf{WfpProj}}(R),\operatorname{dg}\mathsf{SfpInj}(R))
\]
are complete hereditary cotorsion pairs in the abelian category
$\operatorname{Ch}(R)$. If $\mathsf{Ex}$ is the class of acyclic
complexes, then
\[
\operatorname{dg}\mathsf{WfpProj}(R)\cap\mathsf{Ex}=\widetilde{\mathsf{WfpProj}}(R),
\qquad
\operatorname{dg}\mathsf{SfpInj}(R)\cap\mathsf{Ex}=\widetilde{\mathsf{SfpInj}}(R).
\]
\end{Theorem}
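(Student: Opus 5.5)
This is an instance of Gillespie's induced cotorsion pairs theorem \cite{GillespieFlat}. It says that if $(\mathcal A,\mathcal B)$ is a complete hereditary cotorsion pair in $R$-Mod, then $(\operatorname{dg}\mathcal A,\widetilde{\mathcal B})$ and $(\widetilde{\mathcal A},\operatorname{dg}\mathcal B)$ are cotorsion pairs in $\operatorname{Ch}(R)$. Moreover, $\operatorname{dg}\mathcal A\cap\mathsf{Ex}=\widetilde{\mathcal A}$ and $\operatorname{dg}\mathcal B\cap\mathsf{Ex}=\widetilde{\mathcal B}$. The plan is to apply this to the pair of Proposition~\ref{module-pair} and then verify completeness and hereditarity separately.

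\textbf{Orthogonality and intersections.} First I would check orthogonality. The key identity is that for $P$ with components in $\mathsf{WfpProj}(R)$ and $B$ with components in $\mathsf{SfpInj}(R)$, every extension of $B$ by $P$ splits degreewise, by hereditarity of the module pair. This gives
\[
\operatorname{Ext}^1_{\operatorname{Ch}(R)}(P,\Sigma^{n}B)\cong H_{-n-1}\operatorname{Hom}_R(P,B)
\]
up to sign and indexing conventions. So for such complexes, Ext-vanishing against all shifts is exactly acyclicity of the Hom-complex.
\begin{itemize}
\item For $\operatorname{dg}\mathsf{WfpProj}(R)={}^{\perp_1}\widetilde{\mathsf{SfpInj}}(R)$, test against the disks $D^n(M)$ with $M\in\mathsf{SfpInj}(R)$. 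These lie in $\widetilde{\mathsf{SfpInj}}(R)$, and $\operatorname{Ext}^1(P,D^n(M))\cong\operatorname{Ext}^1_R(P_n,M)$ up to indexing. This forces $P_n\in\mathsf{WfpProj}(R)$.
\item For $\widetilde{\mathsf{SfpInj}}(R)=\operatorname{dg}\mathsf{WfpProj}(R)^{\perp_1}$, test against the spheres $S^n(T)$, $T\in\mathcal T$. These lie in $\operatorname{dg}\mathsf{WfpProj}(R)$, which is checked by the same Ext computation. We have $\operatorname{Ext}^1(S^n(T),C)=0$ for all $T\in\mathcal T$ and all $n$ if and only if $C$ is acyclic with $Z_nC\in\mathcal T^{\perp_1}=\mathsf{SfpInj}(R)$; the acyclicity comes from $T=R$. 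This matches Corollary~\ref{sfi-cycles}.
\item The second pair is dual: the disks $D^n(T)$ and the spheres $S^n(M)$ play the analogous roles.
\item For the intersection formulas, the standard argument is that an acyclic complex $X$ with components in the left (resp.\ right) class has cycles in that class if and only if $\operatorname{Hom}_R(X,B)$ (resp.\ $\operatorname{Hom}_R(A,X)$) is acyclic for all $B\in\widetilde{\mathsf{SfpInj}}(R)$ (resp.\ $A\in\widetilde{\mathsf{WfpProj}}(R)$). This is Gillespie's Lemma~3.10 and Theorem~3.12 in \cite{GillespieFlat}.
\end{itemize}

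\textbf{Completeness.} For completeness I would use the set $\mathcal S=\{S^n(T),D^n(T):T\in\mathcal T,\ n\in\mathbb Z\}$. By the sphere computation, $\mathcal S^{\perp_1}=\widetilde{\mathsf{SfpInj}}(R)$; the disks impose no further condition. Since $\operatorname{Ch}(R)$ is a Grothendieck category and $\mathcal S$ contains the generators $D^n(R)$, \cite[Theorem 5.16]{StovicekExact} gives functorial completeness of the first pair. For the second pair, $\widetilde{\mathsf{WfpProj}}(R)$ is not obviously cogenerated by a set. Here I would invoke Gillespie's result that completeness of $(\mathcal A,\mathcal B)$ transfers to $(\widetilde{\mathcal A},\operatorname{dg}\mathcal B)$ when $(\mathcal A,\mathcal B)$ is cogenerated by a set. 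Alternatively, I would use the deconstructibility of $\mathsf{WfpProj}(R)=\operatorname{Filt}(\mathcal T)$ up to summands: by Eklof--Trlifaj style arguments, $\widetilde{\mathsf{WfpProj}}(R)$ is then deconstructible, hence the left class of a cotorsion pair cogenerated by a set. This step is the main obstacle, and I would cite \cite{GillespieFlat} for it.

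\textbf{Hereditarity.} Finally I would prove hereditarity. $\widetilde{\mathsf{SfpInj}}(R)$ is closed under cokernels of monomorphisms, since the long exact homology sequence preserves acyclicity and the snake lemma together with the module-level closure handles the cycles. Dually, $\widetilde{\mathsf{WfpProj}}(R)$ is closed under kernels of epimorphisms, and both classes contain enough injectives, respectively enough projectives. In a complete cotorsion pair in an abelian category either condition implies hereditarity.
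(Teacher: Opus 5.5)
Your proposal is correct, but it reaches the statement by a different route than the paper.

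\textbf{The paper's route.} The proof is entirely by citation. \cite[Corollaries 3.8 and 3.13]{GillespieFlat}, applied to the complete hereditary pair of Proposition~\ref{module-pair}, give the two cotorsion pairs, their hereditarity and the compatibility equalities. Then \cite[Theorem 3.5]{YangLiu} gives completeness of both induced pairs for an arbitrary complete hereditary module pair, with no set-theoretic input.

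\textbf{Your route.} You check orthogonality by hand, using degreewise splitting plus sphere and disk tests. Two small corrections here: degreewise splitting only needs $\operatorname{Ext}^1_R(\mathsf{WfpProj}(R),\mathsf{SfpInj}(R))=0$, not hereditarity; and for the second pair, acyclicity is forced by testing against $S^n(E)$ with $E$ an injective cogenerator, not against $R$. You prove hereditarity directly via (co)resolving closure, which is fine. You obtain completeness of the first pair from the set $\{S^n(T),D^n(T)\}$ and \cite[Theorem 5.16]{StovicekExact}. This is exactly what the paper does afterwards in Proposition~\ref{ordinary-set}, and it gives more than the theorem asks: functorial approximations and a filtration description of $\operatorname{dg}\mathsf{WfpProj}(R)$.

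\textbf{The second pair.} This is the one step to repair. \cite{GillespieFlat} does not provide a general transfer of completeness to $(\widetilde{\mathcal A},\operatorname{dg}\mathcal B)$ for set-cogenerated pairs. It establishes completeness only for the flat--cotorsion pair, by a purity argument specific to flat modules. So that citation does not carry the step.

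Your deconstructibility alternative does work, provided you invoke the fact that $\widetilde{\mathcal F}$ is deconstructible whenever $\mathcal F$ is. This is a Hill-lemma type result, cf.\ \cite{StovicekHillLemma}, not an Eklof--Trlifaj argument alone. With it, the argument runs as follows:
\begin{enumerate}
\item $\mathsf{WfpProj}(R)$ is deconstructible, so $\widetilde{\mathsf{WfpProj}}(R)=\operatorname{Filt}(\mathcal S)$ for a set $\mathcal S$.
\item $\widetilde{\mathsf{WfpProj}}(R)$ is closed under retracts and contains the generator $\bigoplus_n D^n(R)$.
\item Hence the pair cogenerated by $\mathcal S$ is complete, with left class $\widetilde{\mathsf{WfpProj}}(R)$ and right class $\widetilde{\mathsf{WfpProj}}(R)^{\perp_1}=\operatorname{dg}\mathsf{SfpInj}(R)$.
\end{enumerate}
This route even yields functorial completeness of the second pair, at the cost of heavier machinery. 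The paper's appeal to Yang--Liu is shorter and avoids deconstructibility altogether.
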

\begin{proof}
Proposition~\ref{module-pair} supplies a complete hereditary cotorsion
pair in $R$-Mod. Applying \cite[Corollaries 3.8 and 3.13]{GillespieFlat}
gives the two hereditary cotorsion pairs and the displayed compatibility
equalities. Their completeness follows from
\cite[Theorem 3.5]{YangLiu}.
\end{proof}

The preceding theorem already gives the existence of special
approximations for both pairs. We now make the first pair explicit:
a set of sphere and disk complexes will cogenerate it. This also
allows the approximations for this pair to be chosen functorially
and describes $\operatorname{dg}\mathsf{WfpProj}(R)$ by filtrations. We begin
with the Ext computations needed to identify the right orthogonal.

\begin{Lemma}\label{ordinary-tests}
For every module $M$, complex $X$, and integer $n$,
\[
\operatorname{Ext}^i_{\operatorname{Ch}(R)}(D^n(M),X)
\cong\operatorname{Ext}^i_R(M,X_{n+1})\quad(i\geq1),
\]
and $\operatorname{Ext}^1_{\operatorname{Ch}(R)}(S^n(R),X)
\cong H_{n-1}(X)$. If $X$ is acyclic, then also
\[
\operatorname{Ext}^i_{\operatorname{Ch}(R)}(S^n(M),X)
\cong\operatorname{Ext}^i_R(M,Z_nX)\quad(i\geq1).
\]
\end{Lemma}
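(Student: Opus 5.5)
The plan is to treat the three isomorphisms separately, using the standard adjunctions for disk and sphere complexes together with derived-functor arguments.

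\emph{Disks.} The functor $D^n(-)\colon R\text{-}\mathrm{Mod}\to\operatorname{Ch}(R)$ is left adjoint to evaluation $X\mapsto X_{n+1}$. With the convention of the paper, $D^n(M)$ has $M$ in degrees $n+1$ and $n$, so a chain map $D^n(M)\to X$ is determined by its component in the top degree. Both functors are exact, so the adjunction passes to Yoneda Ext:
\[
\operatorname{Ext}^i_{\operatorname{Ch}(R)}(D^n(M),X)\cong\operatorname{Ext}^i_R(M,X_{n+1}).
\]
Concretely, I will take a projective resolution $P_\bullet\to M$ in $R$-Mod. Then $D^n(P_\bullet)\to D^n(M)$ is a resolution by projective complexes, because $D^n$ of a projective module is projective in $\operatorname{Ch}(R)$ (it is left adjoint to an exact functor). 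Applying $\operatorname{Hom}_{\operatorname{Ch}(R)}(-,X)$ and using the adjunction identifies the resulting complex with $\operatorname{Hom}_R(P_\bullet,X_{n+1})$, which gives the isomorphism for all $i\ge1$.

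\emph{$S^n(R)$.} I will use the short exact sequence $0\to S^{n-1}(R)\to D^{n-1}(R)\to S^n(R)\to0$, which consists of $R$ in degree $n$ and the identity/inclusion in degree $n-1$. Here $D^{n-1}(R)$ is projective, so the long exact sequence gives $\operatorname{Ext}^1(S^n(R),X)$ as the cokernel of
\[
\operatorname{Hom}(D^{n-1}(R),X)\cong X_n\longrightarrow\operatorname{Hom}(S^{n-1}(R),X)\cong Z_{n-1}X.
\]
I will identify this map as $d_n$, so the cokernel is $Z_{n-1}X/B_{n-1}X=H_{n-1}(X)$.

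\emph{$S^n(M)$ with $X$ acyclic.} I will argue in two steps.

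\textbf{Step 1 (degree-one morphisms).} Note that $\operatorname{Hom}(S^n(M),X)=\operatorname{Hom}_R(M,Z_nX)$. Tensoring the sequence above with $M$ gives
\[
0\to S^{n-1}(M)\to D^{n-1}(M)\to S^n(M)\to0.
\]
Applying $\operatorname{Hom}(-,X)$ and using the disk case yields the exact sequence
\[
X_n\text{-}\mathrm{maps}\to \operatorname{Hom}_R(M,Z_{n-1}X)\to\operatorname{Ext}^1(S^n(M),X)\to\operatorname{Ext}^1_R(M,X_n)\to\operatorname{Ext}^1(S^{n-1}(M),X)\to\cdots
\]

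\textbf{Step 2 (comparison).} Comparing this with the long exact sequence of $\operatorname{Ext}_R(M,-)$ applied to $0\to Z_nX\to X_n\to Z_{n-1}X\to0$, which is exact because $X$ is acyclic, calls for a five-lemma induction. That induction is awkward, since the sequences interleave with a shift. I therefore prefer a cleaner route: the functor $S^n(-)$ has right adjoint $X\mapsto Z_nX$, and $Z_n$ is exact on short exact sequences of acyclic complexes. The latter holds by the snake lemma, since $H_n$ of the kernel vanishes. So I will take an injective resolution $X\to I^\bullet$ in $\operatorname{Ch}(R)$. Injective complexes are contractible, hence acyclic, and therefore all cosyzygies are acyclic. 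Applying $Z_n$ then gives an exact coresolution $Z_nX\to Z_nI^\bullet$, and $Z_nI^k$ is an injective module, since injective complexes are disks on injective modules. By adjunction,
\[
\operatorname{Hom}(S^n(M),I^\bullet)\cong\operatorname{Hom}_R(M,Z_nI^\bullet),
\]
and taking cohomology gives $\operatorname{Ext}^i(S^n(M),X)\cong\operatorname{Ext}^i_R(M,Z_nX)$.

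The main point needing care is the exactness of $Z_nI^\bullet$, that is, acyclicity of all cosyzygies. This is handled by inductively applying the $3\times3$/long-exact homology argument to $0\to J^k\to I^k\to J^{k+1}\to0$.

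The same injective-resolution argument, with $X_{n+1}$ in place of $Z_n$, also reproves the disk case.
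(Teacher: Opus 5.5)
Your proposal is correct and follows essentially the paper's proof: the disk case via $D^n$ applied to a projective resolution plus the adjunction with $X_{n+1}$, the $S^n(R)$ case via $0\to S^{n-1}(R)\to D^{n-1}(R)\to S^n(R)\to0$ with the restriction map identified as $d_n$, and the acyclic case via an injective resolution with acyclic cosyzygies, on which $Z_n$ yields an injective resolution of $Z_nX$. Your abandoned Step 1 is unnecessary, and one small slip: lifting an $n$-cycle of the quotient to an $n$-cycle uses the vanishing of $H_{n-1}$ of the kernel, not $H_n$, which is harmless here since the kernel is acyclic.
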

\begin{proof}
Apply $D^n$ to a projective module resolution of $M$ and use the
adjunction
\[
\operatorname{Hom}_{\operatorname{Ch}(R)}(D^n(M),X)
\cong\operatorname{Hom}_R(M,X_{n+1}).
\]
For the second assertion, apply
$\operatorname{Hom}_{\operatorname{Ch}(R)}(-,X)$ to
$0\to S^{n-1}(R)\to D^{n-1}(R)\to S^n(R)\to0$.
Since $D^{n-1}(R)$ is projective in $\operatorname{Ch}(R)$, the
resulting exact sequence contains
\[
X_n\xrightarrow{d_n}Z_{n-1}X\longrightarrow
\operatorname{Ext}^1_{\operatorname{Ch}(R)}(S^n(R),X)
\longrightarrow0.
\]
Its cokernel is $H_{n-1}(X)$, as required.

Finally, take an injective resolution of an acyclic $X$ in
$\operatorname{Ch}(R)$. All its cosyzygies are acyclic, since the
injective complexes are acyclic. Taking $n$th cycles in each short exact
sequence of the resolution is exact: a cycle in the quotient lifts to
a cycle because the kernel is acyclic. Since the cycles of an injective
complex are injective modules, this gives an injective resolution of
$Z_nX$. The identity
$\operatorname{Hom}_{\operatorname{Ch}(R)}(S^n(M),-)
=\operatorname{Hom}_R(M,Z_n(-))$ proves the last assertion.
\end{proof}

\begin{Proposition}\label{ordinary-set}
With $\mathcal T$ as in Proposition~\ref{module-pair}, set
\[
\mathcal U=\{S^n(T),D^n(T):T\in\mathcal T,\ n\in\mathbb Z\},
\qquad
G=\bigoplus_{n\in\mathbb Z}\bigl(S^n(R)\oplus D^n(R)\bigr),
\qquad\mathcal U'=\mathcal U\cup\{G\}.
\]
Then
\[
\mathcal U^{\perp_1}=\mathcal U'^{\perp_1}
=\widetilde{\mathsf{SfpInj}}(R)
\quad\text{in }\operatorname{Ch}(R).
\]
The first pair in Theorem~\ref{induced-pairs} is functorially complete,
and
\[
\operatorname{dg}\mathsf{WfpProj}(R)
=\operatorname{Retr}\operatorname{Filt}_{\mathrm{ab}}(\mathcal U').
\]
Here $\operatorname{Filt}_{\mathrm{ab}}$ denotes continuous transfinite
filtrations starting at zero with successive quotients in $\mathcal U'$
and ordinary short exact successor sequences; $\operatorname{Retr}$
denotes closure under direct summands.
\end{Proposition}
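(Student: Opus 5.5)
The plan is to compute $\mathcal U^{\perp_1}$ directly from Lemma~\ref{ordinary-tests}. I then add $G$ and invoke \cite[Proposition 3.13 and Theorem 5.16]{StovicekExact}, exactly as in Proposition~\ref{module-pair}.

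\emph{Inclusion $\widetilde{\mathsf{SfpInj}}(R)\subseteq\mathcal U'^{\perp_1}$.} Let $X$ be strongly FP-injective. By Corollary~\ref{sfi-cycles}, $X$ is acyclic with cycles in $\mathsf{SfpInj}(R)$, and its components lie in $\mathsf{SfpInj}(R)$ by extension closure. Lemma~\ref{ordinary-tests} then gives
\[
\operatorname{Ext}^1(D^n(T),X)\cong\operatorname{Ext}^1_R(T,X_{n+1})=0
\quad\text{and}\quad
\operatorname{Ext}^1(S^n(T),X)\cong\operatorname{Ext}^1_R(T,Z_nX)=0,
\]
since $T\in\mathcal T\subseteq\mathsf{WfpProj}(R)$. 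The group $\operatorname{Ext}^1(G,X)$ is a product of groups of the form $\operatorname{Ext}^1(S^n(R),X)\cong H_{n-1}(X)=0$ and $\operatorname{Ext}^1(D^n(R),X)=0$, so it vanishes.

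\emph{Inclusion $\mathcal U^{\perp_1}\subseteq\widetilde{\mathsf{SfpInj}}(R)$.} Since $R\in\mathcal T$, the condition $\operatorname{Ext}^1(S^n(R),X)=0$ for all $n$ gives $H(X)=0$ by the second formula of Lemma~\ref{ordinary-tests}. So $X$ is acyclic, and the third formula applies: $\operatorname{Ext}^1_R(T,Z_nX)=0$ for all $T\in\mathcal T$. Proposition~\ref{module-pair} then yields $Z_nX\in\mathsf{SfpInj}(R)$. Corollary~\ref{sfi-cycles} finishes this inclusion. (The disks are not actually needed for this containment, only harmlessly included.) Together with $\mathcal U\subseteq\mathcal U'$, the two inclusions give $\mathcal U^{\perp_1}=\mathcal U'^{\perp_1}=\widetilde{\mathsf{SfpInj}}(R)$.

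\emph{Functorial completeness and the filtration description.} $\operatorname{Ch}(R)$ with its abelian exact structure is efficient: it is Grothendieck with exact directed colimits. The object $G$ is a generator, because $D^n(R)$ with $n\in\mathbb Z$ are projective generators, and $G\in\mathcal U'\subseteq\operatorname{Filt}_{\mathrm{ab}}(\mathcal U')$. Moreover $\mathcal U'$ is a set, since $\mathcal T$ is. Hence \cite[Proposition 3.13 and Theorem 5.16]{StovicekExact} show that $({}^{\perp_1}(\mathcal U'^{\perp_1}),\mathcal U'^{\perp_1})$ is a functorially complete cotorsion pair whose left class is $\operatorname{Retr}\operatorname{Filt}_{\mathrm{ab}}(\mathcal U')$. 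Since $\mathcal U'^{\perp_1}=\widetilde{\mathsf{SfpInj}}(R)$, Theorem~\ref{induced-pairs} identifies the left class ${}^{\perp_1}\widetilde{\mathsf{SfpInj}}(R)$ with $\operatorname{dg}\mathsf{WfpProj}(R)$. Therefore the first pair of Theorem~\ref{induced-pairs} is functorially complete, and $\operatorname{dg}\mathsf{WfpProj}(R)=\operatorname{Retr}\operatorname{Filt}_{\mathrm{ab}}(\mathcal U')$.

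The main point requiring care is checking the hypotheses of \cite[Theorem 5.16]{StovicekExact}, namely that the filtration closure contains a generator. This is precisely why $G$ is adjoined to $\mathcal U$. The Ext computations themselves are routine given Lemma~\ref{ordinary-tests}.
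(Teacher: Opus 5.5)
Your proposal is correct and follows essentially the same route as the paper. The sphere tests via Lemma~\ref{ordinary-tests} force acyclicity and strongly FP-injective cycles, the product decomposition of $\operatorname{Ext}^1(G,-)$ makes adjoining the generator $G$ harmless, and Theorem 5.16 of \v{S}\v{t}ov\'{\i}\v{c}ek in the efficient category $\operatorname{Ch}(R)$, together with Theorem~\ref{induced-pairs}, gives functorial completeness and the retract-of-filtrations description; the only difference is that you close the chain $\widetilde{\mathsf{SfpInj}}(R)\subseteq\mathcal U'^{\perp_1}\subseteq\mathcal U^{\perp_1}\subseteq\widetilde{\mathsf{SfpInj}}(R)$ instead of first proving $\mathcal U^{\perp_1}=\mathcal U'^{\perp_1}$ for arbitrary complexes.
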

\begin{proof}
Suppose $X\in\mathcal U^{\perp_1}$. Since $R\in\mathcal T$,
Lemma~\ref{ordinary-tests} gives
\[
H_{n-1}(X)\cong
\operatorname{Ext}^1_{\operatorname{Ch}(R)}(S^n(R),X)=0
\qquad(n\in\mathbb Z),
\]
so $X$ is acyclic. The same lemma now identifies the remaining sphere
tests with
$\operatorname{Ext}^1_R(T,Z_nX)=0$ for all $T\in\mathcal T$.
Thus $Z_nX\in\mathcal T^{\perp_1}=\mathsf{SfpInj}(R)$ and
$X\in\widetilde{\mathsf{SfpInj}}(R)$. Conversely, if
$X\in\widetilde{\mathsf{SfpInj}}(R)$, the exact sequences
$0\to Z_nX\to X_n\to Z_{n-1}X\to0$ and extension closure of
$\mathsf{SfpInj}(R)$ imply $X_n\in\mathsf{SfpInj}(R)$. Both the sphere and disk
Ext groups therefore vanish by Lemma~\ref{ordinary-tests}.

Coproducts of projective resolutions are projective resolutions in
$\operatorname{Ch}(R)$. Applying Hom into $X$ converts such a coproduct
into a product of cochain complexes of abelian groups. Products of
abelian groups are exact, so taking cohomology yields
\[
\operatorname{Ext}^1_{\operatorname{Ch}(R)}(G,X)
\cong\prod_{n\in\mathbb Z}
\left(\operatorname{Ext}^1_{\operatorname{Ch}(R)}(S^n(R),X)
\times\operatorname{Ext}^1_{\operatorname{Ch}(R)}(D^n(R),X)\right).
\]
It follows that adjoining $G$ does not change the right orthogonal.
The disk summands generate $\operatorname{Ch}(R)$: free epimorphisms
onto the components of any complex induce an epimorphism from a
coproduct of disks of $R$. Consequently, $G$ is a generator and
belongs to $\operatorname{Filt}_{\mathrm{ab}}(\mathcal U')$.

The category $\operatorname{Ch}(R)$ is Grothendieck, hence efficient
by \cite[Proposition 3.13]{StovicekExact}. We can therefore apply
\cite[Theorem 5.16]{StovicekExact} to the set $\mathcal U'$.
It gives functorial completeness and identifies
${}^{\perp_1}(\mathcal U'^{\perp_1})$ with the retracts of
$\mathcal U'$-filtered objects. By Theorem~\ref{induced-pairs}, this
left orthogonal is $\operatorname{dg}\mathsf{WfpProj}(R)$.
\end{proof}
The resulting approximations have two different uses. They give a
special strongly FP-injective preenvelope of every complex, and they
also provide quasi-isomorphic replacements by dg-weakly FP-projective
and dg-strongly FP-injective complexes. These are recorded separately
in the following sequences.
\begin{Corollary}\label{ordinary-approximations}
Every complex $X$ admits short exact sequences
\begin{align*}
&0\longrightarrow B_X\longrightarrow P_X\longrightarrow X\longrightarrow0,
&&B_X\in\widetilde{\mathsf{SfpInj}}(R),\quad P_X\in\operatorname{dg}\mathsf{WfpProj}(R),\\
&0\longrightarrow X\longrightarrow B^X\longrightarrow P^X\longrightarrow0,
&&B^X\in\widetilde{\mathsf{SfpInj}}(R),\quad P^X\in\operatorname{dg}\mathsf{WfpProj}(R),\\
&0\longrightarrow X\longrightarrow I_X\longrightarrow A_X\longrightarrow0,
&&I_X\in\operatorname{dg}\mathsf{SfpInj}(R),\quad A_X\in\widetilde{\mathsf{WfpProj}}(R).
\end{align*}
The first two can be chosen functorially. The map $X\to B^X$ is a
special strongly FP-injective preenvelope. The maps $P_X\to X$ and
$X\to I_X$ are quasi-isomorphisms.
\end{Corollary}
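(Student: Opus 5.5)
The three sequences come from the completeness statements already proved. The work is to check the functoriality and the quasi-isomorphism claims.

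The first two sequences are the special precover and special preenvelope sequences for the pair $(\operatorname{dg}\mathsf{WfpProj}(R),\widetilde{\mathsf{SfpInj}}(R))$. Proposition~\ref{ordinary-set} shows this pair is cogenerated by the set $\mathcal U'$ and is functorially complete, so both sequences exist and can be chosen functorially in $X$. The third sequence is the special preenvelope sequence for the second pair $(\widetilde{\mathsf{WfpProj}}(R),\operatorname{dg}\mathsf{SfpInj}(R))$, which is complete by Theorem~\ref{induced-pairs}. No functoriality is claimed for it, which is fortunate, since no cogenerating set for it has been produced.

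The preenvelope claim is formal. Take any $Y\in\widetilde{\mathsf{SfpInj}}(R)$ and apply $\operatorname{Hom}_{\operatorname{Ch}(R)}(-,Y)$ to $0\to X\to B^X\to P^X\to0$. This gives the exact segment
\[
\operatorname{Hom}(B^X,Y)\to\operatorname{Hom}(X,Y)\to\operatorname{Ext}^1_{\operatorname{Ch}(R)}(P^X,Y).
\]
The Ext term vanishes because $P^X\in\operatorname{dg}\mathsf{WfpProj}(R)={}^{\perp_1}\widetilde{\mathsf{SfpInj}}(R)$. Hence every map $X\to Y$ extends along $X\to B^X$, so $X\to B^X$ is a preenvelope. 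It is special because its cokernel $P^X$ lies in the left class.

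For the quasi-isomorphisms, use the long exact homology sequence. In $0\to B_X\to P_X\to X\to0$, the kernel $B_X$ is acyclic by Corollary~\ref{sfi-cycles}, so $P_X\to X$ is a quasi-isomorphism. In $0\to X\to I_X\to A_X\to0$, the cokernel $A_X\in\widetilde{\mathsf{WfpProj}}(R)$ is acyclic by definition of the tilde class, so $X\to I_X$ is a quasi-isomorphism.

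There is no real obstacle. The one point that needs care is attributing each sequence to the correct cotorsion pair, and noting that functoriality is asserted only for the pair with the explicit cogenerating set $\mathcal U'$.
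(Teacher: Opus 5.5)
Your proposal is correct and matches the paper's proof. The sequences come from Theorem~\ref{induced-pairs}, with functoriality from Proposition~\ref{ordinary-set}; the preenvelope property follows by applying $\operatorname{Hom}$ and using $\operatorname{Ext}^1$-vanishing, and the quasi-isomorphisms follow from the acyclic kernel $B_X$ and acyclic cokernel $A_X$ (for $B_X$ this is already built into the definition of $\widetilde{\mathsf{SfpInj}}(R)$, so Corollary~\ref{sfi-cycles} is not needed).
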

\begin{proof}
The sequences follow from Theorem~\ref{induced-pairs} and
Proposition~\ref{ordinary-set}. Applying Hom and using the first Ext
vanishing gives the precover and preenvelope factorization properties.
The quasi-isomorphism assertions follow from the acyclic kernel $B_X$
and acyclic cokernel $A_X$, respectively.
\end{proof}

\begin{Corollary}\label{sfi-model}
There is a hereditary abelian model structure on $\operatorname{Ch}(R)$
with cofibrant objects $\operatorname{dg}\mathsf{WfpProj}(R)$, fibrant objects
$\operatorname{dg}\mathsf{SfpInj}(R)$, and weak equivalences the
quasi-isomorphisms. Its trivially fibrant objects are exactly the
strongly FP-injective complexes, and its homotopy category is $D(R)$.
\end{Corollary}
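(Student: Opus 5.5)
The model structure will come from Hovey's correspondence: two compatible complete hereditary cotorsion pairs in an abelian category determine a unique abelian model structure, and hereditary pairs give a hereditary model structure. By Theorem~\ref{induced-pairs} we have two complete hereditary cotorsion pairs,
\[
(\operatorname{dg}\mathsf{WfpProj}(R),\widetilde{\mathsf{SfpInj}}(R)),\qquad
(\widetilde{\mathsf{WfpProj}}(R),\operatorname{dg}\mathsf{SfpInj}(R)).
\]
The same theorem gives the compatibility equalities
\[
\operatorname{dg}\mathsf{WfpProj}(R)\cap\mathsf{Ex}=\widetilde{\mathsf{WfpProj}}(R),
\qquad
\operatorname{dg}\mathsf{SfpInj}(R)\cap\mathsf{Ex}=\widetilde{\mathsf{SfpInj}}(R).
\]
So I take $\mathcal Q=\operatorname{dg}\mathsf{WfpProj}(R)$, $\mathcal R=\operatorname{dg}\mathsf{SfpInj}(R)$ and $\mathcal W=\mathsf{Ex}$. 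Then the two pairs are $(\mathcal Q,\mathcal R\cap\mathcal W)$ and $(\mathcal Q\cap\mathcal W,\mathcal R)$, which is exactly Hovey's setup. The remaining hypothesis is that $\mathcal W$ is thick. The class of acyclic complexes is closed under retracts, and the long exact homology sequence shows it has the two-out-of-three property for short exact sequences, so this is immediate.

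Hovey's theorem then gives an abelian model structure on $\operatorname{Ch}(R)$ with cofibrant objects $\mathcal Q$, fibrant objects $\mathcal R$ and trivial objects $\mathsf{Ex}$. It is hereditary because both pairs are hereditary by Theorem~\ref{induced-pairs}. The trivially fibrant objects are $\mathcal R\cap\mathcal W=\widetilde{\mathsf{SfpInj}}(R)$. By Corollary~\ref{sfi-cycles}, this class is precisely the class of strongly FP-injective complexes.

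The main point needing care is that the weak equivalences are the quasi-isomorphisms. In an abelian model structure, a map is a weak equivalence exactly when it factors as a monomorphism with trivial cokernel followed by an epimorphism with trivial kernel. Since the trivial objects are the acyclic complexes, every such composite is a quasi-isomorphism. For the converse, I would use the standard argument for abelian model structures whose trivial objects are the acyclics (as in Gillespie's work). Factor $f\colon X\to Y$ as a trivial cofibration followed by a fibration, using the completeness of the second pair. The fibration is then an epimorphism that is a quasi-isomorphism, so its kernel is acyclic by the long exact sequence, and it is therefore trivial.

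With the weak equivalences identified, the homotopy category is $\operatorname{Ch}(R)$ localized at the quasi-isomorphisms, namely $D(R)$.
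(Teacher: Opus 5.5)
Your proposal is correct and takes essentially the paper's route. The paper cites \cite[Corollary 3.8]{YangLiu}, which packages the Hovey-correspondence argument you carry out by hand on the two compatible complete hereditary pairs of Theorem~\ref{induced-pairs}, and then identifies the trivially fibrant objects with $\widetilde{\mathsf{SfpInj}}(R)$ and the homotopy category with $D(R)$ just as you do; your checks that $\mathsf{Ex}$ is thick and that the weak equivalences are exactly the quasi-isomorphisms unpack what that citation supplies.
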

\begin{proof}
Apply \cite[Corollary 3.8]{YangLiu} to Proposition~\ref{module-pair}.
The two compatible complete cotorsion pairs are those of
Theorem~\ref{induced-pairs}; compatibility identifies the trivially
fibrant objects with $\widetilde{\mathsf{SfpInj}}(R)$. Localization at the
quasi-isomorphisms is the ordinary unbounded derived category $D(R)$.
\end{proof}

\subsection{Cartan-Eilenberg Strongly FP-injective complexes and the Cartan-Eilenberg exact structure}

The ordinary strongly FP-injective complexes are necessarily acyclic.
To obtain approximations which also act on cycles, boundaries and
homology, we will use the Cartan-Eilenberg exact structure. A short exact sequence of complexes is CE-exact if it is exact degreewise and also on cycles, boundaries, cokernels and homology. This is equivalent to being degreewise exact and exact on cycles \cite[Lemma 5.2]{En}. These sequences form the Cartan-Eilenberg exact structure
$\mathcal E_{\mathrm{CE}}$ on $\operatorname{Ch}(R)$.

This exact category has enough projectives and injectives, namely
$\operatorname{CE}(Proj)$ and $\operatorname{CE}(Inj)$
\cite[Theorems 5.5 and 5.7]{En} and is efficient
\cite[Proposition 3.5]{EIW}. Consequently, its Yoneda Ext agrees
with the relative derived functor introduced in Section 2:
\[
\operatorname{Ext}^i_{\mathrm{CE}}(X,Y)
=\overline{\operatorname{Ext}}^{\,i}(X,Y)\qquad(i\geq1).
\]
The results of \cite[Theorem 9.4]{En} and
\cite[Theorem 3.9]{YangLiangCE} yield a complete hereditary
CE cotorsion pair from the module pair in
Proposition~\ref{module-pair}. We will refine this conclusion by identifying
an explicit cogenerating set, proving functorial completeness, and
describing $\operatorname{CE}(\mathsf{WfpProj}(R))$ by CE-exact filtrations.

\begin{Definition}
A complex is CE finitely presented if it is bounded and its components,
cycles, boundaries and homology modules are finitely presented.
A complex $C$ is CE-strongly FP-injective if $\operatorname{Ext}^i_{\mathrm{CE}}(F,C)=0$ for every CE finitely presented $F$ and every $i\geq1$. The corresponding $\operatorname{Ext_{CE}^1}$ condition defines a CE-FP-injective complex, cf. \cite{LL}.
\end{Definition}
The sphere and disk computations now detect cycles and components
without an acyclicity assumption on the target. This is the basic
difference from the ordinary Ext calculations of
Lemma~\ref{ordinary-tests}.
\begin{Lemma}\label{EnExtiso}
For a module $M$, a complex $X$, and integers $n$ and $i\geq1$, there
are natural isomorphisms
\[
\operatorname{Ext}^i_{\mathrm{CE}}(S^n(M),X)
\cong\operatorname{Ext}^i_R(M,Z_nX),\qquad
\operatorname{Ext}^i_{\mathrm{CE}}(D^n(M),X)
\cong\operatorname{Ext}^i_R(M,X_{n+1}).
\]
\end{Lemma}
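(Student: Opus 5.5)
The plan is to compute $\operatorname{Ext}_{\mathrm{CE}}$ from a CE-projective resolution of the sphere or disk. This is legitimate because $\operatorname{Ext}_{\mathrm{CE}}=\overline{\operatorname{Ext}}$ is right balanced by $\operatorname{CE}(Proj)\times\operatorname{CE}(Inj)$. The key input is a pair of adjunctions:
\[
\operatorname{Hom}_{\operatorname{Ch}(R)}(S^n(M),X)\cong\operatorname{Hom}_R(M,Z_nX),\qquad
\operatorname{Hom}_{\operatorname{Ch}(R)}(D^n(M),X)\cong\operatorname{Hom}_R(M,X_{n+1}).
\]
These are natural in both variables. It therefore suffices to show that $S^n$ and $D^n$ send a projective resolution of $M$ to a CE-projective resolution of $S^n(M)$, respectively $D^n(M)$.

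First step. Fix a projective resolution $\cdots\to P_1\to P_0\to M\to0$ in $R$-Mod. Complexes of the form $S^n(P)$ and $D^n(P)$ with $P$ projective are CE-projective, since their components, cycles, boundaries and homology are all $0$ or $P$. This matches Enochs' description of $\operatorname{CE}(Proj)$ as sums of such spheres and disks.

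Next, every short exact sequence $0\to K\to P\to L\to0$ of modules gives a CE-exact sequence after applying $S^n$ or $D^n$. By \cite[Lemma 5.2]{En} it is enough to check exactness in each degree and on cycles. For spheres, the cycles are the modules themselves in degree $n$. For disks, the cycles are the modules in degree $n$ and $0$ elsewhere. So both checks are immediate. Splicing these sequences shows that $S^n(P_\bullet)\to S^n(M)$ and $D^n(P_\bullet)\to D^n(M)$ are CE-projective resolutions.

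Second step. Apply $\operatorname{Hom}_{\operatorname{Ch}(R)}(-,X)$ and use the adjunctions degreewise. This identifies the resulting cochain complexes with $\operatorname{Hom}_R(P_\bullet,Z_nX)$ and $\operatorname{Hom}_R(P_\bullet,X_{n+1})$. Taking cohomology gives the stated isomorphisms for all $i\geq1$. Naturality in $X$ follows from naturality of the adjunctions. Naturality in $M$ follows from the comparison theorem for projective resolutions.

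The main obstacle is confirming that the spheres and disks on projectives are genuinely projective in $\mathcal E_{\mathrm{CE}}$, and that $\operatorname{Ext}_{\mathrm{CE}}$ can be computed by such relative resolutions. For this I will cite \cite[Theorems 5.5 and 5.7]{En} together with the identification $\operatorname{Ext}^i_{\mathrm{CE}}=\overline{\operatorname{Ext}}^{\,i}$ recalled above.

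A fallback for the sphere case is to prove directly that $\operatorname{Hom}(S^n(P),-)=\operatorname{Hom}_R(P,Z_n(-))$ is exact on CE-exact sequences, because $Z_n$ is exact on them. The same argument with $X_{n+1}$ handles disks.
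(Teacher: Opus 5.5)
Your proposal is correct and follows essentially the same route as the paper. You apply $S^n$ and $D^n$ to a projective resolution of $M$, note that this yields CE-exact resolutions by CE-projective complexes, and identify the resulting cochain complexes with $\operatorname{Hom}_R(P_\bullet,Z_nX)$ and $\operatorname{Hom}_R(P_\bullet,X_{n+1})$ via the sphere and disk adjunctions; your extra checks (cycles of disks, direct exactness of $\operatorname{Hom}_R(P,Z_n(-))$ on CE-exact sequences, naturality via the comparison theorem) just fill in details the paper leaves implicit.
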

\begin{proof}
Apply $S^n$, respectively $D^n$, to a projective resolution of $M$.
The resulting augmented sequences are CE-exact and their terms are
CE-projective, so they are CE-projective resolutions. The ordinary Hom
identities with $Z_nX$ and $X_{n+1}$ identify the cochain complexes
computing the two sides.
\end{proof}
These computations extend the module orthogonality conditions to CE
classes. We state the argument for an arbitrary class of modules and
then specialize to the finitely presented modules.
\begin{Lemma}\label{lemma}
Let $\mathcal C$ be a class of modules containing zero, and put
$\mathcal D=\mathcal C^{\perp_{\geq1}}$. Then
\[
\operatorname{CE}(\mathcal D)
=\operatorname{CE}(\mathcal C)^{\perp_{\geq1,\mathrm{CE}}}
=\bigl(\operatorname{CE}(\mathcal C)^b\bigr)^{\perp_{\geq1,\mathrm{CE}}},
\]
where the superscript $b$ denotes bounded complexes.
\end{Lemma}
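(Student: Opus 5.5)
We prove the two inclusions
\[
\operatorname{CE}(\mathcal D)\subseteq\operatorname{CE}(\mathcal C)^{\perp_{\geq1,\mathrm{CE}}}
\subseteq\bigl(\operatorname{CE}(\mathcal C)^b\bigr)^{\perp_{\geq1,\mathrm{CE}}}
\subseteq\operatorname{CE}(\mathcal D).
\]
The middle inclusion is trivial, so there are two steps.

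\emph{The last inclusion.} Since $0\in\mathcal C$, every sphere $S^n(M)$ and every disk $D^n(M)$ with $M\in\mathcal C$ is a bounded complex in $\operatorname{CE}(\mathcal C)$. Its components, cycles, boundaries and homology are $M$ or $0$. Let $X$ lie in the bounded right orthogonal. Lemma~\ref{EnExtiso} gives, for all $i\geq1$ and $M\in\mathcal C$,
\[
\operatorname{Ext}^i_R(M,Z_nX)=0,\qquad \operatorname{Ext}^i_R(M,X_{n+1})=0.
\]
Hence $Z_nX\in\mathcal D$ and $X_n\in\mathcal D$ for all $n$. We still need $B_nX$ and $H_nX$ in $\mathcal D$. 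I would obtain these from the CE-structure. There are CE-projective-type decompositions: $\operatorname{Ext}_{\mathrm{CE}}$ against $X$ is governed by the functors $X\mapsto Z_n X,\,X_n,\,B_nX,\,H_nX$. More concretely, one can show
\[
\operatorname{Ext}^i_{\mathrm{CE}}(S^n(M),X)\cong \operatorname{Ext}^i_R(M,Z_nX).
\]
The analogous computation with the "boundary sphere" complex $\cdots\to0\to M\xrightarrow{=}M\to0$ is already a disk, so that route does not help. Instead I would use the dual description through CE-injective resolutions (the balance Proposition of Section 2). Take a CE-injective resolution of $X$. Taking $B_n$, $H_n$, $Z_n$ and components of it gives injective resolutions of $B_nX$, $H_nX$, $Z_nX$ and $X_n$, since CE-exact sequences are exact on all of these and CE-injectives have injective $B$, $Z$ and $H$. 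Then
\[
\operatorname{Ext}^i_{\mathrm{CE}}(S^n(M),X)\cong\operatorname{Ext}^i_R(M,Z_nX),
\]
and likewise, via adjunctions such as
\[
\operatorname{Hom}(Y,S^n(N))\cong\operatorname{Hom}_R(Y_n/B_nY,N),
\]
one obtains identifications of $\operatorname{Ext}^i_R(M,B_nX)$ and $\operatorname{Ext}^i_R(M,H_nX)$ with CE-Ext groups from suitable bounded CE-$\mathcal C$ complexes. This adjunction bookkeeping is the step I expect to be the main obstacle. If it fails, a fallback is the short exact sequences
\[
0\to B_nX\to Z_nX\to H_nX\to0,\qquad 0\to Z_nX\to X_n\to B_{n-1}X\to0 .
\]
With $X_n,Z_nX\in\mathcal D$, these yield $B$ and $H$ in $\mathcal D$ once some closure property (e.g. under cokernels of monomorphisms) holds. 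That closure is automatic when $\mathcal D$ is the right class of a hereditary pair, as in our application with $\mathcal C$ the finitely presented modules.

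\emph{The first inclusion.} Let $X\in\operatorname{CE}(\mathcal D)$ and $C\in\operatorname{CE}(\mathcal C)$. Take a CE-injective resolution $X\to E^\bullet$. Each CE-injective complex is isomorphic to a direct sum of shifts of $D^n(I)$ and $S^n(I)$ with $I$ injective. Hence $\operatorname{Hom}(C,-)$ of it reduces to products of
\[
\operatorname{Hom}_R(C_{n},I)\quad\text{and}\quad\operatorname{Hom}_R(C_n/B_nC,I)
\]
(the latter up to the convention for spheres). The resolution $E^\bullet$ splits accordingly into injective resolutions of the $B$ and $H$ parts of $X$. The CE-Ext groups therefore appear in long exact sequences built from $\operatorname{Ext}^i_R(C_n,X_m)$, $\operatorname{Ext}^i_R(H_nC,H_nX)$, $\operatorname{Ext}^i_R(B_nC,B_nX)$ and similar groups. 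All of these vanish, since the modules of $C$ lie in $\mathcal C$ and those of $X$ lie in $\mathcal D$. Rather than tracking this in general, I would use the structure theorem: any CE-exact complex decomposes via the CE-exact sequences
\[
0\to Z(C)\to C\to \Sigma^{-1}B(C)\to0,\qquad 0\to B(C)\to Z(C)\to H(C)\to0 .
\]
Here complexes with zero differential, such as $Z(C)$, $B(C)$ and $H(C)$, are direct sums (products) of spheres. This reduces the vanishing to spheres $S^n(M)$, $M\in\mathcal C$, for which Lemma~\ref{EnExtiso} gives $\operatorname{Ext}^i_R(M,Z_nX)=0$. For arbitrary direct sums of spheres, the CE-Ext group into $X$ is the product of the individual groups, by the argument already used for $G$ in Proposition~\ref{ordinary-set}.
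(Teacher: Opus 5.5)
The inclusion $\operatorname{CE}(\mathcal D)\subseteq\operatorname{CE}(\mathcal C)^{\perp_{\geq1,\mathrm{CE}}}$ has a genuine gap.

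Your reduction to spheres assumes that $0\to Z(C)\to C\to C/Z(C)\to0$ is CE-exact, and it is not. The quotient $C/Z(C)$ is the zero-differential complex with $B_{n-1}C$ in degree $n$. So on $n$-cycles the sequence reads $0\to Z_nC\xrightarrow{=}Z_nC\xrightarrow{0}B_{n-1}C\to0$. For instance, $0\to S^n(M)\to D^n(M)\to S^{n+1}(M)\to0$ is not even exact on homology. There is therefore no long exact $\operatorname{Ext}_{\mathrm{CE}}$ sequence along it.

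The reduction also proves too much. It only uses $Z_nX\in\mathcal D$, so applied to the disks $D^n(M)\in\operatorname{CE}(\mathcal C)$ it would force $X_{n+1}\in\mathcal D$ whenever all cycles of $X$ lie in $\mathcal D$. Take $\mathcal C$ to be all abelian groups, so that $\mathcal D$ is the injectives, and take $X=(\mathbb Z\hookrightarrow\mathbb Q)$ in degrees $1,0$. Its cycles $0$ and $\mathbb Q$ are injective, yet $\operatorname{Ext}^1_{\mathrm{CE}}(D^0(\mathbb Z/2),X)\cong\operatorname{Ext}^1_{\mathbb Z}(\mathbb Z/2,\mathbb Z)\neq0$.

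Your earlier resolution-based sketch does not fill the hole either. The disk/sphere decomposition of each CE-injective term is not respected by the differentials of $E^\bullet$. So the long exact sequences of module Ext groups you allude to are never actually produced.

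The missing idea is the paper's. Put $\mathcal L={}^{\perp_1}\mathcal D$ and check, via syzygies of modules in $\mathcal C$, that $(\mathcal L,\mathcal D)$ is a hereditary cotorsion pair with $\mathcal C\subseteq\mathcal L$. Then apply \cite[Theorem 9.4]{En} to get $\operatorname{Ext}^i_{\mathrm{CE}}(F,X)=0$ for $F\in\operatorname{CE}(\mathcal L)\supseteq\operatorname{CE}(\mathcal C)$ and $X\in\operatorname{CE}(\mathcal D)$.

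A self-contained alternative also works. Dimension-shift along a CE-injective resolution of $X$, whose cosyzygies stay in $\operatorname{CE}(\mathcal D)$, to reduce to $i=1$. Then split a CE-exact sequence $0\to X\to Y\to C\to0$ by hand: choose sections first on all boundaries, extend them to the cycles, and finally to the components. Each obstruction is an $\operatorname{Ext}^1_R$ from a module attached to $C$ into one attached to $X$, hence zero.

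Your last inclusion is the paper's argument: spheres and disks with Lemma~\ref{EnExtiso}, then the two short exact sequences. The adjunction detour is unnecessary. So is your restriction to the case where $\mathcal D$ is the right class of a hereditary pair. The class $\mathcal D=\mathcal C^{\perp_{\geq1}}$ is closed under cokernels of monomorphisms for every $\mathcal C$, since in $\operatorname{Ext}^i_R(M,D)\to\operatorname{Ext}^i_R(M,D'')\to\operatorname{Ext}^{i+1}_R(M,D')$ both outer terms vanish for $i\geq1$. As written you only prove that step in the special case, so state it in general.
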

\begin{proof}
The class $\mathcal D$ contains the injectives and is closed under
extensions and cokernels of monomorphisms. Set $\mathcal L={}^{\perp_1}\mathcal D$.
Every syzygy of a module in $\mathcal C$ belongs to $\mathcal L$,
by dimension shifting. It follows that
$\mathcal L^{\perp_1}=\mathcal D$: one inclusion follows from the
definition of $\mathcal L$, and the other by testing against these
syzygies. Thus $(\mathcal L,\mathcal D)$ is a cotorsion pair.
Its right class is coresolving, so it is hereditary. 

Since $\mathcal C\subseteq\mathcal L$, we have
$\operatorname{CE}(\mathcal C)\subseteq\operatorname{CE}(\mathcal L)$.
Applying \cite[Theorem 9.4]{En} to the hereditary pair
$(\mathcal L,\mathcal D)$ therefore gives
\[
\operatorname{Ext}^i_{\mathrm{CE}}(F,X)=0
\quad(F\in\operatorname{CE}(\mathcal C),\
X\in\operatorname{CE}(\mathcal D),\ i\geq1).
\]
Conversely, suppose $X$ is right orthogonal in all positive degrees to
the bounded CE-$\mathcal C$ complexes. These include $S^n(M)$ and
$D^n(M)$ for all $M\in\mathcal C$. Lemma~\ref{EnExtiso} gives
$X_n,Z_nX\in\mathcal D$. The short exact sequences
\[
0\to Z_nX\to X_n\to B_{n-1}X\to0,
\qquad 0\to B_nX\to Z_nX\to H_nX\to0
\]
and the cokernel closure of $\mathcal D$ give $B_nX,H_nX\in\mathcal D$.
Thus $X\in\operatorname{CE}(\mathcal D)$, proving both equalities.
\end{proof}

\begin{Corollary}\label{Cor1}
The CE-strongly FP-injective complexes are precisely
$\operatorname{CE}(\mathsf{SfpInj}(R))$.
\end{Corollary}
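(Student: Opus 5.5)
The plan is to apply Lemma~\ref{lemma} with $\mathcal C$ the class of finitely presented modules together with zero. By the definition recalled at the start of this section, $\mathcal C^{\perp_{\geq1}}=\mathsf{SfpInj}(R)$. Lemma~\ref{lemma} then gives
\[
\operatorname{CE}(\mathsf{SfpInj}(R))
=\bigl(\operatorname{CE}(\mathcal C)^b\bigr)^{\perp_{\geq1,\mathrm{CE}}}.
\]
It remains only to identify $\operatorname{CE}(\mathcal C)^b$ with the class of CE finitely presented complexes.

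By definition, a CE finitely presented complex is bounded, and its components, cycles, boundaries and homology are finitely presented. This is exactly the condition that it lies in $\operatorname{CE}(\mathcal C)$ and is bounded; the zero modules occurring outside the support are finitely presented as well. So the two test classes coincide literally. Consequently, a complex $C$ satisfies $\operatorname{Ext}^i_{\mathrm{CE}}(F,C)=0$ for all CE finitely presented $F$ and all $i\geq1$ if and only if $C\in\operatorname{CE}(\mathsf{SfpInj}(R))$, which is the statement.

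I expect no step to be a genuine obstacle, since the work is already contained in Lemma~\ref{lemma}. The only point to check is the hypothesis of that lemma: $\mathcal C$ must contain zero, which it does because $0$ is finitely presented. The identification $\mathcal C^{\perp_{\geq1}}=\mathsf{SfpInj}(R)$ also needs to be confirmed, and it holds because $\mathsf{SfpInj}(R)$ is defined by the vanishing of $\operatorname{Ext}^i_R(F,-)$ for all finitely presented $F$ and all $i\geq1$. The Ext groups appearing in Lemma~\ref{lemma} are those of $\mathcal E_{\mathrm{CE}}$, which agree with $\overline{\operatorname{Ext}}$ as noted above, so the notion of orthogonality matches the definition of CE-strongly FP-injective.
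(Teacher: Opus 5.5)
Your proposal is correct and follows the paper's proof: apply Lemma~\ref{lemma} with $\mathcal C$ the finitely presented modules, so that $\mathcal D=\mathsf{SfpInj}(R)$. Then observe that the bounded $\operatorname{CE}(\mathcal C)$ complexes are exactly the CE finitely presented ones.
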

\begin{proof}
Apply Lemma~\ref{lemma} with $\mathcal C$ the class of finitely
presented modules. Then $\mathcal D=\mathsf{SfpInj}(R)$, and the bounded
CE-$\mathcal C$ complexes are precisely the CE finitely presented ones.
\end{proof}

\begin{Proposition}\label{prop1}
A complex $X$ is CE-strongly FP-injective if and only if
$X_n,Z_nX\in\mathsf{SfpInj}(R)$ for every $n$. In particular, an acyclic
complex is CE-strongly FP-injective if and only if it is strongly
FP-injective.
\end{Proposition}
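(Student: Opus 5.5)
By Corollary~\ref{Cor1}, a complex $X$ is CE-strongly FP-injective exactly when $X\in\operatorname{CE}(\mathsf{SfpInj}(R))$. That is, $X_n$, $Z_nX$, $B_nX$ and $H_nX$ all lie in $\mathsf{SfpInj}(R)$ for every $n$. So the first assertion reduces to showing that the conditions on components and cycles already force the conditions on boundaries and homology. The forward direction is immediate from the definition of $\operatorname{CE}(\mathsf{SfpInj}(R))$.

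For the converse, assume $X_n,Z_nX\in\mathsf{SfpInj}(R)$ for all $n$. The argument is the one at the end of the proof of Lemma~\ref{lemma}. Proposition~\ref{module-pair} shows that $\mathsf{SfpInj}(R)$ is closed under cokernels of monomorphisms between its objects, since it is coresolving. Applying this to the short exact sequence
\[
0\to Z_nX\to X_n\to B_{n-1}X\to 0
\]
gives $B_{n-1}X\in\mathsf{SfpInj}(R)$ for every $n$. Applying it next to
\[
0\to B_nX\to Z_nX\to H_nX\to 0
\]
gives $H_nX\in\mathsf{SfpInj}(R)$. Hence $X\in\operatorname{CE}(\mathsf{SfpInj}(R))$. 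Alternatively, one can bypass Corollary~\ref{Cor1} and argue directly with Lemma~\ref{EnExtiso} together with Lemma~\ref{lemma}: vanishing against spheres and disks of finitely presented modules is exactly the condition on $Z_nX$ and $X_{n+1}$.

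For the second assertion, let $X$ be acyclic. By Corollary~\ref{sfi-cycles}, $X$ is strongly FP-injective if and only if $Z_nX\in\mathsf{SfpInj}(R)$ for all $n$. If $X$ is CE-strongly FP-injective, the first part gives $Z_nX\in\mathsf{SfpInj}(R)$, so $X$ is strongly FP-injective. Conversely, if the cycles are strongly FP-injective, the sequences $0\to Z_nX\to X_n\to Z_{n-1}X\to 0$ (using acyclicity, $B_{n-1}X=Z_{n-1}X$) together with extension closure of $\mathsf{SfpInj}(R)$ give $X_n\in\mathsf{SfpInj}(R)$. The first part then shows that $X$ is CE-strongly FP-injective.

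There is no serious obstacle here. The only point needing care is to use the right closure property in each direction: closure under cokernels of monomorphisms for boundaries and homology, and closure under extensions for the components of an acyclic complex. Both properties are recorded in the proof of Proposition~\ref{module-pair}.
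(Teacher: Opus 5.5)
Your proposal is correct and follows the paper's proof essentially verbatim. You identify CE-strongly FP-injective complexes with $\operatorname{CE}(\mathsf{SfpInj}(R))$ via Corollary~\ref{Cor1}, apply cokernel closure to $0\to Z_nX\to X_n\to B_{n-1}X\to0$ and $0\to B_nX\to Z_nX\to H_nX\to0$, and settle the acyclic case with Corollary~\ref{sfi-cycles}; your use of extension closure to get $X_n\in\mathsf{SfpInj}(R)$ there just spells out a step the paper leaves implicit.
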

\begin{proof}
The two short exact sequences in the proof of Lemma~\ref{lemma} and
the cokernel closure of $\mathsf{SfpInj}(R)$ show that the component and cycle
conditions imply the boundary and homology conditions. Now apply
Corollary~\ref{Cor1}. For an acyclic complex, use also
Corollary~\ref{sfi-cycles}.
\end{proof}

\begin{Remark}\label{ce-closure}
The class $\operatorname{CE}(\mathsf{SfpInj}(R))$ is closed under products,
direct summands, CE-extensions and cokernels of CE-inflations between
its objects. Indeed, $\mathsf{SfpInj}(R)$ is closed under products and direct
summands and is coresolving; products commute with cycles, and the CE
structure makes the component and cycle sequences exact.
\end{Remark}

Unlike ordinary strongly FP-injective complexes, these complexes may
have nonzero homology. For example, if $M$ is a nonzero strongly
FP-injective module, then $S^0(M)$ is CE-strongly FP-injective but is
not strongly FP-injective in the ordinary sense. As in the ordinary
case, left coherence determines the distinction between FP-injectivity
and its strong version, as the next theorem shows.
\begin{Theorem}\label{ce-coherence}
The following conditions are equivalent:
\begin{enumerate}[label=(\roman*)]
\item $R$ is left coherent.
\item CE-strongly FP-injective complexes and CE-FP-injective complexes coincide.
\item Every directed colimit of CE-strongly FP-injective complexes is
CE-strongly FP-injective.
\item Every directed colimit of CE-injective complexes is CE-FP-injective.
\item For every complex $C$ of left modules, $C$ is CE-strongly
FP-injective if and only if $C^+$ is CE-flat.
\item For every complex $C$ of left modules, $C$ is CE-strongly
FP-injective if and only if $C^{++}$ is CE-strongly FP-injective.
\end{enumerate}
\end{Theorem}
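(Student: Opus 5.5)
The plan is to reduce every condition to a module-theoretic statement through sphere and disk complexes, mirroring the proof of Theorem~\ref{theorem1}. The basic tools are Proposition~\ref{prop1} (membership in $\operatorname{CE}(\mathsf{SfpInj}(R))$ is tested on components and cycles) and its CE-FP-injective analogue. For that analogue we use Lemma~\ref{EnExtiso} with $i=1$: testing $\operatorname{Ext}^1_{\mathrm{CE}}$ against $S^n(F)$ and $D^n(F)$ for $F$ finitely presented, both of which are CE finitely presented, shows that a CE-FP-injective complex has FP-injective components and cycles. Conversely, when the components and cycles are FP-injective, all cycles, boundaries and homology modules are FP-injective; this follows from the sequences in the proof of Lemma~\ref{lemma}, provided we work over a coherent ring when we need cokernel closure. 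We also use \cite[Theorem 4.2]{LGO}: $R$ is left coherent if and only if $\mathsf{SfpInj}(R)$ equals the class of FP-injectives, if and only if $\mathsf{SfpInj}(R)$ is closed under directed colimits, if and only if $M\in\mathsf{SfpInj}(R)\Leftrightarrow M^+$ flat, if and only if $M\in\mathsf{SfpInj}(R)\Leftrightarrow M^{++}\in\mathsf{SfpInj}(R)$.

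\textbf{Forward implications from (i).} Assume $R$ is left coherent. Then FP-injective equals strongly FP-injective, and the FP-injectives are coresolving, so the CE-FP-injective complexes are exactly $\operatorname{CE}(\text{FP-injectives})=\operatorname{CE}(\mathsf{SfpInj}(R))$. For the ``contained in'' direction we invoke \cite{LL}, or argue directly via Lemma~\ref{lemma} using $\operatorname{Ext}^1$ only; this gives (ii). Directed colimits are exact and commute with $Z_n$, and $\mathsf{SfpInj}(R)$ is closed under directed colimits, so Proposition~\ref{prop1} yields (iii). Condition (iv) follows from (iii), since CE-injective complexes lie in $\operatorname{CE}(\mathsf{SfpInj}(R))$ and are hence CE-FP-injective by (ii). For (v) and (vi), the character complex satisfies $(C^+)_n\cong (C_{-n})^+$ up to indexing, and $Z(C^+)$ corresponds to $(C/B)^+$, i.e.\ to cokernels. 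A CE-flat complex is one with flat components, cycles, boundaries and homology; this is the Enochs characterization, which I would cite. Using the coherent-ring module criteria on $C_n$, $Z_nC$, $B_nC$ and $H_nC$, together with exactness of $(-)^+$ (which turns the components, cycles, boundaries and homology of $C$ into those of $C^+$ up to the usual cycle/cokernel swap), we obtain (v). Then (vi) follows, because the same criteria give $M\in\mathsf{SfpInj}(R)\Leftrightarrow M^{++}\in\mathsf{SfpInj}(R)$, applied termwise.

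\textbf{Reverse implications.} Each of (ii)--(vi) is tested on disks $D^0(M)$, whose cycles and boundaries are $M$ or $0$ and whose homology vanishes. Under (ii), for $M$ FP-injective, $D^0(M)$ is CE-FP-injective and hence CE-strongly FP-injective, so $M\in\mathsf{SfpInj}(R)$. Under (iii), disks commute with directed colimits, so $\mathsf{SfpInj}(R)$ is closed under directed colimits. Under (iv), take injective modules $E_i$ with $M=\varinjlim E_i$. Each $D^0(E_i)$ is CE-injective, so $D^0(M)$ is CE-FP-injective and $M$ is FP-injective. Thus directed colimits of injectives are FP-injective, which is the Stenström-type criterion for left coherence; I would cite it alongside \cite{LGO}. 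Under (v) and (vi), argue exactly as in Theorem~\ref{theorem1}: $D^0(M)^+$ is a disk on $M^+$, and the pure-embedding/splitting argument shows every FP-injective module is strongly FP-injective.

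\textbf{Main obstacle.} The hardest step is the forward direction for (v). It needs careful bookkeeping of how $(-)^+$ acts on cycles, boundaries and homology, and the correct definition of CE-flat. I would first establish $Z_n(C^+)\cong (C_{-n}/B_{-n}C)^+$ and $H_n(C^+)\cong (H_{-n}C)^+$. Then over a left coherent ring, $C_{-n}/B_{-n}C$ is FP-injective whenever $C\in\operatorname{CE}(\mathsf{SfpInj}(R))$, because it is an extension of $B_{-n-1}C$ by $H_{-n}C$.
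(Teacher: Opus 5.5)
Your proof is correct and has the same skeleton as the paper's. Each condition is reduced to a module-theoretic criterion by testing on a single sphere or disk, and you conclude with \cite[Theorem 4.2]{LGO}, the colimit criterion of Theorem~\ref{sfpinj-directed-characterization} for $\lambda=\aleph_0$, and the pure-embedding argument of Theorem~\ref{theorem1}.

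The differences are in execution. The paper tests on stalks $S^0(M)$, routes (iii) through (iv), and obtains the forward implications (ii), (v), (vi) by citing \cite[Theorems 4.4 and 5.1]{LL}. You test on disks $D^0(M)$ and go from (iii) to (i) directly. You also prove the forward implications by hand:
\begin{itemize}
\item (ii) from Lemma~\ref{EnExtiso} with $i=1$, cokernel closure and Proposition~\ref{prop1};
\item (v) and (vi) from $Z_n(C^+)\cong(C_{-n}/B_{-n}C)^+$, $B_n(C^+)\cong(B_{-n-1}C)^+$ and $H_n(C^+)\cong(H_{-n}C)^+$, hence $Z_n(C^{++})\cong(Z_nC)^{++}$ and so on, together with the sequences $0\to H_nC\to C_n/B_nC\to B_{n-1}C\to0$ and $0\to B_nC\to Z_nC\to H_nC\to0$ to pass between the two sets of invariants.
\end{itemize}
This gives a self-contained argument that makes the cycle/cokernel swap under $(-)^+$ explicit. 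The cost is bookkeeping that the paper outsources to \cite{LL}.

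One step you leave unjustified: in (ii)$\Rightarrow$(i), that $D^0(M)$ is CE-FP-injective for FP-injective $M$. Outside the coherent case this is not covered by Lemma~\ref{lemma}. It is immediate, however, from the adjunction $\operatorname{Hom}_{\operatorname{Ch}(R)}(X,D^0(N))\cong\operatorname{Hom}_R(X_0,N)$. Apply it to the CE-injective resolution $D^0(I^\bullet)$ of $D^0(M)$ to get $\operatorname{Ext}^1_{\mathrm{CE}}(F,D^0(M))\cong\operatorname{Ext}^1_R(F_0,M)$, which vanishes since $F_0$ is finitely presented. This plays the role of the paper's citation of \cite[Example 4.3]{LL} for stalks.

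Also, in deducing (iv) from (iii) you do not need (ii). CE-strongly FP-injective complexes are CE-FP-injective by definition.
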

\begin{proof}
Under (i), the module classes of FP-injectives and strongly
FP-injectives coincide. Proposition~\ref{prop1} and
\cite[Theorem 4.4]{LL} prove (ii). Conversely, applying (ii) to stalks
gives equality of the two module classes, hence (i) by
\cite[Theorem 4.2]{LGO}; stalks of FP-injective modules are CE-FP-injective
by \cite[Example 4.3]{LL}.

Under (i), $\mathsf{SfpInj}(R)$ is closed under directed colimits, and cycles
commute with these colimits. Proposition~\ref{prop1} gives (iii).
Condition (iii) implies (iv), since CE-injectives belong to
$\operatorname{CE}(\mathsf{SfpInj}(R))$ and CE-strongly FP-injective complexes
are CE-FP-injective. Applying (iv) to directed systems of stalks of
injective modules gives the module colimit criterion for coherence
used in \cite[Theorem 4.2]{LGO}, so (iv) implies (i).

Under (i), condition (ii) and \cite[Theorem 5.1]{LL} give (v) and (vi).
Conversely, apply (v) or (vi) to every stalk $S^0(M)$. Character duals
of stalks are stalks of character modules, up to shift. CE-flatness of
a stalk is flatness of its module, and CE-strong FP-injectivity of a
stalk is membership in $\mathsf{SfpInj}(R)$. We obtain the corresponding
module criteria appearing in the proof of Theorem~\ref{theorem1}.
The pure-embedding argument given there shows that either criterion
forces all FP-injective modules to be strongly FP-injective, hence (i).
\end{proof}
We now return to approximations. Having identified the CE-strongly
FP-injective complexes, we can compute the CE right orthogonal of the
same sphere and disk set used in the ordinary case.
\begin{Theorem}\label{ce-complete}
For any ring $R$, the pair
\[
\bigl(\operatorname{CE}(\mathsf{WfpProj}(R)),\operatorname{CE}(\mathsf{SfpInj}(R))\bigr)
\]
is a functorially complete hereditary cotorsion pair
in $(\operatorname{Ch}(R),\mathcal E_{\mathrm{CE}})$. More precisely,
\[
\mathcal U^{\perp_{1,\mathrm{CE}}}
=\mathcal U'^{\perp_{1,\mathrm{CE}}}
=\operatorname{CE}(\mathsf{SfpInj}(R)),\qquad
\operatorname{CE}(\mathsf{WfpProj}(R))
=\operatorname{Retr}\operatorname{Filt}_{\mathrm{CE}}(\mathcal U').
\]
The filtrations in the last expression are continuous transfinite
filtrations whose successor sequences are CE-exact and whose successive
quotients belong to $\mathcal U'$.
\end{Theorem}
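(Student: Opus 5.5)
We follow the same pattern as Proposition~\ref{ordinary-set}, now working in the exact category $(\operatorname{Ch}(R),\mathcal E_{\mathrm{CE}})$.

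\textbf{Step 1: the right orthogonal.} First compute $\mathcal U^{\perp_{1,\mathrm{CE}}}$. By Lemma~\ref{EnExtiso}, for $T\in\mathcal T$ we have
\[
\operatorname{Ext}^1_{\mathrm{CE}}(S^n(T),X)\cong\operatorname{Ext}^1_R(T,Z_nX),
\qquad
\operatorname{Ext}^1_{\mathrm{CE}}(D^n(T),X)\cong\operatorname{Ext}^1_R(T,X_{n+1}).
\]
Hence $X\in\mathcal U^{\perp_{1,\mathrm{CE}}}$ if and only if $Z_nX,X_n\in\mathcal T^{\perp_1}=\mathsf{SfpInj}(R)$ for all $n$. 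By Proposition~\ref{module-pair} and Proposition~\ref{prop1}, this is exactly $X\in\operatorname{CE}(\mathsf{SfpInj}(R))$. Note that, unlike the ordinary case, no acyclicity step is needed.

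\textbf{Step 2: adjoining $G$.} Adjoining $G$ does not change the orthogonal. The direct sum of the CE-projective resolutions of the summands $S^n(R)$ and $D^n(R)$ is again a CE-projective resolution, since coproducts of CE-exact sequences are CE-exact and coproducts of CE-projectives are CE-projective. Applying $\operatorname{Hom}$ into $X$ turns this sum into a product, so $\operatorname{Ext}^1_{\mathrm{CE}}(G,X)$ is the product of the individual Ext groups. In fact each of these vanishes for every $X$, because $S^n(R)$ and $D^n(R)$ are CE-projective.

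\textbf{Step 3: $G$ is a generator.} We must check that $G$ generates for the CE structure: every complex $Y$ must admit a CE-deflation from a coproduct of copies of $G$. Choose free modules mapping onto $Y_n$ and onto $Z_nY$, and form the induced map
\[
\bigoplus_n\bigl(D^{n-1}(F_n)\oplus S^n(F'_n)\bigr)\longrightarrow Y,
\]
where $F_n\to Y_n$ and $F'_n\to Z_nY$ are the chosen free epimorphisms. This map is surjective in each degree and on cycles, so it is a CE-deflation by \cite[Lemma 5.2]{En}. Its domain is a coproduct of summands of $G$, up to multiplicity. Also $G\in\operatorname{Filt}_{\mathrm{CE}}(\mathcal U')$ trivially.

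\textbf{Step 4: functorial completeness and the filtration description.} The CE exact structure is efficient by \cite[Proposition 3.5]{EIW}. Applying \cite[Theorem 5.16]{StovicekExact} to the set $\mathcal U'$ then yields a functorially complete cotorsion pair
\[
\Bigl({}^{\perp_{1,\mathrm{CE}}}\bigl(\operatorname{CE}(\mathsf{SfpInj}(R))\bigr),\ \operatorname{CE}(\mathsf{SfpInj}(R))\Bigr),
\]
whose left class is $\operatorname{Retr}\operatorname{Filt}_{\mathrm{CE}}(\mathcal U')$.

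\textbf{Step 5: identifying the left class.} It remains to show
\[
{}^{\perp_{1,\mathrm{CE}}}\operatorname{CE}(\mathsf{SfpInj}(R))=\operatorname{CE}(\mathsf{WfpProj}(R)).
\]
I expect this to be the main obstacle. The inclusion $\supseteq$ follows from \cite[Theorem 9.4]{En} applied to the hereditary pair of Proposition~\ref{module-pair}; this also gives all higher CE Ext vanishing, hence hereditarity once the equality is known.

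For $\subseteq$, take $A$ in the left orthogonal and test against $S^n(J)$ and $D^n(J)$ for $J\in\mathsf{SfpInj}(R)$; these lie in $\operatorname{CE}(\mathsf{SfpInj}(R))$ by Proposition~\ref{prop1}. The first route is to use the duals of Lemma~\ref{EnExtiso}, computed via CE-injective resolutions, which are $S^n$ and $D^n$ of injective resolutions. One expects
\[
\operatorname{Ext}^1_{\mathrm{CE}}(A,S^n(J))\cong\operatorname{Ext}^1_R(A_n/B_nA,J),
\qquad
\operatorname{Ext}^1_{\mathrm{CE}}(A,D^n(J))\cong\operatorname{Ext}^1_R(A_n,J),
\]
up to indexing conventions. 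These give $A_n\in\mathsf{WfpProj}(R)$ and $C_nA:=A_n/B_nA\in\mathsf{WfpProj}(R)$.

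Next use the sequences
\[
0\to H_nA\to C_nA\to B_{n-1}A\to0,\qquad
0\to B_nA\to Z_nA\to H_nA\to0,\qquad
0\to Z_nA\to A_n\to B_{n-1}A\to0,
\]
together with the fact that $\mathsf{WfpProj}(R)$ is resolving (closed under kernels of epimorphisms), which holds since the pair is hereditary. The hope is to propagate membership to $B$, $H$, and $Z$, but $B_{n-1}A$ is not obviously in the class. A cleaner alternative is to invoke the equivalence of CE cotorsion pairs in \cite[Theorem 3.9]{YangLiangCE}, which already asserts that the left class of this pair is $\operatorname{CE}(\mathsf{WfpProj}(R))$; uniqueness of the left orthogonal then finishes the proof.
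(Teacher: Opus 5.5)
Your proposal is correct and follows essentially the same route as the paper. You compute the right orthogonal via Lemma~\ref{EnExtiso} and Proposition~\ref{prop1}, use $G$ as a CE-projective generator, and apply efficiency together with \cite[Theorem 5.16]{StovicekExact}. For the generator, the paper also notes that your domain is a summand of some $G^{(I)}$ and extends by zero to get a deflation from copies of $G$ itself. The left class is identified by citing the CE cotorsion pair of \cite[Theorem 9.4]{En}, which is the same kind of citation as your fallback to \cite[Theorem 3.9]{YangLiangCE}.

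Your direct attempt at $\subseteq$ actually closes, so that citation is only needed for $\supseteq$. The sequence $0\to B_nA\to A_n\to C_nA\to0$ and the resolving property of $\mathsf{WfpProj}(R)$ give $B_nA\in\mathsf{WfpProj}(R)$ for all $n$. Your other two sequences then yield $Z_nA,H_nA\in\mathsf{WfpProj}(R)$.
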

\begin{proof}
Proposition~\ref{module-pair} and \cite[Theorem 9.4]{En} give the
hereditary cotorsion pair. By Lemma~\ref{EnExtiso}, orthogonality to
$\mathcal U$ means exactly
\[
\operatorname{Ext}^1_R(T,Z_nX)=0
=\operatorname{Ext}^1_R(T,X_{n+1})
\quad(T\in\mathcal T,\ n\in\mathbb Z).
\]
Since $\mathcal T^{\perp_1}=\mathsf{SfpInj}(R)$, Proposition~\ref{prop1}
identifies this orthogonal with $\operatorname{CE}(\mathsf{SfpInj}(R))$.

The object $G$ of Proposition~\ref{ordinary-set} is CE-projective:
each of its sphere and disk summands is CE-projective, and coproducts
of projective objects remain projective. We also verify that $G$ is
a generator for this exact structure. Choose free epimorphisms $F_n\to X_n$ and $Q_n\to Z_nX$ for every $n$.
By the disk and sphere adjunctions, they define a chain map
\[
\bigoplus_{n\in\mathbb Z}
\bigl(D^{n-1}(F_n)\oplus S^n(Q_n)\bigr)\longrightarrow X.
\]
The disk maps make it surjective in every degree, and the sphere maps
make it surjective on cycles. Its kernel sequence is therefore
CE-exact. The domain is a coproduct of spheres and disks of $R$,
hence is a direct summand of a sufficiently large coproduct of copies
of $G$. Adding the complementary summand, mapped to zero, gives a
CE-deflation from that coproduct itself.

Adjoining the CE-projective object $G$ to $\mathcal U$ does not change
the relative orthogonal. Hence $\operatorname{Filt}_{\mathrm{CE}}
(\mathcal U')$ contains the generator $G$. Together with the
efficiency recalled at the beginning of the section, this verifies
the hypotheses of \cite[Theorem 5.16]{StovicekExact}. Its conclusion
applies to the pair
\[
\bigl({}^{\perp_{1,\mathrm{CE}}}
(\mathcal U'^{\perp_{1,\mathrm{CE}}}),
\mathcal U'^{\perp_{1,\mathrm{CE}}}\bigr).
\]
The right class is $\operatorname{CE}(\mathsf{SfpInj}(R))$ by the computation
above, and the left class is $\operatorname{CE}(\mathsf{WfpProj}(R))$ by the
cotorsion-pair equality established at the start of the proof.
We obtain functorial completeness and the claimed description by
retracts of CE-filtered objects.
\end{proof}
CE-exactness makes these approximations compatible with all four
module families attached to a complex. In particular, the induced
maps on homology are themselves special module approximations.
\begin{Corollary}\label{ce-approximations}
Every complex $X$ admits functorially chosen CE-exact sequences
\[
0\longrightarrow V_X\longrightarrow U_X\longrightarrow X\longrightarrow0,
\qquad
0\longrightarrow X\longrightarrow V^X\longrightarrow U^X\longrightarrow0,
\]
where $U_X,U^X\in\operatorname{CE}(\mathsf{WfpProj}(R))$ and
$V_X,V^X\in\operatorname{CE}(\mathsf{SfpInj}(R))$. For every $n$, applying any
of the functors $(-)_n$, $Z_n$, $B_n$ and $H_n$ gives, respectively,
a special $\mathsf{WfpProj}(R)$-precover sequence and a special
$\mathsf{SfpInj}(R)$-preenvelope sequence in $R$-Mod.
\end{Corollary}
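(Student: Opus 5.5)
The plan is to read off the two sequences directly from Theorem~\ref{ce-complete}, which gives a functorially complete cotorsion pair $\bigl(\operatorname{CE}(\mathsf{WfpProj}(R)),\operatorname{CE}(\mathsf{SfpInj}(R))\bigr)$ in the exact category $(\operatorname{Ch}(R),\mathcal E_{\mathrm{CE}})$. By the definition of functorial completeness recalled at the beginning of Section 3, every complex $X$ admits functorially chosen conflations
\[
0\to V_X\to U_X\to X\to0,\qquad 0\to X\to V^X\to U^X\to0
\]
with $U_X,U^X$ in the left class and $V_X,V^X$ in the right class. In this exact structure, conflations are precisely the CE-exact sequences. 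This settles the first assertion without further work.

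For the second assertion, I will fix $n$ and use the characterization of CE-exact sequences recalled before Lemma~\ref{EnExtiso}. Such a sequence is exact degreewise and on cycles, boundaries and homology. Applying any of $(-)_n$, $Z_n$, $B_n$, $H_n$ therefore gives a short exact sequence of modules. By the definition of the classes $\operatorname{CE}(\mathcal X)$, the image of $U_X$ lies in $\mathsf{WfpProj}(R)$, and the image of $V_X$ lies in $\mathsf{SfpInj}(R)$; the same holds for $U^X$ and $V^X$.

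It then remains to check that a short exact sequence $0\to B\to A\to M\to0$ with $A\in\mathsf{WfpProj}(R)$ and $B\in\mathsf{SfpInj}(R)$ is a special precover sequence, and dually for the preenvelope. This is exactly the definition of special approximation in the module category. The $\operatorname{Ext}^1$ vanishing from Proposition~\ref{module-pair} gives the lifting property: apply $\operatorname{Hom}_R(A',-)$ with $A'\in\mathsf{WfpProj}(R)$, and use $\operatorname{Ext}^1_R(A',B)=0$. The preenvelope case is symmetric, using $\operatorname{Hom}_R(-,B')$ for $B'\in\mathsf{SfpInj}(R)$.

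The only point that needs care is the exactness of the four functors on CE-conflations, in particular on $B_n$ and $H_n$. Since I take it from \cite[Lemma 5.2]{En} as recalled in the text, I do not expect a genuine obstacle.
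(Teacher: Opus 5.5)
Your proposal is correct and follows essentially the same route as the paper. Both take the functorially chosen conflations from Theorem~\ref{ce-complete}, use CE-exactness to get exactness after $(-)_n$, $Z_n$, $B_n$, $H_n$, and use the definition of the CE classes to place the terms in $\mathsf{WfpProj}(R)$ and $\mathsf{SfpInj}(R)$. The $\operatorname{Ext}^1$-vanishing of the module cotorsion pair then gives the precover and preenvelope properties. The paper additionally remarks that, since these four functors are themselves functorial, the resulting module approximation sequences are also functorial in $X$.
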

\begin{proof}
The sequences are provided by Theorem~\ref{ce-complete}. CE-exactness
ensures exactness after each of the four functors. The definition of
the CE classes places the resulting terms in $\mathsf{WfpProj}(R)$ and
$\mathsf{SfpInj}(R)$. For example,
\[
0\to H_n(V_X)\to H_n(U_X)\to H_n(X)\to0
\]
has middle term in $\mathsf{WfpProj}(R)$ and kernel in $\mathsf{SfpInj}(R)$.
Applying $\operatorname{Hom}_R(A,-)$, for $A\in\mathsf{WfpProj}(R)$, and using
$\operatorname{Ext}^1_R(A,H_n(V_X))=0$ proves the precover property.
For the second sequence, apply $\operatorname{Hom}_R(-,B)$ for
$B\in\mathsf{SfpInj}(R)$ and use
$\operatorname{Ext}^1_R(H_n(U^X),B)=0$. The other three functors give
the same argument. Their functoriality also preserves the functorial
choice of these approximation sequences.
\end{proof}

The ordinary and CE constructions use the same set $\mathcal U'$,
but give different right orthogonals:
\[
\mathcal U'^{\perp_{1,\operatorname{Ch}(R)}}
=\widetilde{\mathsf{SfpInj}}(R),
\qquad
\mathcal U'^{\perp_{1,\mathrm{CE}}}
=\operatorname{CE}(\mathsf{SfpInj}(R)).
\]
In the ordinary structure, the tests $S^n(R)$ force acyclicity. In the
CE structure these spheres are projective, and the remaining tests
impose the component and cycle conditions of
Proposition~\ref{prop1}.

This difference is reflected in the approximations. The ordinary
strongly FP-injective preenvelope of
Corollary~\ref{ordinary-approximations} has acyclic target, so its
sequence cannot be CE-exact when the source has nonzero homology.
The CE preenvelope instead induces a special preenvelope on each
homology module. The comparison simplifies in the case of acyclic complexes. To see that, we start by stating a lemma that follows from \cite[Proposition 10.1]{En} and \cite[Lemma 2.10]{LuLiuCE}.

\begin{Lemma}\label{ce-ordinary-comparison}
If either $X$ or $Y$ is acyclic, the natural map
\[
\operatorname{Ext}^1_{\mathrm{CE}}(X,Y)
\longrightarrow\operatorname{Ext}^1_{\operatorname{Ch}(R)}(X,Y)
\]
is an isomorphism.
\end{Lemma}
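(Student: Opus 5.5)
The plan is to work with Yoneda extensions directly. $\operatorname{Ext}^1_{\mathrm{CE}}(X,Y)$ is the group of classes of CE-exact sequences $0\to Y\to E\to X\to0$. The natural map sends such a class to the same sequence, regarded as an ordinary short exact sequence. This map is additive (Baer sums are computed by the same pullbacks and pushouts in both structures), and it is injective in any case. Indeed, a CE-exact sequence that splits in $\operatorname{Ch}(R)$ splits by a chain map, and split sequences are trivially CE-exact, so its CE class is zero. It therefore remains only to prove surjectivity. That amounts to showing that every short exact sequence of complexes $0\to Y\to E\to X\to0$ is automatically CE-exact when $X$ or $Y$ is acyclic.

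By \cite[Lemma 5.2]{En}, as recalled at the start of this subsection, CE-exactness is equivalent to degreewise exactness together with exactness on cycles. Degreewise exactness is given. The sequence $0\to Z_nY\to Z_nE\to Z_nX$ is always left exact, so the only issue is surjectivity of $Z_nE\to Z_nX$. Apply the long exact homology sequence
\[
\cdots\to H_n(E)\to H_n(X)\xrightarrow{\ \delta\ }H_{n-1}(Y)\to\cdots .
\]
A cycle $x\in Z_nX$ lifts to some $e\in E_n$, and $d e$ lies in $Z_{n-1}Y$ with class $\delta[x]$. If $\delta=0$, then $de=dy'$ for some $y'\in Y_n$, and $e-y'$ is a cycle lifting $x$. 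Conversely, surjectivity on cycles forces $\delta=0$. So we have the criterion: \emph{a short exact sequence of complexes is CE-exact if and only if its connecting morphism vanishes}.

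When $X$ is acyclic, $H_n(X)=0$ and $\delta=0$ trivially. When $Y$ is acyclic, $H_{n-1}(Y)=0$ and again $\delta=0$. In both cases every ordinary extension is CE-exact, which gives surjectivity. Combined with injectivity, this proves the isomorphism. This is essentially the content of \cite[Proposition 10.1]{En} and \cite[Lemma 2.10]{LuLiuCE}, and I would cite them for the identification of $\operatorname{Ext}^1_{\mathrm{CE}}$ with Yoneda classes of CE-exact sequences.

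The main obstacle I expect is justifying that Yoneda $\operatorname{Ext}^1$ in the exact category $\mathcal E_{\mathrm{CE}}$ agrees with the relative derived functor $\overline{\operatorname{Ext}}^1$, and that the comparison map is the obvious one. The paper has recorded this agreement above, using the enough CE-projectives from \cite{En}. Given that, the rest is the diagram chase on cycles described in the previous paragraph.
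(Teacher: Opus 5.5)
Your proof is correct, and it is more direct than the paper, which gives no argument for this lemma and simply defers to \cite[Proposition 10.1]{En} and \cite[Lemma 2.10]{LuLiuCE}.

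The paper already defines $\operatorname{Ext}^1_{\mathrm{CE}}$ as Yoneda Ext in $(\operatorname{Ch}(R),\mathcal E_{\mathrm{CE}})$, so the step you flag as the main obstacle is settled by that setup. The natural map is just the forgetful one. Injectivity is even more immediate than via additivity, because an ordinary equivalence between two CE-conflations is itself a morphism of conflations. Your key criterion, that a short exact sequence of complexes is CE-exact exactly when its homology connecting maps vanish, is the same cycle-lifting chase the paper uses later in the proof of Proposition~\ref{dg-derived-split}(i).

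Your approach buys transparency. The comparison map is injective for all $X,Y$, its image is exactly the extensions with vanishing connecting homomorphism, and acyclicity of either end is visibly just one way to force that.

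A resolution-level argument would instead give agreement in all degrees. For acyclic $X$, free epimorphisms $F_n\to X_n$ induce a surjection $\bigoplus_n D^{n-1}(F_n)\to X$. Its kernel is acyclic, and the sequence is CE-exact by your criterion. Iterating gives a resolution that is at once an ordinary projective resolution and a CE-projective one, so $\overline{\operatorname{Ext}}^{\,i}(X,Y)\cong\operatorname{Ext}^i_{\operatorname{Ch}(R)}(X,Y)$ for all $i\geq1$. The dual argument with injective complexes handles acyclic $Y$. Your Yoneda argument covers only $i=1$, but that is all the lemma asserts.
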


\begin{Proposition}\label{ce-dg-comparison}
There are inclusions
\[
\begin{gathered}
\widetilde{\mathsf{WfpProj}}(R)\subseteq\operatorname{CE}(\mathsf{WfpProj}(R))
\subseteq\operatorname{dg}\mathsf{WfpProj}(R),\\
\widetilde{\mathsf{SfpInj}}(R)\subseteq\operatorname{CE}(\mathsf{SfpInj}(R))
\subseteq\operatorname{dg}\mathsf{SfpInj}(R).
\end{gathered}
\]
Moreover,
\[
\operatorname{CE}(\mathsf{WfpProj}(R))\cap\mathsf{Ex}=\widetilde{\mathsf{WfpProj}}(R),
\qquad
\operatorname{CE}(\mathsf{SfpInj}(R))\cap\mathsf{Ex}=\widetilde{\mathsf{SfpInj}}(R).
\]
\end{Proposition}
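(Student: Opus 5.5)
The plan is to prove the four inclusions first, and then read off the two intersection equalities from them together with Theorem~\ref{induced-pairs}.

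\emph{Left-hand inclusions.} Let $X\in\widetilde{\mathsf{WfpProj}}(R)$. Then $X$ is acyclic, so $H_nX=0$ lies in $\mathsf{WfpProj}(R)$, and $B_nX=Z_nX$. The sequences $0\to Z_nX\to X_n\to Z_{n-1}X\to0$ and the extension closure of $\mathsf{WfpProj}(R)$, which holds because it is a left class of a cotorsion pair, give $X_n\in\mathsf{WfpProj}(R)$. Hence $X\in\operatorname{CE}(\mathsf{WfpProj}(R))$. The dual argument, using extension closure of $\mathsf{SfpInj}(R)$, gives $\widetilde{\mathsf{SfpInj}}(R)\subseteq\operatorname{CE}(\mathsf{SfpInj}(R))$; alternatively, this is already in Proposition~\ref{prop1}.

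\emph{Right-hand inclusions.} Here I work with orthogonality and use Lemma~\ref{ce-ordinary-comparison}. Let $P\in\operatorname{CE}(\mathsf{WfpProj}(R))$ and $B\in\widetilde{\mathsf{SfpInj}}(R)$. Since $B$ is acyclic, Lemma~\ref{ce-ordinary-comparison} gives
\[
\operatorname{Ext}^1_{\operatorname{Ch}(R)}(P,B)\cong\operatorname{Ext}^1_{\mathrm{CE}}(P,B).
\]
The left inclusion just proved places $B$ in $\operatorname{CE}(\mathsf{SfpInj}(R))$, so Theorem~\ref{ce-complete} makes this group vanish. Therefore $P\in{}^{\perp_1}\widetilde{\mathsf{SfpInj}}(R)$, and this class equals $\operatorname{dg}\mathsf{WfpProj}(R)$ because the first pair of Theorem~\ref{induced-pairs} is a cotorsion pair.

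Symmetrically, let $I\in\operatorname{CE}(\mathsf{SfpInj}(R))$ and $A\in\widetilde{\mathsf{WfpProj}}(R)$. Now $A$ is acyclic, so the same lemma and Theorem~\ref{ce-complete} give $\operatorname{Ext}^1_{\operatorname{Ch}(R)}(A,I)=0$. Hence $I\in\widetilde{\mathsf{WfpProj}}(R)^{\perp_1}=\operatorname{dg}\mathsf{SfpInj}(R)$.

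The only delicate point is applying Lemma~\ref{ce-ordinary-comparison} correctly: in each case the acyclic argument is the test object coming from the tilde class, which is exactly the situation the lemma covers. If one prefers to avoid the lemma, a fallback is available. Use the component conditions in the definition of the dg classes, and note that a chain map from a CE-$\mathsf{WfpProj}$ complex to an acyclic complex with $\mathsf{SfpInj}$ cycles is null-homotopic. This can be shown by constructing the homotopy degreewise, using $\operatorname{Ext}^1_R(\mathsf{WfpProj},\mathsf{SfpInj})=0$ on the sequences of cycles and boundaries. That argument is more laborious, so the orthogonality route is the one to use.

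\emph{Intersection equalities.} For the first, $\widetilde{\mathsf{WfpProj}}(R)\subseteq\operatorname{CE}(\mathsf{WfpProj}(R))\cap\mathsf{Ex}$ by the left inclusion, and tilde complexes are acyclic by definition. Conversely,
\[
\operatorname{CE}(\mathsf{WfpProj}(R))\cap\mathsf{Ex}\subseteq\operatorname{dg}\mathsf{WfpProj}(R)\cap\mathsf{Ex}=\widetilde{\mathsf{WfpProj}}(R)
\]
by the right inclusion and Theorem~\ref{induced-pairs}. This can also be checked directly: an acyclic CE-$\mathsf{WfpProj}$ complex has cycles in $\mathsf{WfpProj}(R)$. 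The same sandwich argument gives the equality for $\mathsf{SfpInj}$, which is also the last assertion of Proposition~\ref{prop1}.
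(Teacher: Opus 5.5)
Your proposal is correct and follows the paper's proof. The left inclusions come from acyclicity ($B_nX=Z_nX$, $H_nX=0$) and extension closure. The right inclusions come from Theorem~\ref{ce-complete} combined with Lemma~\ref{ce-ordinary-comparison}, applied with the acyclic test object from the tilde class, and then the orthogonality descriptions in Theorem~\ref{induced-pairs}.

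The only cosmetic difference is in the intersection equalities. You get them by sandwiching with $\operatorname{dg}\mathsf{WfpProj}(R)\cap\mathsf{Ex}=\widetilde{\mathsf{WfpProj}}(R)$ and its dual, whereas the paper reads them off directly from the definitions, as you also note.
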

\begin{proof}
The first inclusions and the equalities follow from the definitions,
extension closure of both module classes, and $B_nX=Z_nX$, $H_nX=0$
for an acyclic $X$. If $P\in\operatorname{CE}(\mathsf{WfpProj}(R))$ and
$B\in\widetilde{\mathsf{SfpInj}}(R)$, Theorem~\ref{ce-complete} and
Lemma~\ref{ce-ordinary-comparison} give
$\operatorname{Ext}^1_{\operatorname{Ch}(R)}(P,B)=0$. Hence
$P\in{}^{\perp_1}\widetilde{\mathsf{SfpInj}}(R)=\operatorname{dg}\mathsf{WfpProj}(R)$.
Dually, for $A\in\widetilde{\mathsf{WfpProj}}(R)$ and
$I\in\operatorname{CE}(\mathsf{SfpInj}(R))$ the same lemma gives ordinary
first-Ext vanishing, so
$I\in\widetilde{\mathsf{WfpProj}}(R)^{\perp_1}=\operatorname{dg}\mathsf{SfpInj}(R)$.
\end{proof}

\begin{Corollary}\label{acyclic-pair}
The pair $(\widetilde{\mathsf{WfpProj}}(R),\widetilde{\mathsf{SfpInj}}(R))$ is a
complete hereditary cotorsion pair in the exact category $\mathsf{Ex}$
with the short exact sequences inherited from $\operatorname{Ch}(R)$.
This inherited exact structure is also the restriction of
$\mathcal E_{\mathrm{CE}}$.
\end{Corollary}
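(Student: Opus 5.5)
We begin with the second assertion, since the Ext computations depend on it. $\mathsf{Ex}$ is closed under extensions in $\operatorname{Ch}(R)$: the long exact homology sequence shows that the middle term of a short exact sequence of acyclic complexes is acyclic. So $\mathsf{Ex}$ inherits an exact structure. We claim that every short exact sequence $0\to X\to Y\to W\to0$ of acyclic complexes is CE-exact. By \cite[Lemma 5.2]{En}, it suffices to check exactness on cycles. Surjectivity of $Z_nY\to Z_nW$ follows because $X$ is acyclic: for a cycle $w$, lift it to $y$; then $dy\in X_{n-1}$ is a cycle, hence a boundary $dx$, and $y-x$ is a cycle lifting $w$. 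Exactness on the left and in the middle is automatic. Hence the two exact structures agree on $\mathsf{Ex}$. By Lemma~\ref{ce-ordinary-comparison}, this also identifies $\operatorname{Ext}^1_{\mathsf{Ex}}$ with both $\operatorname{Ext}^1_{\operatorname{Ch}(R)}$ and $\operatorname{Ext}^1_{\mathrm{CE}}$ on acyclic objects, since Yoneda $\operatorname{Ext}^1$ in an extension-closed subcategory is computed by the same extensions.

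Next, orthogonality. If $A\in\widetilde{\mathsf{WfpProj}}(R)$ and $B\in\widetilde{\mathsf{SfpInj}}(R)$, then $A$ lies in $\operatorname{dg}\mathsf{WfpProj}(R)$ by Theorem~\ref{induced-pairs}, so $\operatorname{Ext}^1(A,B)=0$. For the equalities of orthogonals inside $\mathsf{Ex}$, let $X\in\mathsf{Ex}$ with $\operatorname{Ext}^1(A,X)=0$ for all $A\in\widetilde{\mathsf{WfpProj}}(R)$. Then $X\in\operatorname{dg}\mathsf{SfpInj}(R)\cap\mathsf{Ex}=\widetilde{\mathsf{SfpInj}}(R)$ by Theorem~\ref{induced-pairs}. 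Dually, if $X\in\mathsf{Ex}$ is left orthogonal to $\widetilde{\mathsf{SfpInj}}(R)$, then $X\in\operatorname{dg}\mathsf{WfpProj}(R)\cap\mathsf{Ex}=\widetilde{\mathsf{WfpProj}}(R)$. So the pair is a cotorsion pair in $\mathsf{Ex}$.

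For completeness, take $X\in\mathsf{Ex}$.
\begin{itemize}
\item For the precover, use the first sequence of Corollary~\ref{ordinary-approximations}, $0\to B_X\to P_X\to X\to0$. Since $B_X$ and $X$ are acyclic, $P_X$ is acyclic, so $P_X\in\operatorname{dg}\mathsf{WfpProj}(R)\cap\mathsf{Ex}=\widetilde{\mathsf{WfpProj}}(R)$.
\item For the preenvelope, use the third sequence, $0\to X\to I_X\to A_X\to0$. Here $A_X$ and $X$ are acyclic, so $I_X\in\operatorname{dg}\mathsf{SfpInj}(R)\cap\mathsf{Ex}=\widetilde{\mathsf{SfpInj}}(R)$.
\end{itemize}
Both sequences lie in $\mathsf{Ex}$, so the pair is complete.

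For hereditarity, the cleanest route is that $\widetilde{\mathsf{SfpInj}}(R)$ is closed under cokernels of inflations in $\mathsf{Ex}$. Given $0\to B\to B'\to C\to0$ with $B,B'\in\widetilde{\mathsf{SfpInj}}(R)$, the sequence is exact on cycles by the CE argument above. Since $\mathsf{SfpInj}(R)$ is coresolving, $Z_nC\in\mathsf{SfpInj}(R)$, and $C$ is acyclic. Dually, $\widetilde{\mathsf{WfpProj}}(R)$ is closed under kernels of deflations. Then higher Ext in $\mathsf{Ex}$ vanishes by dimension shifting, using the complete pair to build coresolutions inside $\mathsf{Ex}$. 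Alternatively, one can invoke the hereditarity in $\operatorname{Ch}(R)$ from Theorem~\ref{induced-pairs}, but that requires comparing higher Ext groups, which I would avoid.

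The main obstacle is the exactness-on-cycles step, and stating precisely that Ext in the extension-closed subcategory $\mathsf{Ex}$ agrees with the ambient Ext. Everything else reduces to Theorem~\ref{induced-pairs} and Corollary~\ref{ordinary-approximations}.
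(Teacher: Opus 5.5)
Your proof is correct. For orthogonality and completeness it is the same as the paper's: first Ext in the extension-closed subcategory $\mathsf{Ex}$ is the ambient one, the compatibility equalities of Theorem~\ref{induced-pairs} identify the two orthogonals, and the first and third sequences of Corollary~\ref{ordinary-approximations} have acyclic middle terms.

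You depart from the paper in two places.
\begin{enumerate}
\item For the CE assertion you check exactness on cycles by hand, which in fact only needs the kernel to be acyclic. The paper just quotes Lemma~\ref{ce-ordinary-comparison}. Your version is more self-contained.
\item For hereditarity, the paper takes exactly the route you set aside, and it is short. Projective complexes are acyclic, so an ambient projective resolution of an acyclic complex is already a projective resolution in $\mathsf{Ex}$. Hence $\operatorname{Ext}^i_{\mathsf{Ex}}=\operatorname{Ext}^i_{\operatorname{Ch}(R)}$ on $\mathsf{Ex}$, and hereditarity is inherited from the ambient pair $(\operatorname{dg}\mathsf{WfpProj}(R),\widetilde{\mathsf{SfpInj}}(R))$.
\end{enumerate}

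Your route instead proves, intrinsically, that $\widetilde{\mathsf{SfpInj}}(R)$ is closed under cokernels of inflations in $\mathsf{Ex}$ (and dually for $\widetilde{\mathsf{WfpProj}}(R)$). It uses only module-level hereditarity, and that part is fine. The passage from this closure to the vanishing of all higher Ext needs more care than ``dimension shifting along coresolutions built from the complete pair''. Shifting along a special preenvelope $0\to B\to B^0\to A^0\to 0$ only gives $\operatorname{Ext}^2(A,B)\hookrightarrow\operatorname{Ext}^2(A,B^0)$, which is circular. There are two clean fixes.
\begin{enumerate}
\item Shift along an embedding $B\hookrightarrow I$ into an injective complex. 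Such an $I$ is contractible, hence lies in $\mathsf{Ex}$ and is injective there. Its cokernel lies in $\widetilde{\mathsf{SfpInj}}(R)$ by your closure property.
\item Argue with Yoneda classes. Write a class in $\operatorname{Ext}^n(A,B)$, $n\geq 2$, as the splice of $0\to B\to E\to K\to 0$ with some $\beta\in\operatorname{Ext}^{n-1}(A,K)$. Take a special preenvelope $E\to B_1$, and let $C$ be the cokernel of the composite inflation $B\to B_1$. Then $C\in\widetilde{\mathsf{SfpInj}}(R)$, and there is an induced map $\psi\colon K\to C$. The class equals the splice of $0\to B\to B_1\to C\to 0$ with $\psi_*\beta\in\operatorname{Ext}^{n-1}(A,C)$, which vanishes by induction.
\end{enumerate}
The first fix rests on the same observation the paper uses, namely that ambient injectives and projectives are acyclic. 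So the genuine gain of your approach is the explicit resolving/coresolving closure inside $\mathsf{Ex}$, not an avoidance of that comparison.
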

\begin{proof}
The category $\mathsf{Ex}$ is extension closed, so its first Ext groups
are the ambient first Ext groups between acyclic objects. Theorem
\ref{induced-pairs} and its compatibility equalities identify the two
orthogonals within $\mathsf{Ex}$ with $\widetilde{\mathsf{WfpProj}}(R)$ and
$\widetilde{\mathsf{SfpInj}}(R)$.

For $X\in\mathsf{Ex}$, the first and third sequences of
Corollary~\ref{ordinary-approximations} have all three terms acyclic.
Compatibility gives $P_X\in\widetilde{\mathsf{WfpProj}}(R)$ and
$I_X\in\widetilde{\mathsf{SfpInj}}(R)$, proving both halves of completeness.
An ambient projective resolution of an acyclic complex has acyclic
syzygies, because projective complexes are acyclic. Consequently, it is 
also a projective resolution in $\mathsf{Ex}$. Therefore, higher Ext groups
in $\mathsf{Ex}$ agree with ambient Ext groups, and hereditarity follows
from Theorem~\ref{induced-pairs}. Finally,
Lemma~\ref{ce-ordinary-comparison} shows that every short exact sequence
of acyclic complexes is CE-exact.
\end{proof}

\begin{Corollary}\label{ce-dg-regular}
The equality $\operatorname{CE}(\mathsf{SfpInj}(R))=\operatorname{dg}\mathsf{SfpInj}(R)$
holds if and only if $R$ is von Neumann regular.
\end{Corollary}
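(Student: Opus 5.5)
The plan is to prove both implications directly from the definitions of $\operatorname{dg}\mathsf{SfpInj}(R)$ and $\operatorname{CE}(\mathsf{SfpInj}(R))$. The key observation is that von Neumann regularity is equivalent to $\mathsf{SfpInj}(R)$ being all of $R$-Mod. Indeed, $R$ is von Neumann regular if and only if every finitely presented left module is projective. That in turn holds if and only if $\operatorname{Ext}^1_R(F,-)=0$ for every finitely presented $F$. The ``if'' direction of this last step uses the cyclic modules $R/Ra$: if $R/Ra$ is projective then $Ra$ is a direct summand of $R$, which gives regularity.

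Suppose $R$ is von Neumann regular. Every finitely presented module is projective, so $\operatorname{Ext}^i_R(F,M)=0$ for all $M$ and all $i\ge1$. Hence $\mathsf{SfpInj}(R)=R$-Mod, and $\operatorname{CE}(\mathsf{SfpInj}(R))$ is the class of all complexes. On the other side, $\mathsf{WfpProj}(R)={}^{\perp_1}(R\text{-Mod})$ is the class of projective modules. So $\widetilde{\mathsf{WfpProj}}(R)$ consists of acyclic complexes with projective cycles. Each sequence $0\to Z_nA\to A_n\to Z_{n-1}A\to0$ then splits, so such complexes are contractible. For contractible $A$, $\operatorname{Hom}_R(A,I)$ is contractible and hence acyclic for every $I$. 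Since the component condition is vacuous, $\operatorname{dg}\mathsf{SfpInj}(R)$ is also the class of all complexes, and the two classes coincide. Alternatively, the inclusion of Proposition~\ref{ce-dg-comparison} reduces this direction to showing that $\operatorname{dg}\mathsf{SfpInj}(R)\subseteq\operatorname{CE}(\mathsf{SfpInj}(R))$, which is trivial here.

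Conversely, assume the equality holds and let $M$ be an arbitrary module. Choose an injective embedding $M\to E^0$, followed by $E^0\to E^1$ with kernel $M$; for instance $E^1$ is the injective envelope of $E^0/M$. Form the complex $I$ with $I_0=E^0$, $I_{-1}=E^1$ and zeros elsewhere, so that $Z_0I=M$. This is a bounded complex of injective modules, so it is dg-injective. Concretely, for any acyclic $A$, every chain map $A\to I$ is null-homotopic, by the standard inductive construction of a homotopy into a bounded above complex of injectives; hence $\operatorname{Ext}^1_{\operatorname{Ch}(R)}(A,I)=0$ for all acyclic $A$. In particular $I\in\widetilde{\mathsf{WfpProj}}(R)^{\perp_1}=\operatorname{dg}\mathsf{SfpInj}(R)$ by Theorem~\ref{induced-pairs}. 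By hypothesis $I\in\operatorname{CE}(\mathsf{SfpInj}(R))$, so Proposition~\ref{prop1} gives $M=Z_0I\in\mathsf{SfpInj}(R)$.

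Thus every module is strongly FP-injective, so $\operatorname{Ext}^1_R(F,-)=0$ for each finitely presented $F$. Every finitely presented module is therefore projective, and $R$ is von Neumann regular. The only step needing care is the dg-injectivity of the bounded complex of injectives, which is standard. The rest is bookkeeping with the earlier characterizations.
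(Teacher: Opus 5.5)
Your proof is correct. The direction ``von Neumann regular $\Rightarrow$ equality'' is the paper's argument: once $\mathsf{WfpProj}(R)$ is the class of projectives, every object of $\widetilde{\mathsf{WfpProj}}(R)$ is contractible.

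For the other direction you use a different witness complex. The paper applies the third sequence of Corollary~\ref{ordinary-approximations} to $S^0(M)$. This uses the completeness of the pair $(\widetilde{\mathsf{WfpProj}}(R),\operatorname{dg}\mathsf{SfpInj}(R))$ from Theorem~\ref{induced-pairs}, which rests on the cited Yang--Liu result. It yields a quasi-isomorphism $S^0(M)\to I$ with $I\in\operatorname{dg}\mathsf{SfpInj}(R)$, and the paper then reads off $M\cong H_0(I)$ from the homology condition in the CE class.

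You instead write down $I=(E^0\to E^1)$ explicitly and read off $M=Z_0I$ from the cycle condition. This needs only two facts: bounded complexes of injectives are dg-injective, and $\widetilde{\mathsf{WfpProj}}(R)$ consists of acyclic complexes.

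Your route is more elementary and self-contained. It also shows slightly more: the requirement that every two-term complex of injectives lie in $\operatorname{CE}(\mathsf{SfpInj}(R))$ already forces regularity. The paper's route is a one-line consequence of the approximation machinery it has just built.

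One small wording point: the $R/Ra$ argument belongs to the step ``every finitely presented module is projective $\Rightarrow$ $R$ is regular''. It does not belong to the equivalence with $\operatorname{Ext}^1_R(F,-)=0$, which is immediate.
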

\begin{proof}
Suppose the equality holds. For any module $M$, apply the third
sequence of Corollary~\ref{ordinary-approximations} to $S^0(M)$.
It gives a quasi-isomorphism $S^0(M)\to I$ with
$I\in\operatorname{dg}\mathsf{SfpInj}(R)=\operatorname{CE}(\mathsf{SfpInj}(R))$.
Hence $M\cong H_0(I)\in\mathsf{SfpInj}(R)$. Thus every module belongs to
$\mathsf{SfpInj}(R)$, and every finitely presented module is projective.
This is the standard characterization of a von Neumann regular ring
\cite{Lam}.

Conversely, over a von Neumann regular ring every finitely presented
module is projective, so $\mathsf{SfpInj}(R)=R$-Mod and $\mathsf{WfpProj}(R)=Proj$.
Every object of $\widetilde{\mathsf{WfpProj}}(R)$ is contractible, since its
cycle sequences split. Its Hom-complex into any complex is therefore
contractible. Hence $\operatorname{dg}\mathsf{SfpInj}(R)=\operatorname{Ch}(R)
=\operatorname{CE}(\mathsf{SfpInj}(R))$.
\end{proof}

\subsection{The dg-exact structure}

The preceding constructions lead to two ordinary cotorsion pairs,
one with dg-weakly FP-projective left class and the other with
dg-strongly FP-injective right class. We now place these two dg
classes together in a cotorsion pair by changing the exact structure.
The relevant structure comes from the balanced pair of dg-projective
and dg-injective complexes; compare \cite[Remark 10.4]{En} and
\cite[Section 4]{EPZBalanced}. Here $\operatorname{dg}\mathrm{Proj}$ consists of complexes of projective modules
whose Hom-complex into every acyclic complex is acyclic, and
$\operatorname{dg}\mathrm{Inj}$ is defined dually. The ordinary cotorsion pairs
$(\operatorname{dg}\mathrm{Proj},\mathsf{Ex})$ and
$(\mathsf{Ex},\operatorname{dg}\mathrm{Inj})$ are complete hereditary and form a
cotorsion triplet. Consequently, the short exact sequences preserved
by $\operatorname{Hom}_{\operatorname{Ch}(R)}(P,-)$ for every
$P\in \operatorname{dg}\mathrm{Proj}$ are precisely those preserved by
$\operatorname{Hom}_{\operatorname{Ch}(R)}(-,I)$ for every
$I\in \operatorname{dg}\mathrm{Inj}$. They define an exact structure
$\mathcal E_{\mathrm{dg}}$ with enough projectives and injectives,
whose projective and injective objects are $\operatorname{dg}\mathrm{Proj}$ and
$\operatorname{dg}\mathrm{Inj}$, respectively; see also
\cite[Theorem 2.9]{WLHExact}. We write
$\operatorname{Ext}^i_{\mathcal E_{\mathrm{dg}}}$ for its Yoneda Ext. We first describe these conflations in the ordinary derived category.

\begin{Proposition}\label{dg-derived-split}
Let $0\to Y\to E\xrightarrow{p}X\to0$ be a short exact sequence
of complexes, and let $\delta:X\to\Sigma Y$ be the connecting
morphism of its associated triangle in $D(R)$. The sequence is an
$\mathcal E_{\mathrm{dg}}$-conflation if and only if $\delta=0$;
equivalently, its triangle splits in $D(R)$. In particular,
\begin{enumerate}[label=(\roman*)]
\item $\mathcal E_{\mathrm{dg}}\subseteq\mathcal E_{\mathrm{CE}}$,
and each induced short exact sequence on homology is split;
\item if either $X$ or $Y$ is acyclic, the natural map
\[
\operatorname{Ext}^1_{\mathcal E_{\mathrm{dg}}}(X,Y)
\longrightarrow\operatorname{Ext}^1_{\operatorname{Ch}(R)}(X,Y)
\]
is an isomorphism;
\item for modules $M,N$ and $n\in\mathbb Z$,
$\operatorname{Ext}^1_{\mathcal E_{\mathrm{dg}}}(S^n(M),S^n(N))=0$.
\end{enumerate}
\end{Proposition}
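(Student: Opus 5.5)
The first step is to recast the definition of $\mathcal E_{\mathrm{dg}}$ in derived-category terms. We use that $\operatorname{Hom}_{\operatorname{Ch}(R)}(P,-)$ for $P\in\operatorname{dg}\mathrm{Proj}$ can be read off in $D(R)$. For such $P$ and any complex $Z$,
\[
\operatorname{Hom}_{K(R)}(P,Z)\cong\operatorname{Hom}_{D(R)}(P,Z),
\]
since $\operatorname{Hom}_R(P,-)$ sends acyclic complexes to acyclic complexes. The class $\operatorname{dg}\mathrm{Proj}$ is closed under shifts, and $\operatorname{Hom}_{\operatorname{Ch}(R)}(P,Z)=Z^0\operatorname{Hom}_R(P,Z)$.

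Apply $\operatorname{Hom}_R(P,-)$ to the degreewise split sequence $0\to Y\to E\to X\to0$; it is degreewise split because $P$ has projective components. This gives a short exact sequence of Hom-complexes and hence a long exact cohomology sequence
\[
\operatorname{Hom}_{K}(P,E)\to\operatorname{Hom}_{K}(P,X)\xrightarrow{\delta_*}\operatorname{Hom}_{K}(P,\Sigma Y).
\]
Its connecting map is composition with $\delta$ after identifying $K$-Homs with $D$-Homs. I would check this sign and identification carefully, via the standard fact that the triangle attached to a degreewise split sequence is the mapping-cone triangle.

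Surjectivity of $\operatorname{Hom}_{\operatorname{Ch}(R)}(P,E)\to\operatorname{Hom}_{\operatorname{Ch}(R)}(P,X)$ is more than surjectivity on $K$-Homs. However, a chain map $f\colon P\to X$ lifts to a chain map exactly when the obstruction class of $f$ in $H^1\operatorname{Hom}(P,Y)$ vanishes, and that class is $\delta\circ f$. Moreover, lifting up to homotopy can be corrected to an honest lift, using a degreewise splitting and the contractible disks. So the sequence is a conflation iff $\delta\circ f=0$ for all $P\to X$ with $P$ dg-projective. Now take $f$ to be a dg-projective resolution $P\to X$, which is an isomorphism in $D(R)$. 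Then the condition becomes $\delta=0$, and $\delta=0$ is equivalent to splitting of the triangle. This equivalence is the main step, and I expect the main obstacle to be the identification of the lifting obstruction with $\delta\circ f$.

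For the consequences:

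(i) If $\delta=0$, the long exact homology sequence of the triangle breaks into split short exact sequences $0\to H_nY\to H_nE\to H_nX\to0$. Splitting holds because $X\to\Sigma Y$ zero means $E\cong Y\oplus X$ in $D(R)$ compatibly with the maps. Exactness on homology, together with degreewise exactness, gives exactness on cycles by a snake-lemma argument comparing $0\to B\to Z\to H\to 0$ and $0\to Z\to C\to B_{-1}\to 0$. Hence the sequence is CE-exact, using \cite[Lemma 5.2]{En}.

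(ii) If $X$ or $Y$ is acyclic, then $X\cong0$ or $\Sigma Y\cong0$ in $D(R)$, so $\delta=0$ automatically. Thus every short exact sequence between such objects is an $\mathcal E_{\mathrm{dg}}$-conflation. The two Yoneda $\operatorname{Ext}^1$ groups consist of the same extensions, hence the map is an isomorphism.

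(iii) An extension $0\to S^n(N)\to E\to S^n(M)\to0$ is concentrated in degree $n$, so $E=S^n(E_n)$. In $D(R)$, $\delta$ lies in
\[
\operatorname{Hom}_{D(R)}(S^n(M),S^{n+1}(N))\cong\operatorname{Ext}^1_R(M,N),
\]
and it is the class of the module extension. If it is a conflation, then $\delta=0$, so the module extension splits, and then $E$ splits as a complex. Therefore $\operatorname{Ext}^1_{\mathcal E_{\mathrm{dg}}}(S^n(M),S^n(N))=0$.
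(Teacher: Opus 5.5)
Your strategy is the same as the paper's. You show that a chain map $f\colon P\to X$ with $P$ dg-projective lifts through $p$ exactly when $\delta f=0$ in $D(R)$, and then test this on a dg-projective resolution $q\colon P\to X$. Your derivations of (i)--(iii) also match the paper's; in (iii) you could simply cite (i), since $H_n$ of a stalk extension is the module extension itself.

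The one real problem is in the step you flagged as the main obstacle. You call $0\to Y\to E\to X\to0$ ``the degreewise split sequence''. You then plan to identify the connecting map of the Hom-complex sequence with $\delta\circ-$ via the mapping-cone description of triangles of degreewise split sequences. But an arbitrary short exact sequence of complexes is not degreewise split; take $D^0$ of a non-split short exact sequence of modules. So that justification does not apply as stated.

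The repair is what the paper does: pull the sequence back along $f$.
\begin{itemize}
\item The pullback $0\to Y\to E\times_XP\to P\to0$ \emph{is} degreewise split, because every $P_n$ is projective. Its differential is therefore twisted by a chain map $\tau\colon P\to\Sigma Y$.
\item It splits as a sequence of complexes if and only if $\tau$ is null-homotopic.
\item Its connecting morphism in $D(R)$ is $\pm[\tau]$, and this equals $\delta f$ by naturality of connecting morphisms along the pullback square.
\item Since $f$ lifts through $p$ if and only if this pullback splits, and $\operatorname{Hom}_{K(R)}(P,\Sigma Y)\cong\operatorname{Hom}_{D(R)}(P,\Sigma Y)$ for dg-projective $P$, you get that $f$ lifts if and only if $\delta f=0$.
\end{itemize}
Alternatively, you may keep your Hom-complex sequence and compare it with $\operatorname{Hom}_R(P,\operatorname{Cone}(Y\to E))\to\operatorname{Hom}_R(P,X)$. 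This map is a quasi-isomorphism because $P$ is dg-projective.

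Nothing else in your argument needs a splitting either:
\begin{itemize}
\item Exactness of $0\to\operatorname{Hom}_R(P,Y)\to\operatorname{Hom}_R(P,E)\to\operatorname{Hom}_R(P,X)\to0$ uses only projectivity of the $P_k$.
\item Vanishing of the connecting class already produces an honest lift. Lift $f$ degreewise to $b$. The element $d^Eb-bd^P$ is a cycle taking values in $Y$. If it is the boundary of some degree-zero $a\in\operatorname{Hom}_R(P,Y)$, then $b-a$ is a chain map over $f$.
\end{itemize}
So the homotopy-correction step with contractible disks is unnecessary.
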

\begin{proof}
Choose a short exact sequence $0\to K\to P\xrightarrow{q}X\to0$
with $P\in \operatorname{dg}\mathrm{Proj}$ and $K$ acyclic. If the given sequence
is an $\mathcal E_{\mathrm{dg}}$-conflation, $q$ lifts through $p$.
Thus $\delta q=0$ in $D(R)$. Since $q$ is a quasi-isomorphism,
$\delta=0$.

Conversely, for every $P\in \operatorname{dg}\mathrm{Proj}$ there are natural
isomorphisms
\[
\operatorname{Ext}^1_{\operatorname{Ch}(R)}(P,Y)
\cong\operatorname{Hom}_{K(R)}(P,\Sigma Y)
\cong\operatorname{Hom}_{D(R)}(P,\Sigma Y).
\]
The first holds because every extension with quotient $P$ is split
in each degree, and degreewise split extensions are classified by
maps to the suspension up to homotopy. The second follows from the
dg-projectivity of $P$. Under these identifications, the pullback
of the given extension along a chain map $f:P\to X$ corresponds
to $\delta f$. If $\delta=0$, every such pullback splits, so every
$f$ lifts through $p$. This is the defining condition for a
conflation.

A split triangle gives split short exact sequences on homology.
For a short exact sequence of complexes, vanishing of the homology
connecting maps also gives surjectivity on cycles: if $x\in Z_nX$
is lifted to $e\in E_n$, then $d(e)\in Z_{n-1}Y$, and the vanishing
connecting class allows us to subtract an element of $Y_n$ to make
$e$ a cycle. This proves (i). If either end is acyclic, it is zero
in $D(R)$ and hence $\delta=0$. All ordinary extensions are then
conflations, with the same equivalence relation and Baer sum, proving
(ii). Finally, an extension of stalks in degree $n$ is itself a stalk
extension. By (i), its sequence of modules splits, proving (iii).
\end{proof}
The derived-category description also explains a limitation of this
exact structure. Its deflations are split epimorphisms on homology,
so an exact-category generator would impose a strong generation
condition on the module category. In general, the efficiency argument
used for CE approximations is therefore unavailable.
\begin{Proposition}\label{dg-generator-obstruction}
If $(\operatorname{Ch}(R),\mathcal E_{\mathrm{dg}})$ has a generator
in the exact-category sense, then $R$ is left pure semisimple.
Consequently, if $R$ is not left pure semisimple, this exact category
is not efficient.
\end{Proposition}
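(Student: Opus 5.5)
Suppose $G$ is a generator of $(\operatorname{Ch}(R),\mathcal E_{\mathrm{dg}})$. In the exact-category sense this means that every complex $X$ admits an $\mathcal E_{\mathrm{dg}}$-deflation $G^{(I)}\to X$ from a coproduct of copies of $G$. Coproducts of conflations are conflations here, since $\mathcal E_{\mathrm{dg}}$ is closed under coproducts (the test objects in $\operatorname{dg}\mathrm{Inj}$ turn coproducts into products). The plan is to turn such deflations into split epimorphisms of modules on homology, using Proposition~\ref{dg-derived-split}(i). Fix $n$ and set $M_0=H_n(G)$. For an arbitrary module $N$, apply the generator property to $X=S^n(N)$. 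This gives a conflation $0\to K\to G^{(I)}\to S^n(N)\to 0$. By Proposition~\ref{dg-derived-split}(i) the induced sequence on $H_n$ is split short exact, so $N$ is a direct summand of $H_n(G^{(I)})\cong M_0^{(I)}$, because homology commutes with coproducts. Hence every left $R$-module is a direct summand of a direct sum of copies of the single module $M_0$.

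The next step is to deduce left pure semisimplicity from this. First, $M_0$ decomposes as a direct sum of countably generated modules. Write $M_0$ as a direct summand of something, or better apply Kaplansky's theorem to $M_0^{(I)}$: a summand of a direct sum of countably generated modules is again such a direct sum. That route requires $M_0$ itself to be a sum of countably generated modules, which we do not know a priori. So instead I would argue with the standard characterization: $R$ is left pure semisimple if and only if every module is a direct sum of finitely generated modules. Equivalently, by a theorem of Chase/Auslander–Simson, it suffices that there is a cardinal $\kappa$ such that every module is a direct sum of $<\kappa$-generated modules. Take $\kappa$ larger than the number of generators of $M_0$. Then every module is a summand of a direct sum of $<\kappa$-generated modules. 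By Kaplansky's theorem (summands of direct sums of $\le\mu$-generated modules are direct sums of $\le\mu$-generated modules, for $\mu$ infinite) every module is a direct sum of $\le|M_0|+\aleph_0$-generated modules. Chase's theorem, in Simson's form ("if every module is a direct sum of $\le\mu$-generated modules for some cardinal $\mu$, then $R$ is left pure semisimple"), then gives the conclusion.

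Finally, efficiency in the sense of \cite{StovicekExact} includes the existence of a generator, so the non-efficiency statement is the contrapositive. The main obstacle is the middle step: correctly invoking the Chase–Simson bounded-generation criterion for pure semisimplicity and checking the left/right conventions. I would cite it from the literature on pure semisimple rings rather than reprove it. Deflations being surjective on homology and split there is already supplied by Proposition~\ref{dg-derived-split}.
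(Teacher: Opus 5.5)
Your proposal is correct and is essentially the paper's proof. An $\mathcal E_{\mathrm{dg}}$-deflation $G^{(I)}\to S^n(N)$ is split on $H_n$ by Proposition~\ref{dg-derived-split}(i), so $R\text{-}\mathrm{Mod}=\operatorname{Add}(H_n(G))$, and non-efficiency follows because efficiency requires a generator. The only difference is that the paper cites \cite[Proposition 2.2]{StovicekLocally} for ``$R\text{-}\mathrm{Mod}=\operatorname{Add}(M)$ implies left pure semisimple'', whereas you unpack that step via Kaplansky's theorem and the bounded-generation criterion for pure semisimplicity; this is the standard proof of the cited fact, and your abandoned first Kaplansky attempt and the remark on coproducts of conflations can simply be dropped.
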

\begin{proof}
Let $G$ be such a generator. For every module $M$ there is an
$\mathcal E_{\mathrm{dg}}$-deflation
$G^{(J)}\twoheadrightarrow S^0(M)$ for some set $J$.
By Proposition~\ref{dg-derived-split}, its map on zeroth homology
is a split epimorphism. Since homology commutes with coproducts,
$M$ is a direct summand of $H_0(G)^{(J)}$. Therefore
\[
R\text{-}\mathrm{Mod}=\operatorname{Add}(H_0(G)),
\]
where $\operatorname{Add}$ denotes direct summands of arbitrary
coproducts. By \cite[Proposition 2.2]{StovicekLocally}, applied to
left modules, this implies that $R$ is left pure semisimple.
Efficiency requires an exact-category generator by axiom (Ef3)
of \cite[Definition 3.4]{StovicekExact}.
\end{proof}
The absence of an exact-category generator does not prevent the
cotorsion pair of interest from being complete. Instead of using
set-cogeneration in the dg exact structure, we use the ordinary
induced pairs and identify their relative orthogonals. The argument
applies to any complete hereditary module cotorsion pair.
\begin{Theorem}\label{dg-relative-pair}
Let $(\mathcal L,\mathcal R)$ be a complete hereditary cotorsion
pair in $R$-Mod, and use the induced classes
$\widetilde{\mathcal L},\widetilde{\mathcal R}$,
$\operatorname{dg}\mathcal L,\operatorname{dg}\mathcal R$
defined as in Section 3. Then
$(\operatorname{dg}\mathcal L,\operatorname{dg}\mathcal R)$ is a
complete hereditary cotorsion pair in
$(\operatorname{Ch}(R),\mathcal E_{\mathrm{dg}})$.
In particular, for every ring $R$ the pair
\[
\bigl(\operatorname{dg}\mathsf{WfpProj}(R),
      \operatorname{dg}\mathsf{SfpInj}(R)\bigr)
\]
is complete hereditary in this exact category.
\end{Theorem}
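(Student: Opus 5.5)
We will compute the relative first Ext groups in $\mathcal E_{\mathrm{dg}}$ using the description of conflations in Proposition~\ref{dg-derived-split}, and then obtain completeness from the ordinary induced pairs (Gillespie's, as in Theorem~\ref{induced-pairs}), whose approximation sequences have an acyclic end. The general inputs are: $(\operatorname{dg}\mathcal L,\widetilde{\mathcal R})$ and $(\widetilde{\mathcal L},\operatorname{dg}\mathcal R)$ are complete hereditary cotorsion pairs in $\operatorname{Ch}(R)$, with $\operatorname{dg}\mathcal L\cap\mathsf{Ex}=\widetilde{\mathcal L}$ and $\operatorname{dg}\mathcal R\cap\mathsf{Ex}=\widetilde{\mathcal R}$. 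For the specific pair this is Theorem~\ref{induced-pairs} via Proposition~\ref{module-pair}.

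\emph{Orthogonality.} The first key fact is that for $P\in\operatorname{dg}\mathcal L$ and any $Y$, every ordinary extension $0\to Y\to E\to P\to 0$ is degreewise split, because $\operatorname{Ext}^1_R(P_n,Y_n)$ need not vanish in general. So I would argue differently. Given an $\mathcal E_{\mathrm{dg}}$-conflation $0\to I\to E\to P\to0$ with $P\in\operatorname{dg}\mathcal L$ and $I\in\operatorname{dg}\mathcal R$, take the special preenvelope $0\to I\to I'\to A\to 0$ from the pair $(\operatorname{dg}\mathcal L,\widetilde{\mathcal R})$, with $I'\in\widetilde{\mathcal R}$ and $A\in\operatorname{dg}\mathcal L$. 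Then $A\in\operatorname{dg}\mathcal L\cap{}^{\perp}\!\!\ldots$; rather, I use the closure properties: $\operatorname{dg}\mathcal R$ is closed under extensions and contains $\widetilde{\mathcal R}$, hence $A\in\operatorname{dg}\mathcal R$ if the pair is hereditary (cokernel of a mono in $\operatorname{dg}\mathcal R$). Hence $A\in\operatorname{dg}\mathcal L\cap\operatorname{dg}\mathcal R$.

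Cleaner route, which I will actually follow: use Hom-complexes. For $P\in\operatorname{dg}\mathcal L$ and $I\in\operatorname{dg}\mathcal R$, degreewise $\operatorname{Ext}^1_R(P_n,I_n)=0$. So every extension is degreewise split, and $\operatorname{Ext}^1_{\operatorname{Ch}(R)}(P,I)\cong H_{-1}\operatorname{Hom}_R(P,I)=\operatorname{Hom}_{K(R)}(P,\Sigma I)$. Inside this group, the dg-conflations are the classes whose connecting map vanishes in $D(R)$, that is, the kernel of $\operatorname{Hom}_{K}(P,\Sigma I)\to\operatorname{Hom}_D(P,\Sigma I)$. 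This kernel consists of maps factoring through acyclic complexes. Factor $P\to\Sigma I$ through a dg-injective-style replacement: use the sequence $0\to I\to I_X\to A_X\to0$ from the pair $(\widetilde{\mathcal L},\operatorname{dg}\mathcal R)$, and the sequence $0\to B\to P'\to P\to 0$ with $B\in\widetilde{\mathcal R}$. A map $P\to\Sigma I$ that is zero in $D(R)$ factors in $K(R)$ through some acyclic $W$. Replace $W$ by an acyclic $\widetilde{\mathcal R}$ complex, using that $\widetilde{\mathcal R}$-preenvelopes of acyclic complexes are acyclic with cokernel in $\widetilde{\mathcal L}$ (Corollary~\ref{acyclic-pair}, in general form), and the map $W\to\Sigma I$ extends since $\operatorname{Ext}^1(\widetilde{\mathcal L},\operatorname{dg}\mathcal R)=0$ and the extension is degreewise split. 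Then $P\to W'$ is null-homotopic, since $\operatorname{Hom}_R(P,W')$ is acyclic by the definition of $\operatorname{dg}\mathcal L$. Hence the class vanishes and $\operatorname{Ext}^1_{\mathcal E_{\mathrm{dg}}}(P,I)=0$. Making this factorization argument precise is the main obstacle. The fallback is to show that $\operatorname{Hom}_K(P,\Sigma I)\to\operatorname{Hom}_D$ is injective by Hom-acyclicity: $\operatorname{Hom}_R(P,-)$ kills $\widetilde{\mathcal R}$ and $\operatorname{Hom}_R(-,I)$ kills $\widetilde{\mathcal L}$.

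\emph{Completeness and equality of classes.} Given $X$, take the ordinary sequence $0\to B\to P\to X\to0$ with $P\in\operatorname{dg}\mathcal L$ and $B\in\widetilde{\mathcal R}\subseteq\operatorname{dg}\mathcal R$. Since $B$ is acyclic, $\delta=0$ and this is a dg-conflation by Proposition~\ref{dg-derived-split}. The dual construction uses $0\to X\to I\to A\to0$ with $A\in\widetilde{\mathcal L}$. The usual retract argument then gives ${}^{\perp_1}\operatorname{dg}\mathcal R=\operatorname{dg}\mathcal L$ and its dual: if $\operatorname{Ext}^1_{\mathrm{dg}}(X,\operatorname{dg}\mathcal R)=0$, the precover conflation splits, so $X$ is a summand of $P$. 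Hereditarity follows by dimension shifting: $\operatorname{dg}\mathcal R$ contains $\operatorname{dg}\mathrm{Inj}$, and it is closed under cokernels of dg-inflations, since ordinary Hom-acyclicity against $\widetilde{\mathcal L}$ is preserved in short exact sequences.
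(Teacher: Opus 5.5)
Your proof is correct, and for the central step, relative orthogonality, it takes a different route from the paper.

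\emph{The paper's route.} It argues at chain level. Given a conflation $0\to Y\to E\to X\to0$, it lifts a dg-projective resolution $Q\to X$ through $E\to X$, using the defining lifting property of $\mathcal E_{\mathrm{dg}}$; the kernel of this resolution lies in $\widetilde{\mathcal L}$. The induced map from that kernel to $Y$ is null-homotopic. The paper extends this homotopy degreewise, using $\operatorname{Ext}^1_R(X_n,Y_{n+1})=0$, and so produces a section.

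\emph{Your route.} You use degreewise splitting to identify $\operatorname{Ext}^1_{\operatorname{Ch}(R)}(P,I)$ with $\operatorname{Hom}_{K(R)}(P,\Sigma I)$. Proposition~\ref{dg-derived-split} then shows that conflations correspond to the kernel of $\operatorname{Hom}_{K(R)}(P,\Sigma I)\to\operatorname{Hom}_{D(R)}(P,\Sigma I)$. You kill that kernel as follows. A map factoring through an acyclic $W$ also factors through the $\widetilde{\mathcal R}$-preenvelope $W\to W'$, because the map $W\to\Sigma I$ extends using $\operatorname{Ext}^1_{\operatorname{Ch}(R)}(A,\Sigma I)=0$ for $A\in\widetilde{\mathcal L}$. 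Finally, $P\to W'$ is null-homotopic by the definition of $\operatorname{dg}\mathcal L$.

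This argument is already complete; the step you call the main obstacle is settled by what you wrote. Your route gives a conceptual statement: $K(R)\to D(R)$ is injective on morphisms from $\operatorname{dg}\mathcal L$ to $\operatorname{dg}\mathcal R$. Consequently the relative group $\operatorname{Ext}^1_{\mathcal E_{\mathrm{dg}}}(P,I)$, which you identified with the kernel of that map, vanishes. The paper's argument is more elementary and avoids the Verdier-quotient factorization.

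The remaining steps also differ, and both versions are fine:
\begin{itemize}
\item For the equality of classes, you use the retract argument on the approximation conflations; the paper instead tests against $\widetilde{\mathcal L}$ and $\widetilde{\mathcal R}$ via Proposition~\ref{dg-derived-split}(ii).
\item For heredity, you dimension-shift along dg-injective coresolutions rather than citing the exact-category result. To get the components of the cokernel into $\mathcal R$, you should also say that $\mathcal R$ is coresolving.
\end{itemize}

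Some cleanup is needed. Delete the two abandoned openings. In particular, the sentence asserting that every extension by an arbitrary $Y$ is degreewise split ``because'' $\operatorname{Ext}^1_R(P_n,Y_n)$ need not vanish contradicts itself. Also, extending $W\to\Sigma I$ along $W\to W'$ needs only $\operatorname{Ext}^1_{\operatorname{Ch}(R)}(A,\Sigma I)=0$. It does not need $0\to W\to W'\to A\to0$ to be degreewise split, which fails in general, so drop that remark.
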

\begin{proof}
The ordinary pairs
$(\operatorname{dg}\mathcal L,\widetilde{\mathcal R})$ and
$(\widetilde{\mathcal L},\operatorname{dg}\mathcal R)$ are complete
hereditary, and satisfy
$\operatorname{dg}\mathcal L\cap\mathsf{Ex}
=\widetilde{\mathcal L}$ and its dual, by
\cite[Corollaries 3.8 and 3.13]{GillespieFlat} and
\cite[Theorem 3.5]{YangLiu}.

We first prove relative orthogonality. Let
$X\in\operatorname{dg}\mathcal L$,
$Y\in\operatorname{dg}\mathcal R$, and consider a conflation
$0\to Y\xrightarrow{i}E\xrightarrow{p}X\to0$.
Choose $0\to K\xrightarrow{j}P\xrightarrow{q}X\to0$ with
$P\in \operatorname{dg}\mathrm{Proj}$ and $K$ acyclic.
We have $\operatorname{dg}\mathrm{Proj}\subseteq\operatorname{dg}\mathcal L$:
projective modules belong to $\mathcal L$, and
$\widetilde{\mathcal R}\subseteq\mathsf{Ex}$.
Since $\operatorname{dg}\mathcal L$ is resolving in the ordinary
abelian structure, $K\in\operatorname{dg}\mathcal L\cap\mathsf{Ex}
=\widetilde{\mathcal L}$.

The defining lifting property of the conflation provides a chain
map $s:P\to E$ with $ps=q$. Its restriction to $K$ factors as
$sj=if$ for a chain map $f:K\to Y$.
The definition of $\operatorname{dg}\mathcal R$ implies that $f$
is null-homotopic. Write
\[
f_n=d^Y_{n+1}h_n+h_{n-1}d^K_n,
\qquad h_n:K_n\longrightarrow Y_{n+1}.
\]
Since $X_n\in\mathcal L$ and $Y_{n+1}\in\mathcal R$, the equality
$\operatorname{Ext}^1_R(X_n,Y_{n+1})=0$ allows each $h_n$ to extend
to a map $\widehat h_n:P_n\to Y_{n+1}$. The maps
\[
g_n=d^Y_{n+1}\widehat h_n+\widehat h_{n-1}d^P_n
\]
form a chain map $g:P\to Y$ with $gj=f$. Thus $s-ig$ vanishes
on $K$ and factors through a chain map $t:X\to E$.
The equality $ptq=q$ and the epimorphism $q$ give $pt=1_X$.
Hence the conflation splits.

If $V$ is right orthogonal to $\operatorname{dg}\mathcal L$
in $\mathcal E_{\mathrm{dg}}$, test against
$\widetilde{\mathcal L}\subseteq\operatorname{dg}\mathcal L$.
Proposition~\ref{dg-derived-split}(ii) gives
$\operatorname{Ext}^1_{\operatorname{Ch}(R)}
(\widetilde{\mathcal L},V)=0$, so
$V\in\operatorname{dg}\mathcal R$. Together with the preceding
vanishing this proves
\[
(\operatorname{dg}\mathcal L)^{\perp_{1,\mathcal E_{\mathrm{dg}}}}
=\operatorname{dg}\mathcal R.
\]
Dually, testing against
$\widetilde{\mathcal R}\subseteq\operatorname{dg}\mathcal R$
identifies the left orthogonal with $\operatorname{dg}\mathcal L$.

Now apply \cite[Corollary 3.6(1)]{WLHExact} to the ordinary complete
hereditary pair
$(\operatorname{dg}\mathcal L,\widetilde{\mathcal R})$.
Its left class contains the $\mathcal E_{\mathrm{dg}}$-projectives,
as checked above. The cited result gives completeness and heredity
for the relative pair with this left class, whose right class we
have just identified.

More explicitly, the ordinary pairs supply sequences
\[
0\to V_X\to U_X\to X\to0,
\qquad
0\to X\to V^X\to U^X\to0,
\]
with $U_X\in\operatorname{dg}\mathcal L$,
$V_X\in\widetilde{\mathcal R}$,
$V^X\in\operatorname{dg}\mathcal R$ and
$U^X\in\widetilde{\mathcal L}$. Both are conflations by
Proposition~\ref{dg-derived-split}(ii), because their kernel and
cokernel, respectively, are acyclic. These give the required special
approximations directly. The particular assertion follows from
Proposition~\ref{module-pair}.
\end{proof}

\section{The accessible category of Strongly FP-injective complexes}

We remind the reader that a complex is called finitely presented if it is bounded and consists of finitely presented modules and that an object $F$ in a category $\mathcal{K}$ is called finitely presentable if the functor $\operatorname{Hom}_\mathcal{K}(F,-):\mathcal{K}\longrightarrow \textbf{Set}$ preserves directed colimits {\rm(\cite[Definition 1.1]{AR})}. A category $\mathcal{K}$ is called locally finitely presentable provided that it is cocomplete (i.e. it has all small colimits) and there exists a set $\mathcal{F}\subset\mathcal{K
}$ of finitely presentable objects such that every object in $\mathcal{K}$ is a directed colimit of objects from $\mathcal{F}$ {\rm(\cite[Definition 1.9]{AR})}. By \cite[Corollary 4.6]{CH}, the category of complexes over a ring is locally finitely presentable and the finitely presentable objects are exactly the finitely presented complexes.

Let $\kappa$ be a regular infinite cardinal, a poset $\Lambda$ is called $\kappa$-directed if every subset of $\Lambda$ of less than $\kappa$ elements has an upper bound in $\Lambda$. A diagram in a category $\mathcal{K}$ is a functor $D:\mathrm{D}\rightarrow\mathcal{K}$ from a small category $\mathrm{D}$ which is called the scheme of the diagram. A diagram is called $\kappa$-directed if its scheme is a $\kappa$-directed poset. Finally, we call an object $K\in\mathcal{K}$ $\kappa$-presentable if the functor $\operatorname{Hom}_\mathcal{K}(K,-):\mathcal{K}\longrightarrow \textbf{Set}$ preserves $\kappa$-directed colimits  {\rm(\cite[Definition 1.13]{AR})}. For example, in $R\text{-Mod}$ the $\kappa$-presentable objects are exactly the $<\kappa$-presented modules. In addition, with this terminology, the $\aleph_0$-presentable objects are the finitely presentable objects.

The category $\mathcal{K}$ is called $\kappa$-accessible if it has all $\kappa$-directed colimits and a set $\mathcal{A}$ of $\kappa$-presentable objects such that every object of $\mathcal{K}$ is a $\kappa$-directed colimit of objects from $\mathcal{A}$. If in addition, the category $\mathcal{K}$ is cocomplete then it is called locally $\kappa$-presentable {\rm(\cite[Definition 2.1 and Definition 1.17]{AR})}. For example, the category of complexes of modules over a ring is locally $\kappa$-presentable for every regular infinite cardinal $\kappa$. In the case that, $\kappa$ is uncountable, the $\kappa$-presentable objects are exactly all (bounded and unbounded) complexes consisting of $\kappa$-presentable objects {\rm(\cite[Lemma 1.5(c)]{PO})}.

\begin{Definition}Let $\lambda$ be an infinite cardinal. We call a ring left $<\lambda$-coherent if, for every $\lambda$-presentable left $R$-module $K$, every submodule of $K$ having fewer than $\lambda$ generators is $\lambda$-presentable.  
    
\end{Definition}

\begin{Remark} For $\lambda=\aleph_0$ the above definition retrieves the definition of coherence.
    
\end{Remark}

\vspace{0.1in}

For two cardinals $\nu$ and $\lambda$ we let $\nu^{<\lambda}$ denote the supremum of the cardinals $\nu^\mu$ taken over all the cardinals $\mu<\lambda$.

\vspace{0.1in}

\begin{Proposition}\label{Posfpinj} 
Let $R$ be a left $<\lambda$-coherent ring where $\lambda$ is a regular infinite cardinal. Let $\nu$ be a cardinal such that $|R| + \aleph_0\leq \nu $ and $\nu^{<\lambda}=\nu$. Then the full subcategory of strongly FP-injective modules is closed under $\lambda$-directed colimits in $R$-Mod. The category of strongly FP-injective modules is $\nu^{+}$-accessible with direct limits of $\lambda$-index chains, and the $\nu^{+}$-presentable objects are exactly all the $\nu^{+}$-presentable strongly FP-injective modules. Moreover, every strongly FP-injective module is a $\nu^{+}$-directed colimit of $\nu^{+}$-presentable strongly FP-injective modules. More precisely, for every strongly FP-injective module $M$, the poset, $\mathcal{P}$, of all $\nu^{+}$-presentable strongly FP-injective submodules of $M$ is $\nu^{+}$-directed and $M$ can be viewed as the $\nu^{+}$-directed union of $\mathcal{P}$.   
    
\end{Proposition}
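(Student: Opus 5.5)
The proof has three steps: closure under $\lambda$-directed colimits, a downward Löwenheim--Skolem argument producing small strongly FP-injective submodules, and then the accessibility statements.

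\textbf{Step 1: closure under $\lambda$-directed colimits.} Let $M=\varinjlim_{i\in I} M_i$ be a $\lambda$-directed colimit with every $M_i\in\mathsf{SfpInj}(R)$. We must show $\operatorname{Ext}^k_R(F,M)=0$ for every finitely presented $F$ and every $k\geq 1$.

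Take a projective resolution of $F$. By left $<\lambda$-coherence it can be chosen so that every term is finitely generated free and every syzygy is $\lambda$-presentable. This uses induction: each syzygy is a submodule with fewer than $\lambda$ generators of a $\lambda$-presentable module, namely a finitely generated free module or a $\lambda$-presentable syzygy covered by a free module of rank $<\lambda$. More precisely, one should use free modules of rank $<\lambda$ and check that each kernel is again $\lambda$-presentable.

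Then Proposition~\ref{module-pair} gives $\operatorname{Ext}^1_R(\Omega^jF,-)\cong\operatorname{Ext}^{j+1}_R(F,-)$. Since $\Omega^{j}F$ is $\lambda$-presentable, we have
\[
\operatorname{Hom}_R(\Omega^{j}F,\varinjlim M_i)=\varinjlim\operatorname{Hom}_R(\Omega^{j}F,M_i).
\]
The same holds for the free term $P_j$ of rank $<\lambda$ mapping onto $\Omega^jF$. Hence $\operatorname{Ext}^1_R(\Omega^jF,-)$, being the cokernel of $\operatorname{Hom}(P_j,-)\to\operatorname{Hom}(\Omega^{j+1}F,-)$, commutes with $\lambda$-directed colimits. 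It therefore vanishes on $M$.

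\textbf{Step 2: small strongly FP-injective submodules (the main obstacle).} Let $M\in\mathsf{SfpInj}(R)$ and let $X\subseteq M$ be a subset with $|X|\leq\nu$. We want a submodule $N$ with $X\subseteq N$, $|N|\leq\nu$ and $N\in\mathsf{SfpInj}(R)$.

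The natural criterion is a pure-submodule argument. Since $\mathsf{SfpInj}(R)$ is closed under cokernels of monomorphisms, it is enough to produce $N\in\mathsf{SfpInj}$ with $M/N\in\mathsf{SfpInj}$. Closure under pure submodules is not available without coherence, so we construct $N$ by a chain of length $\lambda$:
\[
N_0\subseteq N_1\subseteq\cdots\subseteq N_\alpha\subseteq\cdots\qquad(\alpha<\lambda),
\]
each of cardinality $\leq\nu$, with $N=\bigcup_{\alpha<\lambda} N_\alpha$.

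At stage $\alpha$, we enlarge $N_\alpha$ to $N_{\alpha+1}$ so that every obstruction to extension is killed. Concretely, consider every $\Omega^jF$ with $F$ in a set of representatives, every $<\lambda$-generated subsystem, and every homomorphism $\Omega^{j+1}F\to N_\alpha$. Each such map extends to $P_j\to M$ because $\operatorname{Ext}^1_R(\Omega^jF,M)=0$. Add to $N_{\alpha+1}$ the images of all these extensions, so that every such map extends into $N_{\alpha+1}$.

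The count of maps is at most $(\nu)^{<\lambda}\cdot|R|^{<\lambda}=\nu$ by the hypothesis $\nu^{<\lambda}=\nu$, so $|N_{\alpha+1}|\leq\nu$. At limit stages we take unions.

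By regularity of $\lambda$, any map from the $<\lambda$-generated module $\Omega^{j+1}F$ to $N$ factors through some $N_\alpha$, and it then extends into $N_{\alpha+1}\subseteq N$. Hence $\operatorname{Ext}^1_R(\Omega^jF,N)=0$ for all $j\geq 0$, so $N\in\mathsf{SfpInj}(R)$.

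This is the delicate step. It needs the $\lambda$-presentability of syzygies from Step 1, and it needs care that $|N|\leq\nu$ gives $\nu^+$-presentability, that is, $<\nu^+$-presentation, which holds since $|R|\leq\nu$.

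\textbf{Step 3: accessibility.} Let $\mathcal P$ be the poset of $\nu^{+}$-presentable strongly FP-injective submodules of $M$. It covers $M$ by Step 2 applied to singletons. It is $\nu^+$-directed: given at most $\nu$ members, apply Step 2 to their union, which has cardinality $\leq\nu\cdot\nu=\nu$. Thus $M$ is the $\nu^+$-directed union of $\mathcal P$.

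Since $\lambda\leq\nu^+$, Step 1 shows that $\mathsf{SfpInj}(R)$ is closed in $R$-Mod under $\nu^+$-directed colimits, and in particular under colimits of $\lambda$-indexed chains. Therefore $\nu^+$-presentability in $\mathsf{SfpInj}(R)$ agrees with $\nu^+$-presentability in $R$-Mod. The full subcategory $\mathsf{SfpInj}(R)$ is then $\nu^+$-accessible, with a set of representatives of the modules of cardinality $\leq\nu$ as generators.

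Conversely, any $\nu^+$-presentable object $M$ of the subcategory is a retract of a member of its own $\mathcal P$: the identity of $M$ factors through some member of the $\nu^+$-directed union. Hence $M$ is $\nu^+$-presentable in $R$-Mod, which gives the identification of the $\nu^+$-presentable objects.
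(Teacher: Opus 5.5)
Your proof is correct, but it takes a different route from the paper. The paper's proof is a reduction. Left $<\lambda$-coherence is used only to choose projective resolutions of the finitely presented modules with all terms $<\lambda$-generated. With these choices, the set $\mathcal{T}$ of Proposition~\ref{module-pair} is plugged, as $\mathbf{S}=\mathcal{T}$, into the general result \cite[Proposition 3.6]{PO}, which gives all the assertions at once.

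Your Step 1 supplies exactly this input, including your own correction that for uncountable $\lambda$ the terms must be free of rank $<\lambda$ rather than finitely generated. Instead of citing, you then prove the accessibility statement directly:
\begin{itemize}
\item $\operatorname{Ext}^1_R(\Omega^jF,-)$ commutes with $\lambda$-directed colimits, being a cokernel of Hom functors out of $\lambda$-presentable modules.
\item A L\"owenheim--Skolem closure along a $\lambda$-chain produces, inside any $M\in\mathsf{SfpInj}(R)$, a strongly FP-injective submodule of size at most $\nu$ containing a prescribed subset of size at most $\nu$. Here $\nu^{<\lambda}=\nu$ bounds the number of maps out of the $<\lambda$-generated syzygies, and regularity of $\lambda$ makes every such map into $N$ factor through some $N_\alpha$.
\item The $\nu^{+}$-directedness of $\mathcal{P}$ and the retract argument then give accessibility and identify the $\nu^{+}$-presentable objects.
\end{itemize}
The paper's route is shorter and inherits the generality of the cited result. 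Yours is self-contained, shows exactly where each hypothesis enters, and makes the small-submodule closure explicit.

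When writing it up, tidy the following points:
\begin{itemize}
\item State that $\nu^{<\lambda}=\nu$ forces $\lambda\le\nu$, since otherwise $\nu^{\nu}>\nu$ would occur. This is what gives $|N|\le\lambda\cdot\nu=\nu$.
\item Include the at most $\nu$ pairs $(F,j)$ in your count of maps.
\item Closure under colimits of $\lambda$-indexed chains follows directly from Step 1, because such chains are $\lambda$-directed. It does not follow ``in particular'' from $\nu^{+}$-directed closure.
\item Drop the remark about needing $M/N\in\mathsf{SfpInj}(R)$; it plays no role in the construction.
\end{itemize}
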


\begin{proof} Let $\mathcal{T}$ be as in Proposition \ref{module-pair}. Since $R$ is $<\lambda$-coherent every finitely presented module $F$ has a projective resolution \[\cdots\rightarrow P_m\rightarrow\cdots\rightarrow P_1\rightarrow P_0\rightarrow F\rightarrow 0\] with all $P_i$ having fewer than $\lambda$ generators. Using these suitable projective resolutions to define the set $\mathcal{T}$ of Proposition \ref{module-pair} the statement follows by setting $\mathbf{S}=\mathcal{T}$ in \cite[Proposition 3.6]{PO}.

\end{proof}

\begin{Lemma} The category of pure acyclic complexes over a ring is $\aleph_0$-accessible. The finitely presentable complexes of this category are the bounded contractible complexes of finitely presented modules. 
    
\end{Lemma}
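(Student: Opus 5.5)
The plan is to identify pure acyclic complexes with directed colimits of finitely presented contractible complexes, and then apply the standard characterization of finitely accessible full subcategories of a locally finitely presentable category. Let $\mathcal{P}$ denote the full subcategory of $\operatorname{Ch}(R)$ consisting of pure acyclic complexes. Let $\mathcal{A}$ be a set of representatives of bounded contractible complexes of finitely presented modules. Every such complex is a finite direct sum of disks $D^n(F)$ with $F$ finitely presented.

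\textbf{Step 1: the relevant structural facts.}
\begin{enumerate}[label=(\alph*)]
\item $\mathcal{P}$ is closed under directed colimits in $\operatorname{Ch}(R)$. This holds because tensor products and homology commute with directed colimits, and pure acyclicity means that $M\otimes_R C$ is acyclic for every right module $M$.
\item Every object of $\mathcal{A}$ is pure acyclic, since it is contractible.
\item By \cite[Corollary 4.6]{CH}, objects of $\mathcal{A}$ are finitely presentable in $\operatorname{Ch}(R)$. Since $\mathcal{P}$ is closed under directed colimits, they are also finitely presentable in $\mathcal{P}$.
\end{enumerate}

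\textbf{Step 2: the key claim.} We need that every pure acyclic complex is a directed colimit of objects of $\mathcal{A}$. By the Lazard-type criterion (\cite[Proposition 2.4/Theorem 2.26]{AR}, or Crawley-Boevey's criterion), it suffices to prove the following. For a pure acyclic complex $C$, every chain map $f\colon F\to C$ with $F$ finitely presented factors through some object of $\mathcal{A}$. Indeed, the comma category of maps from finitely presentable objects to $C$ is filtered with colimit $C$. The maps that factor through $\mathcal{A}$ are cofinal, and the full subcategory on them remains filtered, because finite direct sums stay in $\mathcal{A}$ and coequalizing can be done inside the factorization.

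\textbf{Step 3: the factorization.} Here we use the characterization already used in the proof of Theorem~\ref{Theor} via \cite[Proposition 2.8]{E}. A complex $C$ is pure acyclic if and only if every chain map from a finitely presented complex into $C$ is null-homotopic. So write $f = dh + hd$ with $h_n\colon F_n\to C_{n+1}$.

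Now use the cone-like construction: $f$ factors as $F \to \operatorname{Cone}(1_F)[-1]$, or more concretely through $\bigoplus_n D^{n}(F_n)$. The map $F\to \bigoplus_n D^{n}(F_n)$ (with appropriate indexing) is the canonical inclusion into the contractible cone of $1_F$. The map $\bigoplus_n D^{n}(F_n)\to C$ is determined, through the adjunction $\operatorname{Hom}(D^n(M),X)\cong\operatorname{Hom}_R(M,X_{n+1})$, by the homotopy $h$. Since $F$ is bounded with finitely presented components, $\bigoplus_n D^n(F_n)$ is a finite sum and lies in $\mathcal{A}$.

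\textbf{Step 4: the finitely presentable objects.} Objects of $\mathcal{P}$ that are finitely presentable in $\mathcal{P}$ are retracts of objects of $\mathcal{A}$, by the standard argument: such an object is a directed colimit of objects of $\mathcal{A}$, and finite presentability makes the identity factor through one stage. A retract of a bounded contractible complex of finitely presented modules is again bounded and contractible with finitely presented components. Thus $\mathcal{A}$ is, up to isomorphism, exactly the class of finitely presentable objects.

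The main obstacle is the bookkeeping in Step 3: we must check that the homotopy-defined map from the disks really is a chain map whose composite with the inclusion equals $f$ (sign and index conventions). We must also check that the filtered-subcategory argument of Step 2 is legitimate. If the latter is delicate, I would instead invoke directly the criterion that a full subcategory closed under directed colimits in a locally finitely presentable category, in which every map from a finitely presentable object factors through a chosen set of finitely presentable members, is finitely accessible and generated by that set.
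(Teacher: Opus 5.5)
Your argument is correct, and its frame is the same as the paper's: closure of pure acyclic complexes under directed colimits, finite presentability of the bounded contractible complexes of finitely presented modules, and the retract argument identifying the finitely presentable objects. The difference is the key step. The paper quotes Proposition 2.2(vii) of \cite{E} for the fact that every pure acyclic complex is a directed colimit of bounded contractible complexes of finitely presented modules; you derive it. You combine the null-homotopy characterization of pure acyclicity (the one used in Theorem~\ref{Theor}) with the observation that a null-homotopy $f=dh+hd$ out of $F$ is the same as an extension of $f$ along the chain map $F\to\bigoplus_n D^n(F_n)$, $x\mapsto(x,dx)$. You then apply the Lenzing/Crawley-Boevey factorization criterion, and your filteredness check is right: the coequalizer of two maps between bounded complexes of finitely presented modules is again such a complex, so its map to $C$ factors through $\mathcal{A}$. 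The bookkeeping you were worried about works out. With the paper's convention for $D^n$, the map $\bigoplus_n D^n(F_n)\to C$ determined by the $h_n$ is $(a,b)\mapsto dh_n(a)+h_{n-1}(b)$ on $F_n\oplus F_{n-1}$ in degree $n$, and composing with the inclusion gives $dh_n+h_{n-1}d=f_n$. Your route is self-contained and explains why bounded contractible complexes are the natural generators: they are the cones through which null-homotopic maps factor. It can even be made independent of \cite{E}. For bounded $F$, build the homotopy from the bottom degree up by lifting $f_n-h_{n-1}d$, which lands in $Z_nC=B_nC$, along the pure epimorphism $C_{n+1}\to B_nC$. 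The paper's route is shorter but puts all the content into the citation.
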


\begin{proof} It is well known that the pure acyclic complexes are closed under directed colimits and that they contain all contractible complexes. In the introduction of this section we already mentioned that the bounded complexes of finitely presented modules are the finitely presentable objects of the full category of complexes, so, in particular, the bounded contractible complexes of finitely presented modules are finitely presentable objects in the category of pure acyclic complexes. To establish that every pure acyclic complex is a directed colimit of bounded contractible complexes of finitely presented modules we invoke \cite[Proposition 2.2(vii)]{E}. Finally, every finitely presentable object in the category of pure acyclic complexes must be a direct summand of a bounded contractible complex of finitely presented modules (see, for example, \cite[Proposition 1.1]{PO}) and since the bounded contractible complexes of finitely presented modules are closed under direct summands we deduce that the finitely presentable objects in our category of interest are exactly those described in the statement.
\end{proof}

\begin{Definition} For two regular infinite cardinals $\lambda$ and $\mu$ with $\lambda<\mu$ we say that $\lambda$ is sharply smaller than $\mu$ if every $\lambda$-accessible category is $\mu$-accessible. We write $ \lambda\triangleleft\mu$.
    
\end{Definition}
    
\begin{Corollary} For every uncountable infinite regular cardinal $\kappa$ the category of pure acyclic complexes is $\kappa$-accessible. The $\kappa$-presentable objects are exactly the (unbounded and bounded) pure acyclic complexes of $<\kappa$-presented modules.
    
\end{Corollary}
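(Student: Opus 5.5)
We deduce the statement from the preceding Lemma together with the uniformization theory of accessible categories (Adámek–Rosický, \cite[Theorem 2.11 and Remark 2.13]{AR}). The category of pure acyclic complexes has just been shown to be $\aleph_0$-accessible. The key fact is that $\aleph_0\triangleleft\kappa$ for every uncountable regular cardinal $\kappa$. Indeed, $\lambda\triangleleft\mu$ holds whenever $\lambda<\mu$ and, for every $\mu'<\mu$, one has $\mu'^{<\lambda}<\mu$. For $\lambda=\aleph_0$ this reads: $\mu'^{n}<\kappa$ for all finite $n$ and all $\mu'<\kappa$. This is immediate, since $\mu'^{n}=\mu'$ for infinite $\mu'$ and is finite otherwise. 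Hence every finitely accessible category is $\kappa$-accessible, and in particular the category of pure acyclic complexes is $\kappa$-accessible.

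The identification of the $\kappa$-presentable objects also comes from the uniformization theorem. When an $\aleph_0$-accessible category $\mathcal K$ is viewed as $\kappa$-accessible, its $\kappa$-presentable objects are exactly the retracts of colimits of $<\kappa$-small $\aleph_0$-directed diagrams of finitely presentable objects. By the Lemma, these finitely presentable objects are the bounded contractible complexes of finitely presented modules.

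Two inclusions then have to be checked.

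\emph{Such objects are complexes of $<\kappa$-presented modules.} A colimit of fewer than $\kappa$ bounded complexes of finitely presented modules is, degreewise, a colimit of fewer than $\kappa$ finitely presented modules. It is therefore $<\kappa$-presented, because $\kappa$ is regular and $<\kappa$-presented modules are closed under $<\kappa$ colimits. Retracts preserve this property. The result is also pure acyclic, since pure acyclic complexes are closed under directed colimits and direct summands.

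\emph{Conversely, every pure acyclic complex $C$ of $<\kappa$-presented modules is $\kappa$-presentable.} Since $\kappa$ is uncountable, \cite[Lemma 1.5(c)]{PO} shows that $C$ is $\kappa$-presentable in $\operatorname{Ch}(R)$. The subcategory of pure acyclic complexes is full and closed under directed colimits in $\operatorname{Ch}(R)$, so $\kappa$-directed colimits are computed in the ambient category. Hence $\operatorname{Hom}(C,-)$ restricted to the subcategory still preserves $\kappa$-directed colimits, and $C$ is $\kappa$-presentable there. This is the cleaner direction.

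The first inclusion can alternatively be obtained directly. If $C$ is $\kappa$-presentable in the subcategory, write it as a $\kappa$-directed colimit of subobjects of the above type, using the $\kappa$-accessibility just established. The identity of $C$ then factors through one of them, so $C$ is a retract of a complex of $<\kappa$-presented modules, and hence is one itself.

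The main obstacle is the sharp-inequality step $\aleph_0\triangleleft\kappa$ and correctly citing the uniformization theorem in \cite{AR}. The only subtle point there is that the description of the $\kappa$-presentable objects in the uniformized category genuinely requires $\kappa$ to be uncountable. This is also why the statement excludes $\kappa=\aleph_0$, where the finitely presentable objects are only the bounded ones.
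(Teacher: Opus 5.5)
Your proposal is correct and takes essentially the same route as the paper. Both arguments use $\aleph_0\triangleleft\kappa$ to obtain $\kappa$-accessibility, \cite[Lemma 1.5(c)]{PO} together with closure under directed colimits for one inclusion, and the description of $\kappa$-presentable objects as split subobjects of $\kappa$-small directed colimits of finitely presentable objects \cite[Remark 2.15]{AR} for the other. The differences are cosmetic: you derive $\aleph_0\triangleleft\kappa$ from the cardinal-arithmetic criterion instead of citing \cite[Example 2.13(1)]{AR}, and you check $<\kappa$-presentedness degreewise rather than passing through $\kappa$-presentability in $\operatorname{Ch}(R)$ via \cite[Proposition 1.16]{AR}.
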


    \begin{proof} Since the category of pure acyclic complexes is $\aleph_0$-accessible and $\aleph_0$ is sharply smaller than every regular uncountable cardinal $\kappa$ by \cite[Example 2.13(1)]{AR}, we have that the category of pure acyclic complexes is $\kappa$-accessible for every uncountable regular cardinal $\kappa$. 

    \cite[Lemma 1.5(c)]{PO} informs us that the $\kappa$-presentable objects in the category of complexes are exactly the (unbounded and bounded) complexes of $<\kappa$-presented modules. This means, in particular, that all (unbounded and bounded) pure acyclic complexes of $<\kappa$-presented modules are $\kappa$-presentable in the category of complexes which in turn means that they are $\kappa$-presentable in the category of pure acyclic complexes.
    
    Conversely, since the category of pure acyclic complexes is $\aleph_0$-accessible we know that every $\kappa$-presentable object in this category is a split subobject of a $\kappa$-small directed colimit of  finitely presentable objects (see \cite[Remark 2.15]{AR}). Since the finitely presentable objects in the category of pure acyclic complexes are also finitely presentable in the larger category of complexes, by \cite[Proposition 1.16]{AR}, we get that every $\kappa$-small directed colimit of finitely presentable objects (finitely presentable objects are $\lambda$-presentable for each regular infinite cardinal $\lambda$) is $\kappa$-presentable. So, every $\kappa$-presentable object in the category of pure acyclic complexes is a split subobject of a $\kappa$-presentable object in the category of complexes. Finally, it is easy to see that every split subobject of a $\kappa$-presentable object in the category of complexes is in fact $\kappa$-presentable.
        
    \end{proof}

\begin{Proposition}[Pseudopullback]\label{Pseudopullback}
Let $\nu$ be a regular cardinal and $\lambda$ be a smaller infinite cardinal. Consider the $\nu$-accessible categories $\mathcal{A},\mathcal{B}$ and $C$ in which all $\lambda$-indexed chains (of objects and morphisms) have colimits. Let $\Theta_\mathcal{A}:\mathcal{A}\rightarrow C$  and $\Theta_\mathcal{B}:\mathcal{B}\rightarrow C$ be two functors preserving $\nu$-directed colimits and colimits of $\lambda$-indexed chains, and taking $\nu$-presentable objects to $\nu$-presentable objects. Then the pseudopullback category, $\mathbf{P}$, of $\Theta_\mathcal{A}$ and $\Theta_\mathcal{B}$ is defined as the category of triples $(A,B,\phi)$, where $A\in\mathcal{A},B\in\mathcal{B}$ and $\phi:\Theta_\mathcal{A}(A)\rightarrow\Theta_\mathcal{B}(B)$ is an isomorphism in $C$. The pseudopullback  $\mathbf{P}$ is a $\nu$-accessible category. The $\nu$-presentable objects are exactly all the triples $(A,B,\phi)$ such that $A$ is a $\nu$-presentable object in $\mathcal{A}$ and $B$ is a $\nu$-presentable object in $\mathcal{B}$.

\end{Proposition}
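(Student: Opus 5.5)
The plan is to follow the standard back-and-forth argument for pseudopullbacks of accessible categories, as in Centazzo–Rosický and Positselski. I would proceed in four steps.

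\emph{Step 1: colimits in $\mathbf{P}$.} I first check that $\mathbf{P}$ has $\nu$-directed colimits and colimits of $\lambda$-indexed chains, computed componentwise. Given such a diagram $(A_i,B_i,\phi_i)$, put $A=\operatorname{colim}A_i$ and $B=\operatorname{colim}B_i$. Since $\Theta_\mathcal{A}$ and $\Theta_\mathcal{B}$ preserve these colimits, the compatible isomorphisms $\phi_i$ induce an isomorphism $\phi:\Theta_\mathcal{A}(A)\to\Theta_\mathcal{B}(B)$. The universal property in $\mathbf{P}$ then follows from the universal properties in $\mathcal{A}$ and $\mathcal{B}$, because a morphism of triples is a pair of morphisms satisfying one commutativity condition.

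\emph{Step 2: triples of $\nu$-presentables are $\nu$-presentable.} Let $A$ be $\nu$-presentable in $\mathcal{A}$ and $B$ be $\nu$-presentable in $\mathcal{B}$. Then $\operatorname{Hom}_\mathbf{P}((A,B,\phi),-)$ is the equalizer of the two maps
\[
\operatorname{Hom}_\mathcal{A}(A,-)\times\operatorname{Hom}_\mathcal{B}(B,-)\rightrightarrows\operatorname{Hom}_C(\Theta_\mathcal{A}A,\Theta_\mathcal{B}(-)).
\]
Here $\Theta_\mathcal{A}A$ is $\nu$-presentable and $\Theta_\mathcal{B}$ preserves $\nu$-directed colimits. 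Since $\nu$-directed colimits commute with finite limits in $\mathbf{Set}$, this Hom-functor preserves $\nu$-directed colimits. Conversely, once Step 3 is done, every $\nu$-presentable object of $\mathbf{P}$ is a retract of such a triple. Its components are then retracts of $\nu$-presentable objects, hence $\nu$-presentable. This gives the description of the $\nu$-presentable objects.

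\emph{Step 3: every object is a $\nu$-directed colimit of such triples.} I expect this to be the main obstacle. Fix $(A,B,\phi)$ and write $A=\operatorname{colim}A_s$ and $B=\operatorname{colim}B_t$ as $\nu$-directed colimits of $\nu$-presentables. I would build $\lambda$-indexed chains by back-and-forth.
\begin{itemize}
\item Start with some $A^0=A_{s_0}$. Since $\Theta_\mathcal{A}A^0$ is $\nu$-presentable, the map $\Theta_\mathcal{A}A^0\to\Theta_\mathcal{A}A\xrightarrow{\phi}\Theta_\mathcal{B}B$ factors through $\Theta_\mathcal{B}B^0$ for some $B^0=B_{t_0}$.
\item Symmetrically, $\phi^{-1}$ restricted to $\Theta_\mathcal{B}B^0$ factors through $\Theta_\mathcal{A}A^1$ for some $A^1$ further along the diagram. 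At this stage I also enlarge indices so that the composites agree: by $\nu$-presentability, two maps into a $\nu$-directed colimit that agree in the colimit already agree at some stage.
\item Continue transfinitely along $\lambda$, taking upper bounds at limit stages, and let $A'=\operatorname{colim}A^\alpha$ and $B'=\operatorname{colim}B^\alpha$.
\end{itemize}
Because $\Theta_\mathcal{A}$ and $\Theta_\mathcal{B}$ preserve colimits of $\lambda$-chains, the interleaved maps yield an isomorphism $\phi':\Theta_\mathcal{A}A'\cong\Theta_\mathcal{B}B'$ compatible with $\phi$. Because $\lambda<\nu$ and $\nu$ is regular, $A'$ and $B'$ are colimits of fewer than $\nu$ $\nu$-presentable objects, hence $\nu$-presentable.

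\emph{Step 4: cofinality.} Next I show that the comma category of triples $(A',B',\phi')$ of $\nu$-presentables over $(A,B,\phi)$ is $\nu$-filtered. Given fewer than $\nu$ such triples, or a parallel pair between them, I start the back-and-forth above from an $A_s$ through which all the relevant maps factor, equalizing where necessary. Then $(A,B,\phi)$ is the colimit of this comma category, since its $\mathcal{A}$- and $\mathcal{B}$-projections are cofinal in the canonical diagrams of $A$ and $B$. I expect the careful bookkeeping of the equalization at each step to be the delicate point. I would handle it by interleaving, at each successor stage, both the factorization step and the step that makes the two composites $\Theta_\mathcal{A}A^\alpha\to\Theta_\mathcal{B}B$ agree.
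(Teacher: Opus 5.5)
Your proof is correct, and it does more than the paper. The paper gives no argument for this proposition. It only invokes Positselski's Corollary~5.1, and points also to Ulmer and to Raptis's Theorem~2.2 with Chorny's Proposition~3.1 as proof. What you have written is a self-contained version of the back-and-forth argument used in those sources. Running the construction along $\lambda$-indexed chains rather than $\omega$-chains is necessary, since only colimits of $\lambda$-chains are assumed. This is also what the later applications need, where $\mathsf{SfpInj}(R)$ is only guaranteed to be closed under $\lambda$-directed colimits.

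The citation buys brevity. Your version shows where each hypothesis enters:
\begin{itemize}
\item preservation of $\nu$-presentables gives Step~2 and the factorizations in Step~3;
\item $\lambda<\nu$ makes the chain colimits $\nu$-presentable;
\item preservation of $\lambda$-chain colimits turns the interleaved maps into $\phi'$.
\end{itemize}
It also never uses accessibility of $C$.

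When you write it out, tighten three points.
\begin{itemize}
\item At stage $\alpha$, the condition to enforce is that the two maps $\Theta_{\mathcal{A}}A^\beta\to\Theta_{\mathcal{B}}B^\alpha$ agree for every $\beta<\alpha$, and dually for the backward maps. Agreement after composing to $\Theta_{\mathcal{B}}B$ is automatic. It is $\nu$-presentability of $\Theta_{\mathcal{A}}A^\beta$, together with $|\alpha|<\nu$ and $\nu$-directedness, that pushes it down to a single stage.
\item In Step~4, $\nu$-filteredness requires coequalizing families of fewer than $\nu$ parallel morphisms, not just pairs. A cocone over triples $(A'_i,B'_i,\phi'_i)$ must also satisfy $\phi''\circ\Theta_{\mathcal{A}}(A'_i\to A'')=\Theta_{\mathcal{B}}(B'_i\to B'')\circ\phi'_i$. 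This is again arranged stagewise using $\nu$-presentability of $\Theta_{\mathcal{A}}A'_i$.
\item Form the comma category over a set of representatives of triples with $\nu$-presentable components. This is a set because, for fixed $A'$ and $B'$, the isomorphisms $\phi'$ form a set.
\end{itemize}
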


\begin{proof} We refer the reader to \cite[Corollary 5.1]{POlimits}. The following references are also appropriate: \cite[Remark 3.2(I), Theorem 3.8, Corollary 3.9 and Remark 3.11(II)]{Ulmer}, \cite[Theorem 2.2]{Raptis} with the proof in \cite[Proposition 3.1]{Chorny}.  
    
\end{proof}

\begin{Theorem}\label{pacofsfpinj} Let $R$ be a left $<\lambda$-coherent ring where $\lambda$ is a regular cardinal. Let $\nu$ be a cardinal such that $|R| + \aleph_0\leq \nu $ and $\nu^{<\lambda}=\nu$. Then the category of strongly FP-injective complexes is closed under $\lambda$-directed colimits. In addition, the category of strongly FP-injective complexes is $\nu^+$-accessible. The $\nu^+$-presentable objects are exactly the strongly FP-injective complexes of $<\nu^+$-presented modules.
    
\end{Theorem}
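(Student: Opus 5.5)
Theorem~\ref{Theor}(iii) identifies strongly FP-injective complexes with pure acyclic complexes whose components are strongly FP-injective. We therefore realize the category as a pseudopullback and apply Proposition~\ref{Pseudopullback}.

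Set $\mathcal A$ to be the category of pure acyclic complexes and $\mathcal B=\prod_{n\in\mathbb Z}\mathsf{SfpInj}(R)$, the category of $\mathbb Z$-indexed families of strongly FP-injective modules. Let $\mathcal C=\prod_{n\in\mathbb Z}R\text{-Mod}$. Take $\Theta_{\mathcal A}$ to be the functor sending a complex to its family of components, and $\Theta_{\mathcal B}$ the inclusion. The pseudopullback $\mathbf P$ consists of triples $(A,(B_n),\phi)$ with $\phi$ a family of isomorphisms $A_n\cong B_n$. It is equivalent to the full subcategory of $\operatorname{Ch}(R)$ of pure acyclic complexes with components in $\mathsf{SfpInj}(R)$, that is, to $\widetilde{\mathsf{SfpInj}}(R)$ by Theorem~\ref{Theor}. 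The equivalence sends a triple to $A$, and conversely sends $C$ to $(C,(C_n),\mathrm{id})$.

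Next we verify the hypotheses with $\nu^+$ in place of the regular cardinal $\nu$ of Proposition~\ref{Pseudopullback}, keeping the given $\lambda$.
\begin{enumerate}[label=(\alph*)]
\item $\mathcal A$ is $\nu^+$-accessible by the Corollary on pure acyclic complexes, since $\nu^+$ is uncountable regular. Its $\nu^+$-presentables are the pure acyclic complexes of $<\nu^+$-presented modules.
\item $\mathsf{SfpInj}(R)$ is $\nu^+$-accessible with colimits of $\lambda$-indexed chains by Proposition~\ref{Posfpinj}. A countable product of $\nu^+$-accessible categories is $\nu^+$-accessible, since $\nu^+>\aleph_0$ and $\nu^+$-directed colimits are computed componentwise. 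Its $\nu^+$-presentables are the families of $\nu^+$-presentable objects: an object $(X_n)$ is a $\nu^+$-directed colimit of families that agree with $(X_n)$ on finitely many coordinates and are zero elsewhere. The countable-support issue needs a brief check using regularity of $\nu^+>\aleph_0$.
\item $\mathcal C$ is locally $\nu^+$-presentable.
\item Both functors preserve $\nu^+$-directed colimits and colimits of $\lambda$-chains, because colimits in all three categories are computed degreewise in $R$-Mod. Pure acyclic complexes are closed under all directed colimits. $\mathsf{SfpInj}(R)$ is closed under $\lambda$-directed colimits by Proposition~\ref{Posfpinj}, and in particular under $\lambda$-chains and $\nu^+$-directed colimits since $\nu^+>\lambda$.
\item Both functors preserve $\nu^+$-presentability, by the descriptions in (a) and (b).
\end{enumerate}

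From the same degreewise computation of colimits, we also get the first claim. A $\lambda$-directed colimit of strongly FP-injective complexes is pure acyclic, since pure acyclicity is closed under direct limits. Its components are $\lambda$-directed colimits of modules in $\mathsf{SfpInj}(R)$, hence lie in $\mathsf{SfpInj}(R)$ by Proposition~\ref{Posfpinj}. So by Theorem~\ref{Theor} the colimit is strongly FP-injective.

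Proposition~\ref{Pseudopullback} now gives that $\mathbf P$ is $\nu^+$-accessible, with $\nu^+$-presentables the triples in which both entries are $\nu^+$-presentable. Translated through the equivalence, these are the strongly FP-injective complexes all of whose components are $<\nu^+$-presented. I expect the main obstacle to be the bookkeeping in (b): showing that the $\mathbb Z$-indexed product of $\nu^+$-accessible categories is $\nu^+$-accessible with the expected presentable objects. I would handle it by the finite-support decomposition indicated there, or by citing the standard closure of accessible categories under small products for uncountable $\nu^+$ \cite{AR}.
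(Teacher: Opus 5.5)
Your proposal is correct and follows the paper's own route. The paper uses the same pseudopullback of the component functor on pure acyclic complexes and the inclusion $\prod_{n\in\mathbb Z}\mathsf{SfpInj}(R)\to\prod_{n\in\mathbb Z}R\text{-Mod}$, identified with $\widetilde{\mathsf{SfpInj}}(R)$ via Theorem~\ref{Theor}(iii). It gets closure under $\lambda$-directed colimits the same way, from Proposition~\ref{Posfpinj} and closure of pure acyclicity under direct limits. You verify the hypotheses of Proposition~\ref{Pseudopullback} more explicitly than the paper does.

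One caution on (b): the finite-support sketch does not work. Truncations indexed by finite subsets of $\mathbb Z$ form a directed but not $\aleph_1$-directed system. Moreover, such a presentation would force $\nu^+$-presentable families to have finite support, which contradicts the description you want. Instead, use the product $\prod_n D_n$ of $\nu^+$-directed index posets presenting each coordinate, or simply cite closure of $\nu^+$-accessible categories under countable products, as the paper does with \cite[Proposition 1.2]{PO}.
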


\begin{proof} Let $C=\prod_{n\in\mathbb{Z}}R\text{-}Mod$. Also let $\Theta_{1}:Pureac\rightarrow C$ be the functor from the category of pure acyclic complexes to the category $C$ that sends a pure acyclic complex $A$ to the collection $(A_n)_{n\in\mathbb{Z}}$. Now consider the inclusion functor $\Theta_{2}:\prod_{n\in\mathbb{Z}}\mathsf{SfpInj}(R)\rightarrow C$ from the category of $\mathbb{Z}$ copies of the class of strongly FP-injective modules to $C$. Then the pseudopullback of $\Theta_1$ and $\Theta_2$ is equivalent to the category of strongly FP-injective complexes.

The category $C$ is $\nu^+$-accessible by \cite[Proposition 1.2]{PO}, the category $\prod_{n\in\mathbb{Z}}\mathsf{SfpInj}(R)$ is $\nu^+$-accessible by \cite[Proposition 1.2]{PO} and Proposition \ref{Posfpinj}. In addition, the pseudopullback of $\Theta_1$ and $\Theta_2$ being equivalent to the category of strongly FP-injective complexes stems from the description of strongly FP-injective complexes as pure acyclic complexes of strongly FP-injective modules given in Theorem \ref{Theor}.

Lastly, since pure acyclic complexes are closed under directed colimits and strongly FP-injective modules are closed under $\lambda$-directed colimits by Proposition \ref{Posfpinj} strongly FP-injective complexes must be closed under $\lambda$-directed colimits.
    
\end{proof}

\begin{Remark}
    (i) The above Theorem can be viewed as an analogue of the toy example \cite[Corollary 10.14.]{POlimits} that states that: For any ring $R$ and any uncountable regular cardinal $\kappa$ any pure acyclic complex of flat modules is a $\kappa$-directed colimit of pure acyclic complexes of $\kappa$-presentable flat modules.
    
    (ii) For this analogy to become a duality via Theorem \ref{pacofsfpinj} we need to assume that the ring $R$ is left coherent and of countable cardinality. 
\end{Remark}

\begin{Corollary}\label{Dualwithflats} Let $R$ be a countable left coherent ring and $\kappa$ be any uncountable regular cardinal. Then every pure acyclic complex of absolutely pure modules (In this case absolutely pure modules coincide with strongly FP-injective modules.) is a $\kappa$-directed colimit of pure acyclic complexes of $\kappa$-presentable absolutely pure modules.
    
\end{Corollary}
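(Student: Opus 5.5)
The plan is to specialize Theorem~\ref{pacofsfpinj} to $\lambda=\aleph_0$ and a suitable $\nu$, and then unwind what $\nu^{+}$-presentability means. A left coherent ring is exactly a left $<\aleph_0$-coherent ring (by the Remark after the definition of $<\lambda$-coherence). By \cite[Theorem 4.2]{LGO}, over such a ring absolutely pure and strongly FP-injective modules coincide, which justifies the parenthetical claim in the statement. By Theorem~\ref{Theor}(iii), a pure acyclic complex of absolutely pure modules is then precisely a strongly FP-injective complex. So the statement reduces to: every strongly FP-injective complex is a $\kappa$-directed colimit of strongly FP-injective complexes whose components are $\kappa$-presentable.

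The first step is to choose the cardinals. Since $\kappa$ is uncountable and regular, we can write $\kappa=\nu^{+}$ with $\nu\geq\aleph_0$ only when $\kappa$ is a successor. For $\lambda=\aleph_0$ the condition $\nu^{<\aleph_0}=\nu$ holds for every infinite $\nu$, since $\nu^n=\nu$ for finite $n$. Countability of $R$ gives $|R|+\aleph_0=\aleph_0\leq\nu$. Hence, for successor $\kappa=\nu^{+}$, Theorem~\ref{pacofsfpinj} applies directly. It shows that the category of strongly FP-injective complexes is $\kappa$-accessible, with $\kappa$-presentable objects the strongly FP-injective complexes of $<\kappa$-presented modules. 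By definition of accessibility, every object is then a $\kappa$-directed colimit of such objects. The colimit is computed in the subcategory, but it agrees with the colimit in $\operatorname{Ch}(R)$, because the subcategory is closed under directed colimits by Theorem~\ref{theorem1}(iii) and the coherence of $R$.

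The main obstacle is a limit cardinal $\kappa$ (weakly inaccessible), where no $\nu$ with $\nu^{+}=\kappa$ exists. I would handle this case without the $\nu^{+}$ route, by the argument of the Corollary on pure acyclic complexes. Over a left coherent ring, strongly FP-injective modules are closed under directed colimits, and so are strongly FP-injective complexes (Theorem~\ref{theorem1}). The module class is $\aleph_0$-accessible: for a countable coherent ring it is cogenerated by a countable set of finitely presented modules and their finitely generated syzygies. I would then apply the pseudopullback Proposition~\ref{Pseudopullback} with $\nu=\aleph_0$, or directly \cite[Proposition 3.6]{PO}, to conclude that strongly FP-injective complexes form an $\aleph_1$-accessible category, or even an $\aleph_0$-accessible one if the module class is finitely accessible. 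Since $\aleph_0\triangleleft\kappa$ for every uncountable regular $\kappa$ \cite[Example 2.13(1)]{AR}, and similarly $\aleph_1\triangleleft\kappa$ for many $\kappa$, we obtain $\kappa$-accessibility.

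The $\kappa$-presentable objects are then identified exactly as in the proof of the Corollary on pure acyclic complexes. Each such object is a split subobject of a $\kappa$-small directed colimit of small presentable objects, hence a complex of $<\kappa$-presented modules. The step I expect to be most delicate is justifying the sharply-smaller relation in the limit case. If that fails, I would fall back to writing a $\kappa$-directed colimit as a union over successor cardinals below $\kappa$, using the explicit submodule-poset description in Proposition~\ref{Posfpinj}.
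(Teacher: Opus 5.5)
Your treatment of successor cardinals is fine. For $\kappa=\nu^+$ with $\nu\geq\aleph_0$, Theorem~\ref{pacofsfpinj} with $\lambda=\aleph_0$ applies verbatim, and closure under directed colimits identifies the colimit in the subcategory with the one in $\operatorname{Ch}(R)$. The gap is the weakly inaccessible case, which the statement covers, and neither of your routes closes it.

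The claim that the class of absolutely pure modules over a countable coherent ring is $\aleph_0$-accessible is false. For $R=\mathbb{Z}$ it is the category of divisible groups, which has no nonzero finitely presentable objects. For example, the chain $\mathbb{Z}(p^\infty)\xrightarrow{p}\mathbb{Z}(p^\infty)\xrightarrow{p}\cdots$ has colimit $0$, while $\varinjlim\operatorname{Hom}(\mathbb{Z}(p^\infty),-)\cong\mathbb{Q}_p\neq0$. The same failure occurs for $\mathbb{Q}$ with the quotients $\mathbb{Q}/\tfrac{1}{n!}\mathbb{Z}$. Cogeneration by a countable set of finitely presented modules only gives $\aleph_1$-accessibility, which is already Theorem~\ref{pacofsfpinj} with $\nu=\aleph_0$. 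So $\aleph_0\triangleleft\kappa$ is of no use. The relation $\aleph_1\triangleleft\kappa$ is a cardinal-arithmetic condition; it holds, e.g., when $\mu^{\aleph_0}<\kappa$ for all $\mu<\kappa$. You do not prove it for an arbitrary weakly inaccessible $\kappa$, and the fallback via successor cardinals below $\kappa$ is not an argument.

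The missing idea is that no sharply-smaller relation is needed, because the class is closed under \emph{all} directed colimits. This is what the paper does. It applies Theorem~\ref{pacofsfpinj} once with $\lambda=\nu=\aleph_0$ to get $\aleph_1$-accessibility, then runs Beke's argument:
\begin{enumerate}[label=(\arabic*)]
\item Write $X=\varinjlim_{i\in I}X_i$ as an $\aleph_1$-directed colimit of strongly FP-injective complexes of countably presented modules.
\item The directed subsets $J\subseteq I$ with $|J|<\kappa$, ordered by inclusion, form a $\kappa$-directed poset covering $I$. This uses that $\kappa$ is regular and uncountable.
\item Each $X_J=\varinjlim_{i\in J}X_i$ is again a pure acyclic complex of absolutely pure modules, by closure under directed colimits.
\item The components of $X_J$ are colimits of diagrams of size $<\kappa$ of $\kappa$-presentable modules, hence $\kappa$-presentable by \cite[Proposition 1.16]{AR}.
\item Finally $X\cong\varinjlim_J X_J$.
\end{enumerate}
This handles successor and limit $\kappa$ uniformly, so the case split and the set-theoretic issues disappear.
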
 

\begin{proof} Use $\lambda=\nu=\aleph_0$ in Theorem \ref{pacofsfpinj} to ensure that the category of pure acyclic complexes of absolutely pure modules is $\aleph_1$-accessible. Since, for coherent rings, the category of pure acyclic complexes of absolutely pure modules is closed under directed colimits the statement for any uncountable regular cardinal $\kappa$ stems from \cite[proof of Proposition 4.1.]{Beke}.  
    
\end{proof}

It is worth noting that imposing on the ring $R$ the above assumptions also implies that all absolutely pure $R$-modules have injective dimension at most one. Indeed, since $R$ is left coherent all absolutely pure modules are strongly FP-injective modules, while $R$ being countable means that every strongly FP-injective module has injective dimension at most one (see Corollary \ref{Injdimofsfpinj}). In addition, in this case flat modules have projective dimension at most 1 (see Remark \ref{simson}(ii)).

\section{The accessible category of CE-Strongly FP-injective complexes}

In this section we turn our attention to the accessible category of CE-strongly FP-injective complexes. Before doing so we need to deal with the category of complexes of strongly FP-injective modules.

\begin{Theorem}\label{C(sfpinj)}
Let $R$ be a left $<\lambda$-coherent ring, where $\lambda$
is a regular infinite cardinal. Let $\nu$ be a cardinal
such that
\[
    |R|+\aleph_0\leq \nu
    \qquad\text{and}\qquad
    \nu^{<\lambda}=\nu.
\]
Then the full subcategory
$\operatorname{Ch}(\mathsf{SfpInj}(R))$
of complexes of strongly FP-injective modules is closed
under $\lambda$-directed colimits in $\operatorname{Ch}(R)$
and is $\nu^+$-accessible.

Its $\nu^+$-presentable objects are precisely the complexes
whose components are $\nu^+$-presentable strongly
FP-injective modules. Consequently, every complex of
strongly FP-injective modules is a $\nu^+$-directed colimit
of complexes of $<\nu^+$-presented strongly FP-injective
modules.
\end{Theorem}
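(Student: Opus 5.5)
We will mirror the proof of Theorem~\ref{pacofsfpinj}, this time using the pseudopullback of Proposition~\ref{Pseudopullback} with $\operatorname{Ch}(R)$ in place of the category of pure acyclic complexes. Put $C=\prod_{n\in\mathbb Z}R\text{-Mod}$. Let $\Theta_1:\operatorname{Ch}(R)\to C$ send a complex $X$ to its family of components $(X_n)_{n\in\mathbb Z}$. Let $\Theta_2:\prod_{n\in\mathbb Z}\mathsf{SfpInj}(R)\to C$ be the inclusion. An object of the pseudopullback is a triple $(X,(M_n),\phi)$ in which $\phi$ identifies each $X_n$ with a strongly FP-injective module. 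Transporting the differentials along $\phi$ shows that the pseudopullback is equivalent to $\operatorname{Ch}(\mathsf{SfpInj}(R))$. Morphisms also match, because $\Theta_2$ is fully faithful.

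Next we check the hypotheses with the regular cardinal $\nu^+$ and the smaller cardinal $\lambda$. Accessibility comes from earlier results in the paper. $\operatorname{Ch}(R)$ is locally $\nu^+$-presentable, with its $\nu^+$-presentable objects the complexes of $<\nu^+$-presented modules by \cite[Lemma 1.5(c)]{PO}; this uses that $\nu^+$ is uncountable. $C$ and $\prod_{n}\mathsf{SfpInj}(R)$ are $\nu^+$-accessible by \cite[Proposition 1.2]{PO} together with Proposition~\ref{Posfpinj}. For products, the $\nu^+$-presentable objects are the families of $\nu^+$-presentable objects; this is where countability of $\mathbb Z$ and $\nu^+>\aleph_0$ enter. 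Colimits of $\lambda$-indexed chains exist in all three categories. In $\operatorname{Ch}(R)$ and $C$ they are computed degreewise. In $\prod\mathsf{SfpInj}(R)$ they exist because Proposition~\ref{Posfpinj} gives closure of $\mathsf{SfpInj}(R)$ under $\lambda$-directed colimits, and a $\lambda$-chain is $\lambda$-directed by regularity of $\lambda$. Both functors preserve $\nu^+$-directed colimits and $\lambda$-chain colimits, since colimits in $\operatorname{Ch}(R)$ are degreewise and $\mathsf{SfpInj}(R)$ is closed in $R$-Mod under $\lambda$-directed colimits, hence under $\nu^+$-directed ones. Both functors also send $\nu^+$-presentable objects to $\nu^+$-presentable objects, by the descriptions just recalled.

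Proposition~\ref{Pseudopullback} then gives $\nu^+$-accessibility. It also identifies the $\nu^+$-presentable objects as the triples in which both $X$ and $(M_n)$ are $\nu^+$-presentable. These are exactly the complexes whose components are $<\nu^+$-presented strongly FP-injective modules. The last assertion of the theorem is the definition of $\nu^+$-accessibility, with presentable objects as just described.

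Closure under $\lambda$-directed colimits in $\operatorname{Ch}(R)$ is immediate: these colimits are computed degreewise, and Proposition~\ref{Posfpinj} applies in each degree. The main point requiring care is the identification of the presentable objects in the product categories and in $\operatorname{Ch}(R)$ for the uncountable cardinal $\nu^+$. It also has to be checked that the equivalence with the pseudopullback preserves the relevant colimits, so that presentability in the pseudopullback coincides with presentability in $\operatorname{Ch}(\mathsf{SfpInj}(R))$. I expect the latter to follow from the equivalence being an equivalence of categories. Presentability in the full subcategory then follows because the subcategory is closed under $\nu^+$-directed colimits.
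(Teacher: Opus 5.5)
Your proposal is correct and follows essentially the same route as the paper. Both take the pseudopullback of the componentwise functor $\operatorname{Ch}(R)\to\prod_{n\in\mathbb Z}R\text{-Mod}$ and the inclusion of $\prod_{n\in\mathbb Z}\mathsf{SfpInj}(R)$, verify the hypotheses of Proposition~\ref{Pseudopullback} using Proposition~\ref{Posfpinj}, product accessibility and the componentwise description of $\nu^+$-presentable complexes, and prove closure under $\lambda$-directed colimits degreewise; your extra checks (that $\lambda$-chains are $\lambda$-directed by regularity, and that the equivalence with the pseudopullback transfers presentability) only make explicit details the paper leaves implicit.
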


\begin{proof}
Put $\kappa=\nu^+$. The cardinal assumptions imply
$\lambda\leq\nu<\kappa$, and $\kappa$ is uncountable.
Consider the functors
\[
    U:\operatorname{Ch}(R)
      \longrightarrow \prod_{n\in\mathbb Z}R\text{-Mod},
    \qquad
    U(X)=(X_n)_{n\in\mathbb Z},
\]
and
\[
    V:\prod_{n\in\mathbb Z}\mathsf{SfpInj}(R)
      \longrightarrow \prod_{n\in\mathbb Z}R\text{-Mod},
\]
where $V$ is the inclusion. Their pseudopullback is
equivalent to $\operatorname{Ch}(\mathsf{SfpInj}(R))$.

The category $\operatorname{Ch}(R)$ is locally
$\kappa$-presentable. By Proposition \ref{Posfpinj} and the
accessibility theorem for products, the other two
categories are $\kappa$-accessible. Here we use
$|\mathbb Z|<\kappa$.

All three categories admit $\kappa$-directed colimits
and colimits of $\lambda$-indexed chains, and both
functors preserve these colimits, since they are
computed componentwise. Moreover, both functors
preserve $\kappa$-presentable objects: for $U$, this
follows from the componentwise characterization of
$\kappa$-presentable complexes, and for $V$, from
Proposition \ref{Posfpinj} and the corresponding characterization
for products.

The pseudopullback Theorem therefore gives the asserted
accessibility and description of presentable objects.
Finally, closure under $\lambda$-directed colimits
follows componentwise from Proposition \ref{Posfpinj}.
\end{proof}

\begin{Remark}
    (i)\em{(\cite[Corollary 10.4]{POlimits})} There exists an analogous result dealing with flat modules, namely, for any ring $R$ and any uncountable regular cardinal $\kappa$ any complex of flat $R$-modules is a $\kappa$-directed colimit of complexes of $\kappa$-presentable flat modules.
    
    (ii) For this analogy to become a duality via the above Theorem we need to assume that the ring $R$ is left coherent and of countable cardinality. 
\end{Remark}

\begin{Corollary} Let $R$ be a countable left coherent ring and $\kappa$ be any uncountable regular cardinal. Then every complex of absolutely pure modules (In this case absolutely pure modules coincide with strongly FP-injective modules.) is a $\kappa$-directed colimit of complexes of $\kappa$-presentable absolutely pure modules.
\end{Corollary}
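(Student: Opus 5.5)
The plan mirrors the proof of Corollary~\ref{Dualwithflats}, with Theorem~\ref{C(sfpinj)} replacing Theorem~\ref{pacofsfpinj}. First, since $R$ is left coherent, \cite[Theorem 4.2]{LGO} identifies absolutely pure and strongly FP-injective modules. Hence the category in question is exactly $\operatorname{Ch}(\mathsf{SfpInj}(R))$. Left coherence is left $<\aleph_0$-coherence. Because $R$ is countable, we may take $\lambda=\aleph_0$ and $\nu=\aleph_0$. Then $|R|+\aleph_0\leq\nu$, and $\nu^{<\lambda}=\sup_{n<\aleph_0}\aleph_0^n=\aleph_0=\nu$. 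Theorem~\ref{C(sfpinj)} therefore applies. It shows that $\operatorname{Ch}(\mathsf{SfpInj}(R))$ is closed under $\aleph_0$-directed (that is, all directed) colimits in $\operatorname{Ch}(R)$ and is $\aleph_1$-accessible. Its $\aleph_1$-presentable objects are the complexes of $\aleph_1$-presentable strongly FP-injective modules. This already settles the case $\kappa=\aleph_1$.

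For a general uncountable regular $\kappa$, I would pass from $\aleph_1$-accessibility to $\kappa$-accessibility using closure under all directed colimits, following \cite[proof of Proposition 4.1]{Beke}. A category that is $\aleph_1$-accessible and has all directed colimits is $\kappa$-accessible for every uncountable regular $\kappa$. Its $\kappa$-presentable objects are the $\kappa$-small directed colimits of $\aleph_1$-presentable ones, and retracts of these.

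It then remains to identify these $\kappa$-presentable objects as complexes of $\kappa$-presentable absolutely pure modules. A directed colimit of fewer than $\kappa$ complexes of $<\aleph_1$-presented modules is formed componentwise. Its components are therefore $<\kappa$-presented modules, and they are still absolutely pure by the closure property. A retract of such a complex has components that are retracts of $<\kappa$-presented modules, so they are again $<\kappa$-presented. Consequently every complex of absolutely pure modules is a $\kappa$-directed colimit of complexes of $\kappa$-presentable absolutely pure modules. Here I use \cite[Lemma 1.5(c)]{PO} to translate between $\kappa$-presentability of complexes and of their components.

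The main obstacle is the step invoking Beke's argument. One must make sure it needs only $\aleph_1$-accessibility together with closure under directed colimits computed in $\operatorname{Ch}(R)$. In particular, the $\kappa$-directed diagram has to be built inside $\operatorname{Ch}(\mathsf{SfpInj}(R))$, not merely inside $\operatorname{Ch}(R)$. If needed, one can do this directly. Given a complex $X$ of absolutely pure modules, consider the poset of subcomplexes that are $\kappa$-small directed unions of $\aleph_1$-presentable subobjects coming from the $\aleph_1$-directed presentation of $X$. This poset is $\kappa$-directed, and its union is $X$.
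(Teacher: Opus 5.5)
Your proposal is correct and follows the paper's route. The paper's proof simply refers back to Corollary~\ref{Dualwithflats}: take $\lambda=\nu=\aleph_0$, now in Theorem~\ref{C(sfpinj)}, to get $\aleph_1$-accessibility together with closure under all directed colimits, and then invoke \cite[proof of Proposition 4.1]{Beke} to pass to an arbitrary uncountable regular $\kappa$. The only thing to adjust is your optional fallback in the last paragraph: the maps in the $\aleph_1$-directed presentation need not be monomorphisms, so you should index by directed subsets of size $<\kappa$ of the indexing poset and take their colimits, which stay in $\operatorname{Ch}(\mathsf{SfpInj}(R))$ by closure under directed colimits, rather than unions of subcomplexes (the main argument does not need this fallback at all).
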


\begin{proof}
    The reader can follow the reasoning of Corollary \ref{Dualwithflats}.
\end{proof}

We note again that with the above assumptions all absolutely pure modules have injective dimension at most one.

\begin{Theorem}\label{CE(sfpinj)}
Let $R$ be a left $<\lambda$-coherent ring, where $\lambda$ is
a regular infinite cardinal. Let $\nu$ be a cardinal such that
\[
    |R|+\aleph_0\leq \nu
    \qquad\text{and}\qquad
    \nu^{<\lambda}=\nu.
\]
Then the full subcategory of CE-strongly FP-injective complexes
is closed under $\lambda$-directed colimits in $\mathrm{Ch}(R)$
and is $\nu^+$-accessible.

Its $\nu^+$-presentable objects are precisely the CE-strongly
FP-injective complexes whose components are
$<\nu^+$-presented modules. Consequently, every CE-strongly
FP-injective complex is a $\nu^+$-directed colimit of
CE-strongly FP-injective complexes of $<\nu^+$-presented modules.
\end{Theorem}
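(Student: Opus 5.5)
We use the pseudopullback theorem (Proposition \ref{Pseudopullback}), as in Theorems \ref{pacofsfpinj} and \ref{C(sfpinj)}, together with the description in Proposition \ref{prop1}: a complex $X$ is CE-strongly FP-injective if and only if $X_n\in\mathsf{SfpInj}(R)$ and $Z_nX\in\mathsf{SfpInj}(R)$ for all $n$. Put $\kappa=\nu^+$. Let $\mathcal A=\operatorname{Ch}(\mathsf{SfpInj}(R))$, which is $\kappa$-accessible by Theorem \ref{C(sfpinj)}, with $\kappa$-presentable objects the complexes of $<\kappa$-presented strongly FP-injective modules. Let $\mathcal B=\prod_{n\in\mathbb Z}\mathsf{SfpInj}(R)$, which is $\kappa$-accessible by Proposition \ref{Posfpinj} and \cite[Proposition 1.2]{PO}, using $|\mathbb Z|<\kappa$. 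Let $\mathcal C=\prod_{n\in\mathbb Z}R\text{-Mod}$. Define $\Theta_{\mathcal A}(X)=(Z_nX)_n$ and let $\Theta_{\mathcal B}$ be the inclusion. An object of the pseudopullback is a complex of strongly FP-injective modules together with an isomorphism of its cycle family onto a family of strongly FP-injective modules. Since $\mathsf{SfpInj}(R)$ is closed under isomorphisms, this category is equivalent to $\operatorname{CE}(\mathsf{SfpInj}(R))$ by Proposition \ref{prop1}.

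Next we check the hypotheses of Proposition \ref{Pseudopullback}. All three categories have $\kappa$-directed colimits and colimits of $\lambda$-indexed chains. For $\mathcal A$ and $\mathcal B$ this holds because $\mathsf{SfpInj}(R)$ is closed under $\lambda$-directed colimits (Proposition \ref{Posfpinj}), and chains of length $\lambda$ and $\kappa$-directed colimits are special cases of this, since $\lambda\le\nu<\kappa$. The functor $Z_n$ commutes with directed colimits in $R$-Mod, because directed colimits are exact. Hence $\Theta_{\mathcal A}$ preserves these colimits, and $\Theta_{\mathcal B}$ does so trivially.

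The remaining hypothesis, and the main point needing care, is that $\Theta_{\mathcal A}$ sends $\kappa$-presentable objects to $\kappa$-presentable objects. We must show that if $X_n$ is $<\kappa$-presented then so is $Z_nX$. The argument goes as follows. $Z_nX$ is a submodule of a $<\kappa$-generated module, so it has cardinality at most $\nu\cdot|R|=\nu$ and is $<\kappa$-generated. A $\nu$-generated module over a ring with $|R|\le\nu$ has a free presentation whose relation module has cardinality $\le\nu$, hence at most $\nu$ relations. So $Z_nX$ is $<\nu^+$-presented. This uses only $|R|+\aleph_0\le\nu$ and needs no coherence, because $\kappa$ is a successor cardinal above $|R|$. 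The same argument shows that $\kappa$-presentable objects of $\mathcal C$ are the componentwise $<\kappa$-presented families, so the image family is $\kappa$-presentable; $\Theta_{\mathcal B}$ clearly preserves presentability. If this cardinality argument were delicate, the fallback would be to invoke \cite[Lemma 1.5]{PO} for the characterization in $\mathcal C$.

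Proposition \ref{Pseudopullback} then gives that the category is $\kappa$-accessible, with $\kappa$-presentable objects the pairs $(X,(Z_nX)_n)$ in which $X$ is $\kappa$-presentable in $\mathcal A$ and each $Z_nX$ is $<\kappa$-presented. By the previous paragraph the second condition is automatic, so the $\kappa$-presentable objects are exactly the CE-strongly FP-injective complexes with $<\nu^+$-presented components. The colimit description follows from accessibility. Finally, closure under $\lambda$-directed colimits in $\operatorname{Ch}(R)$ follows because $X_n$ and $Z_nX$ commute with directed colimits, $\mathsf{SfpInj}(R)$ is closed under $\lambda$-directed colimits by Proposition \ref{Posfpinj}, and Proposition \ref{prop1} applies.
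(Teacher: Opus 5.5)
Your proposal is correct and follows the paper's proof essentially step for step. It uses the same pseudopullback of the cycle functor $X\mapsto(Z_nX)_{n\in\mathbb Z}$ on $\operatorname{Ch}(\mathsf{SfpInj}(R))$ against the inclusion of $\prod_{n\in\mathbb Z}\mathsf{SfpInj}(R)$ into $\prod_{n\in\mathbb Z}R\text{-Mod}$, the same cardinality argument showing that $Z_nX$ is $<\nu^+$-presented whenever $X_n$ is, and the same componentwise use of Proposition~\ref{prop1} for closure under $\lambda$-directed colimits. One phrasing to tidy: $\kappa$-presentability of the family $(Z_nX)_n$ in the product category follows from the componentwise characterization for products indexed by a set of size $<\kappa$, as the paper also uses, not from ``the same'' cardinality argument.
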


\begin{proof}
Put $\kappa=\nu^+$. The cardinal assumptions imply
$\lambda\leq\nu<\kappa$, and $\kappa$ is uncountable.
Consider the categories
\[
    \mathcal{A}
      =\mathrm{Ch}\bigl(\mathsf{SfpInj}(R)\bigr),
    \qquad
    \mathcal{D}
      =\prod_{n\in\mathbb{Z}}\mathsf{SfpInj}(R),
    \qquad
    \mathcal{C}
      =\prod_{n\in\mathbb{Z}}R\text{-}\mathrm{Mod},
\]
and the functors
\[
    K:\mathcal{A}\longrightarrow\mathcal{C},
    \qquad
    K(X)=\bigl(Z_nX\bigr)_{n\in\mathbb{Z}}
        =\bigl(\ker d_n^X\bigr)_{n\in\mathbb{Z}},
\]
and
\[
    V:\mathcal{D}\longrightarrow\mathcal{C},
\]
where $V$ is the inclusion.

The pseudopullback of $K$ and $V$ is equivalent to the full
subcategory of complexes $X$ satisfying
\[
    X_n,\ Z_nX\in\mathsf{SfpInj}(R)
    \qquad\text{for every }n\in\mathbb{Z}.
\]
Indeed, an object of the pseudopullback consists of a complex
$X\in\mathcal{A}$, a family $(M_n)_{n\in\mathbb{Z}}$ of strongly
FP-injective modules, and an isomorphism
\[
    (Z_nX)_{n\in\mathbb{Z}}
       \xrightarrow{\ \sim\ }(M_n)_{n\in\mathbb{Z}}.
\]
Since $V$ is fully faithful, forgetting this additional data
gives the asserted equivalence. By Proposition \ref{prop1}, this is
precisely the category of CE-strongly FP-injective complexes.

By Theorem \ref{C(sfpinj)}, the category $\mathcal{A}$ is
$\kappa$-accessible. Proposition \ref{Posfpinj} and the accessibility
theorem for products show that $\mathcal{D}$ and
$\mathcal{C}$ are also $\kappa$-accessible. Here we use
$|\mathbb{Z}|<\kappa$.

All three categories admit $\kappa$-directed colimits and
colimits of $\lambda$-indexed chains, computed componentwise
in $R\text{-}\mathrm{Mod}$. The functor $V$ preserves these
colimits. The functor $K$ also preserves them, since filtered
colimits of modules commute with kernels.

We next verify that both functors preserve
$\kappa$-presentable objects. For $V$, this follows from
Proposition \ref{Posfpinj} and the componentwise characterization of
$\kappa$-presentable objects in the product categories.

Let $X$ be a $\kappa$-presentable object of $\mathcal{A}$.
By Theorem \ref{C(sfpinj)}, each $X_n$ is a $<\kappa$-presented module.
Since $|R|\leq\nu$ and $\kappa=\nu^+$, this implies
$|X_n|\leq\nu$, and hence $|Z_nX|\leq\nu$ for every $n$.
Every $R$-module of cardinality at most $\nu$ is
$<\kappa$-presented: it has a presentation with at most $\nu$
generators, and the kernel of the corresponding free
presentation has cardinality at most $\nu$, so it has at most
$\nu$ generators as well. Thus each $Z_nX$ is
$\kappa$-presentable in $R\text{-}\mathrm{Mod}$, and $K(X)$
is $\kappa$-presentable in $\mathcal{C}$.

The pseudopullback theorem, Proposition \ref{Pseudopullback}, now shows that
the category of CE-strongly FP-injective complexes is
$\kappa$-accessible. Its $\kappa$-presentable objects are
precisely those for which $X$ is $\kappa$-presentable in
$\mathcal{A}$ and $(Z_nX)_{n\in\mathbb{Z}}$ is
$\kappa$-presentable in $\mathcal{D}$.
For a CE-strongly FP-injective complex, the preceding
cardinality argument and Proposition \ref{Posfpinj} show that the
second condition follows from the first. Theorem \ref{C(sfpinj)}
therefore gives the asserted description of presentable
objects.

Finally, let $(X_i)_{i\in I}$ be a $\lambda$-directed diagram
of CE-strongly FP-injective complexes. Filtered colimits
commute with kernels, so
\[
    Z_n\left(\varinjlim_{i\in I}X_i\right)
       \cong \varinjlim_{i\in I}Z_nX_i.
\]
By Proposition \ref{Posfpinj}, both the components and the cycles of
$\varinjlim_{i\in I}X_i$ are strongly FP-injective.
Proposition \ref{prop1} therefore implies that this colimit is
CE-strongly FP-injective. The final assertion follows
from $\kappa$-accessibility.
\end{proof}

\begin{Remark} Contrary to the cases of Theorem \ref{pacofsfpinj} and Theorem \ref{C(sfpinj)} there doesn't seem to exist in the literature an analogous result to that of Theorem \ref{CE(sfpinj)} dealing with $CE(Flat)$ complexes. So we state and prove the following.
    
\end{Remark}

\begin{Proposition}
    
 Let $R$ be a ring and $\kappa$ be an infinite regular cardinal. Then any $CE(Flat)$ complex is a $\kappa$-directed colimit of $CE(Pres_{\kappa}(Flat))$ complexes. Here $Pres_{\kappa}(Flat)$ denotes the category of $\kappa$-presentable flat modules, hence $CE(Pres_{\kappa}(Flat))$ contains all the complexes $F$ which are such that $F$, $Z(F)$, $B(F)$ and $H(F)$ are complexes of $\kappa$-presentable flat modules.
    
\end{Proposition}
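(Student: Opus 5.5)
The plan is to reduce the statement to a Lazard–Govorov type description of $\operatorname{CE}(Flat)$, and then to pass from directed colimits to $\kappa$-directed colimits by the standard "small directed subsets" argument.

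\textbf{Step 1 (Lazard step).} I will show that every $\operatorname{CE}(Flat)$ complex $F$ is a directed colimit of finitely generated CE-projective complexes, i.e.\ finite direct sums of complexes $S^n(R)$ and $D^n(R)$. Enochs proves this in \cite{En}, and I would cite it there. If a self-contained argument is wanted, I would use the criterion for locally finitely presentable additive categories. The finitely generated CE-projective complexes are finitely presentable objects of $\operatorname{Ch}(R)$, being bounded complexes of finitely presented modules. It then suffices to check that every chain map $G\to F$ from a CE finitely presented complex factors through a finitely generated CE-projective complex. For this I would use the adjunctions
\[
\operatorname{Hom}(S^n(M),F)\cong\operatorname{Hom}_R(M,Z_nF),\qquad \operatorname{Hom}(D^n(M),F)\cong\operatorname{Hom}_R(M,F_{n+1}),
\]
together with the module-level Lazard factorization through finitely generated free modules for the flat modules $F_n$ and $Z_nF$. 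This factorization is the step I expect to be the main obstacle. The difficulty is that a map out of a general CE finitely presented $G$ must be handled simultaneously on components and cycles. I would first reduce to $G$ a CE-projective presentation cokernel, and then use $\overline{\operatorname{Ext}}^1(-,F)$-vanishing against CE finitely presented complexes: CE-flatness of $F$ gives exactness of the CE-pure structure.

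\textbf{Step 2 (closure and size).} Next I record two facts.
\begin{enumerate}[label=(\alph*)]
\item $\operatorname{CE}(Flat)$ is closed under directed colimits in $\operatorname{Ch}(R)$. Indeed, directed colimits are exact and commute with $Z$, $B$ and $H$, and $Flat$ is closed under directed colimits.
\item If a directed diagram of finitely generated CE-projective complexes has fewer than $\kappa$ objects, then for each $n$ the modules $X_n$, $Z_nX$, $B_nX$ and $H_nX$ of its colimit $X$ are $<\kappa$-presented flat modules. Each is a colimit of fewer than $\kappa$ finitely presented modules, and $<\kappa$-presented modules are closed under $<\kappa$-small colimits. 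For $\kappa=\aleph_0$, a finite directed poset has a maximum, so the colimit is itself a finitely generated CE-projective complex and this case is trivial.
\end{enumerate}

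\textbf{Step 3 ($\kappa$-directed refinement).} Write $F=\varinjlim_{i\in I}P_i$ as in Step 1. Let $\mathcal J$ be the set of directed subposets $J\subseteq I$ with $|J|<\kappa$, ordered by inclusion. Every subset of $I$ of size $<\kappa$ lies in such a $J$, obtained by closing under chosen upper bounds countably many times; this uses that $\kappa$ is regular and, when $\kappa$ is uncountable, that $\kappa$ exceeds $\aleph_0$. Hence $\mathcal J$ is $\kappa$-directed and $\bigcup\mathcal J=I$. Setting $F_J=\varinjlim_{i\in J}P_i$, we get $F\cong\varinjlim_{J\in\mathcal J}F_J$ as a $\kappa$-directed colimit, by the usual cofinality/iterated colimit argument of \cite[proof of Proposition 4.1]{Beke}. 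By Step 2, each $F_J$ belongs to $\operatorname{CE}(Pres_\kappa(Flat))$, which proves the proposition.
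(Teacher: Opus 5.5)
Your proposal is correct and essentially the paper's route: the paper takes the case $\kappa=\aleph_0$ from \cite[Theorem 7.2]{En} and gets the general case from $\aleph_0\triangleleft\kappa$ via \cite[Theorem 2.11 and Example 2.13(1)]{AR}, which is what your Steps 2 and 3 carry out by hand (and your Step 2(b) makes explicit that the resulting $\kappa$-small colimits lie in $\operatorname{CE}(Pres_\kappa(Flat))$). Rely on the citation for Step 1, however, because your optional self-contained sketch fails as written: $\overline{\operatorname{Ext}}^1(G,F)$ need not vanish for CE-flat $F$ and CE finitely presented $G$ (by Lemma~\ref{EnExtiso}, $\overline{\operatorname{Ext}}^1(S^0(\mathbb Z/2),S^0(\mathbb Z))\cong\operatorname{Ext}^1_{\mathbb Z}(\mathbb Z/2,\mathbb Z)\neq0$), and the factorization criterion must be tested against all bounded complexes of finitely presented modules, not only the CE finitely presented ones.
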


\begin{proof} The most challenging case for $\kappa=\aleph_0$ has been settled by Enochs in \cite[Theorem 7.2]{En}. We can deduce the rest by \cite[Theorem 2.11]{AR} and \cite[Example 2.13(1)]{AR}.
    
\end{proof}

\section{Strongly FP-injective modules and $<\lambda$-coherent rings}

In this section we briefly study $<\lambda$-coherent rings and their connection to strongly FP-injective modules. We show that a ring is $<\lambda$-coherent if and only if strongly FP-injective modules are closed under $\lambda$-directed colimits. We also show that the supremum of injective dimensions of strongly FP-injective modules is bounded by $m$ for a $<\aleph_m$-Noetherian ring.

\begin{Definition} Let $\lambda$ be an infinite cardinal. We call a ring left $<\lambda$-Noetherian if every submodule of a $<\lambda$-generated left module is $<\lambda$-generated.

\end{Definition}

\begin{Remark} For $\lambda=\aleph_0$ the above definition retrieves the definitions of Noetherian rings.
    
\end{Remark}

\begin{Lemma}\label{l-coherent}
Let $\lambda$ be an infinite regular cardinal. The following conditions are equivalent:

\noindent(i) $R$ is left $<\lambda$-coherent.

\noindent(ii) Every $<\lambda$-generated submodule $K$ of a free module is $\lambda$-presentable.

\noindent(iii) Every finitely generated left ideal of $R$ is $\lambda$-presentable.

\end{Lemma}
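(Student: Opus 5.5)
I would prove the cycle (i)$\Rightarrow$(ii)$\Rightarrow$(iii)$\Rightarrow$(i). Throughout I use the standard fact that, for a regular infinite cardinal $\lambda$, a module is $\lambda$-presentable exactly when it is $<\lambda$-presented: it admits a presentation with fewer than $\lambda$ generators and fewer than $\lambda$ relations. I also use that an extension of two $\lambda$-presentable modules is $\lambda$-presentable.

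For (i)$\Rightarrow$(ii), let $K$ be a $<\lambda$-generated submodule of a free module $F$. The generators of $K$ involve fewer than $\lambda$ basis elements of $F$, by regularity of $\lambda$. Hence $K$ lies in a free direct summand $F'$ of $F$ of rank $<\lambda$. Since $F'$ is $\lambda$-presentable and $K\subseteq F'$ has fewer than $\lambda$ generators, definition (i) gives that $K$ is $\lambda$-presentable. The implication (ii)$\Rightarrow$(iii) is immediate: a finitely generated left ideal is a $<\lambda$-generated submodule of the free module $R$.

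The main work is (iii)$\Rightarrow$(i). Let $M$ be $\lambda$-presentable and $K\subseteq M$ generated by fewer than $\lambda$ elements. Write $M=F/L$ with $F$ free of rank $<\lambda$ and $L$ $<\lambda$-generated. Let $K'\subseteq F$ be the preimage of $K$. Then $K'$ is generated by $L$ together with lifts of the generators of $K$, so $K'$ is $<\lambda$-generated. We have $K\cong K'/L$, and since $L$ is $<\lambda$-generated, $K$ is $\lambda$-presentable as soon as $K'$ is. So it suffices to show that every $<\lambda$-generated submodule of $R^{(\alpha)}$, with $\alpha<\lambda$, is $\lambda$-presentable.

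First I treat finitely generated submodules of $R^n$, by induction on $n$ in the usual coherence style. For $K\subseteq R^{n}$, project onto the last coordinate. This gives $0\to K\cap R^{n-1}\to K\to I\to 0$ with $I$ a finitely generated left ideal, which is $\lambda$-presentable by (iii). Because $K$ is finitely generated and $I$ is $\lambda$-presentable, the kernel $K\cap R^{n-1}$ is $<\lambda$-generated; this is the standard fact that the kernel of an epimorphism from a $<\lambda$-generated module onto a $\lambda$-presentable module is $<\lambda$-generated.

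Here lies the obstacle: $K\cap R^{n-1}$ need not be finitely generated, so the induction must run over $<\lambda$-generated submodules, not finitely generated ones. I would therefore strengthen the inductive claim to: every $<\lambda$-generated submodule of $R^n$ is $\lambda$-presentable. For the base case, I need that a $<\lambda$-generated left ideal $J$ is $\lambda$-presentable. I would write $J$ as a $<\lambda$-directed union of finitely generated ideals $J_s$, each $\lambda$-presentable by (iii), and present $J$ as the colimit of this system. The colimit of fewer than $\lambda$ many $\lambda$-presentable modules is $\lambda$-presentable, and $J$ is this colimit because the union is a directed colimit of monomorphisms. The same projection argument then runs for $<\lambda$-generated submodules of $R^n$. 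For $R^{(\alpha)}$ with $\alpha<\lambda$ infinite, I would again write the submodule as a directed union, over finite subsets $S\subseteq\alpha$, of its intersections with $R^{S}$. There are fewer than $\lambda$ such pieces, and each is $<\lambda$-generated, so the same small-colimit argument concludes. The delicate point to check throughout is that each intersection remains $<\lambda$-generated, and this follows from the kernel fact above.
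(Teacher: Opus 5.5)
Your reduction of (i) to the free case, via the preimage $K'$ and $K\cong K'/L$, is correct. So is your finite-rank induction, strengthened to $<\lambda$-generated submodules of $R^n$, with base case obtained from (iii) by a $\lambda$-small directed union; it supplies the detail the paper passes over with ``it is easy to see''.

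\textbf{The gap is in the last step, for $R^{(\alpha)}$ with $\alpha$ infinite.} You claim each $K\cap R^{S}$ ($S$ finite) is $<\lambda$-generated ``by the kernel fact''. But $K\cap R^{S}$ is the kernel of the projection $K\to R^{(\alpha\setminus S)}$. To apply the kernel fact, you need the image to be $\lambda$-presentable. That image is a $<\lambda$-generated submodule of a free module of rank $|\alpha\setminus S|=|\alpha|$, so this is exactly the statement you are proving, in the same rank. The projection whose image you do control is $K\to R^{S}$, and its kernel is $K\cap R^{(\alpha\setminus S)}$, which is the wrong piece. As written, the argument is circular.

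The repair uses the device from your own base case, and it is what the paper does. Fix a generating set $G$ of $K$ with $|G|<\lambda$. Write $K$ as the directed union of the submodules $K_s$ generated by finite subsets $s\subseteq G$. Each $K_s$ is finitely generated and lies in a finite-rank free summand $R^{S}$, with $S$ the union of the supports of the elements of $s$. Hence $K_s$ is $\lambda$-presentable by your finite-rank case, with no intersections involved. For uncountable $\lambda$ there are fewer than $\lambda$ such $s$; for $\lambda=\aleph_0$ the infinite-rank case never arises. So \cite[Proposition 1.16]{AR} shows $K$ is $\lambda$-presentable.

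With this change your proof is complete. It then differs from the paper in one respect: the paper proves (iii)$\Rightarrow$(ii)$\Rightarrow$(i) using a $3\times3$ horseshoe diagram for the last implication. You instead go directly to (i) with the shorter quotient argument $K\cong K'/L$, which needs only one application of the free case.
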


\begin{proof} The implications (i)$\Rightarrow$(ii) and (ii)$\Rightarrow$(iii) are obvious.

In order to prove the implication (iii)$\Rightarrow$(ii), suppose that every finitely generated left ideal is $\lambda$-presentable. Then it is easy to see that every finitely generated submodule of a finitely generated free module is $\lambda$-presentable, consequently every finitely generated submodule of a free module is $\lambda$-presentable. For the case $\lambda=\aleph_0$ (ii) is already proven. Suppose that $\lambda$ is uncountable. Choose a generating set $G$ of $K$ with $|G|<\lambda$. Writing $K$ as a directed union of all its submodules with a finite generating set from $G$ we can deduce by \cite[Proposition 1.16]{AR} that $K$ is $\lambda$-presentable.

For the implication (ii)$\Rightarrow$(i), suppose that every $<\lambda$-generated submodule  of a free module is $\lambda$-presentable. In addition, let $\Lambda$ be a $\lambda$-presentable module and $M$ be a $<\lambda$-generated submodule of $\Lambda$. We wish to prove that $M$ is $\lambda$-presentable.

Consider the commutative diagram:
\[
\begin{tikzcd}[row sep=large, column sep=large]
  & 0 \arrow[d]
  & 0 \arrow[d]
  & 0 \arrow[d]
  & \\
0 \arrow[r]
  & K_1 \arrow[r] \arrow[d]
  & K \arrow[r] \arrow[d]
  & K_0 \arrow[r] \arrow[d]
  & 0
  & \\
0 \arrow[r]
  & F_1 \arrow[r] \arrow[d]
  & F_1 \oplus F_0 \arrow[r] \arrow[d]
  & F_0 \arrow[r] \arrow[d]
  & 0 \\
0 \arrow[r]
  & M \arrow[r] \arrow[d]
  & \Lambda \arrow[r] \arrow[d]
  & C \arrow[r] \arrow[d]
  & 0 \\
  & 0
  & 0
  & 0
  &
\end{tikzcd}
\]
where $F_1$ and $F_0$ are $\lambda$-presentable free modules. Then, of course, $F_1\oplus F_0$ is $\lambda$-presentable. Since $M$ is $<\lambda$-generated and $\Lambda$ is $\lambda$-presentable, $C$ must be $\lambda$-presentable. So, $K$ and $K_0$ are $<\lambda$-generated. By (ii) $K$ and $K_0$ are $\lambda$-presentable meaning that $K_1$ is $<\lambda$-generated, hence $M$ is $\lambda$-presentable.
\end{proof}

\begin{Theorem}\label{sfpinj-directed-characterization} Let $\lambda$ be an infinite regular cardinal. The following are equivalent for a ring $R$.

(i) $R$ is a left $<\lambda$-coherent ring.

(ii) Every $\lambda$-directed colimit of strongly FP-injective modules is a strongly FP-injective module.

(iii) Every $\lambda$-directed colimit of injective modules is an absolutely pure module.

\end{Theorem}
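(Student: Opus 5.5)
I will prove the cycle (i)$\Rightarrow$(ii)$\Rightarrow$(iii)$\Rightarrow$(i).

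\emph{(i)$\Rightarrow$(ii).} This is the closure statement already contained in Proposition~\ref{Posfpinj}, or rather in its proof. Over a left $<\lambda$-coherent ring, every finitely presented module $F$ has a projective resolution by $<\lambda$-generated projectives, because each syzygy is a $<\lambda$-generated submodule of a free module. Each syzygy $\Omega^jF$ is therefore $\lambda$-presentable by Lemma~\ref{l-coherent}(ii). By Proposition~\ref{module-pair}, a module $M$ is strongly FP-injective iff $\operatorname{Ext}^1_R(\Omega^jF,M)=0$ for all $F$ and $j$. For a $\lambda$-presentable module $T$, the functor $\operatorname{Ext}^1_R(T,-)$ commutes with $\lambda$-directed colimits. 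To see this, take a presentation $0\to K\to P\to T\to 0$ with $P$ free and both $P$ and $K$ $\lambda$-presentable (possible since $T$ is $\lambda$-presentable). Then $\operatorname{Ext}^1(T,M)=\operatorname{coker}(\operatorname{Hom}(P,M)\to\operatorname{Hom}(K,M))$, and both Hom functors preserve $\lambda$-directed colimits. Hence a $\lambda$-directed colimit of modules in $\mathcal T^{\perp_1}$ stays in $\mathcal T^{\perp_1}$.

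\emph{(ii)$\Rightarrow$(iii).} Injective modules are strongly FP-injective, so by (ii) their $\lambda$-directed colimits are strongly FP-injective, hence absolutely pure.

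\emph{(iii)$\Rightarrow$(i).} This is the main step, and I will aim for condition (iii) of Lemma~\ref{l-coherent}. Let $I\le R$ be a finitely generated left ideal. Write $0\to K\to R^n\to I\to 0$, so that $I$ is $\lambda$-presentable iff $K$ is $<\lambda$-generated. I will mimic the classical argument for $\lambda=\aleph_0$ in \cite[Theorem 4.2]{LGO}. Write $K$ as the $\lambda$-directed union of its $<\lambda$-generated submodules $K_\alpha$. Embed each $N_\alpha=R^n/K_\alpha$ into an injective $E_\alpha$, and form a $\lambda$-directed system of injectives whose colimit receives $\varinjlim N_\alpha=R^n/K\cong I$. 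To make the transition maps compatible, I will use the standard trick: take $E_\alpha=\prod_{\beta\ge\alpha}E(N_\beta)$ with the projections as transition maps, or alternatively $\bigoplus$-style constructions. By (iii), $E=\varinjlim E_\alpha$ is absolutely pure.

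The finitely presented module $R/I$ gives $\operatorname{Ext}^1(R/I,E)=0$, so every map $I\to E$ extends to $R\to E$. The aim is to exhibit a map $I\to E$ which does not extend unless some $K_\alpha$ already equals $K$. The natural candidate is the canonical map $I=R^n/K\to E$. Extendability to $R$ gives an element $e\in E$ with $x\cdot e$ equal to the image of $x$ for all $x\in I$. The element $e$ lives in some $E_\alpha$, and the equalities for the finitely many generators of $I$ hold at some stage $\beta\ge\alpha$. From this I expect to deduce that $R^n/K_\beta\to E_\beta$ factors through $I$, forcing $K\subseteq K_\beta$.

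The hard part is arranging the colimit of injectives so that this factorization genuinely detects the kernel. If the direct argument resists, I will instead test absolute purity of $\varinjlim E(R^n/K_\alpha)$ against the finitely presented module $R^n/\langle\text{relations}\rangle$ and use that a pure embedding of the colimit into its injective envelope reflects solvability of finite systems of equations, which translates into $K=K_\beta$ for some $\beta$.
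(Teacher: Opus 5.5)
Your proposal is correct. For (i)$\Rightarrow$(ii) and (ii)$\Rightarrow$(iii) it agrees with the paper. The paper simply cites Proposition~\ref{Posfpinj} for the first implication, and you spell out the syzygy argument behind that citation. One justification there needs fixing. A presentation $0\to K\to P\to T\to 0$ with $K$ $\lambda$-presentable does not exist merely because $T$ is $\lambda$-presentable. It exists here because $K=\Omega^{j+1}F$, and your induction using (i) has already shown that module to be $\lambda$-presentable.

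For (iii)$\Rightarrow$(i) your route differs from the paper's. The paper's argument runs as follows.
\begin{itemize}
\item It fixes an \emph{arbitrary} $\lambda$-directed system $(M_i)$ and embeds it functorially into injectives $M_i\to E(M_i)$, using \v{S}\v{t}ov\'{\i}\v{c}ek's results on efficient exact categories for the functoriality.
\item It extends $L\to\varinjlim M_i\to\varinjlim E(M_i)$ over $R$ by absolute purity.
\item It pushes the extension back into some $M_j$, which checks directly that $\operatorname{Hom}_R(L,-)$ preserves $\lambda$-directed colimits.
\end{itemize}
You instead use one tailored system built from the $<\lambda$-generated submodules $K_\alpha$ of the relation module $K$. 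You obtain compatible monomorphisms $\iota_\alpha\colon R^n/K_\alpha\to E_\alpha$ from the product trick rather than from functorial envelopes, and then show that $K$ stabilizes.

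The step you call the hard part is already settled by your own construction. The $\alpha$-th coordinate of $\iota_\alpha$ is the envelope embedding, so each $\iota_\alpha$ is injective. Let $a_1,\dots,a_n$ generate $I$ and let $\bar e_k$ be the images of the basis vectors of $R^n$. Once $a_ke_\beta=\iota_\beta(\bar e_k)$ holds for all $k$, the map $\iota_\beta$ equals the composite $R^n/K_\beta\twoheadrightarrow R^n/K\cong I\to E_\beta$, $x\mapsto xe_\beta$. Hence that surjection is injective, and $K=K_\beta$ is $<\lambda$-generated, so your fallback plan is unnecessary.

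Your version is more elementary and self-contained. It needs only products of injective envelopes and the fact that $I$ is $\lambda$-presentable iff its relation module is $<\lambda$-generated. The paper's version stays closer to Stenstr\"om's original and proves $\lambda$-presentability straight from the Hom-commutation definition, uniformly in the test system.
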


\begin{proof} (i)$\Rightarrow$(ii) See Proposition \ref{Posfpinj}.

(ii)$\Rightarrow$(iii) Obvious.

(iii)$\Rightarrow$(i) We follow the proof of \cite[Theorem 3.2]{Stenström}. Let $L$ be a finitely generated left ideal. We will show that $L$ is $\lambda$-presentable which, by Lemma \ref{l-coherent}, means exactly that $R$ is $<\lambda$-coherent. 

Consider any $\lambda$-directed system $M_i$ and a map $\alpha:L\rightarrow \underrightarrow{lim} M_i$. By \cite[Proposition 3.13 and Corollary 5.9]{StovicekExact}, for each $i$ there exists a short exact sequence
\[
0\longrightarrow M_i\longrightarrow E(M_i)\longrightarrow C(M_i)
\longrightarrow0
\]
functorial in $M_i$, with $E(M_i)$ injective. These produce a compatible diagram of short exact sequences. Taking colimits preserves exactness, and assumption (iii) implies
that $\varinjlim_i E(M_i)$ is absolutely pure. Since the cokernel of the inclusion $0\rightarrow L\rightarrow R$ is a finitely presented module, $\alpha$ extends to a map $\beta:R\rightarrow\underrightarrow{lim} E(M_i)$. It is obvious that $\beta$ factors through some $E(M_j)$. The composite $L\rightarrow R\rightarrow E(M_j)\rightarrow E(M_j)/M_j $ goes to zero in the directed colimit so we can assume it is zero itself by taking $j$ large enough. $L\rightarrow R\rightarrow E(M_j)$ factors through $M_j$ which gives the desired factorization of $\alpha$. So, $L$ is $\lambda$-presentable.

\end{proof}

 The preceding characterization can also be expressed
in terms of strongly FP-injective complexes.
\begin{Corollary}\label{complex-directed-characterization}
Let $\lambda$ be an infinite regular cardinal.
The following conditions are equivalent:
\begin{enumerate}[label=(\roman*)]
\item $R$ is left $<\lambda$-coherent.
\item $\widetilde{\mathsf{SfpInj}}(R)$ is closed under
$\lambda$-directed colimits.
\item $\operatorname{Ch}(\mathsf{SfpInj}(R))$ is closed
under $\lambda$-directed colimits.
\item $\operatorname{CE}(\mathsf{SfpInj}(R))$ is closed
under $\lambda$-directed colimits.
\end{enumerate}
\end{Corollary}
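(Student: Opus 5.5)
The plan is to reduce everything to Theorem~\ref{sfpinj-directed-characterization}, using disks and stalks to pass between modules and complexes. For (i)$\Rightarrow$(ii),(iii),(iv), assume $R$ is left $<\lambda$-coherent. By Theorem~\ref{sfpinj-directed-characterization}, $\mathsf{SfpInj}(R)$ is closed under $\lambda$-directed colimits in $R$-Mod. Colimits in $\operatorname{Ch}(R)$ are computed degreewise, and directed colimits of modules are exact, so they commute with $Z_n$, $B_n$ and $H_n$.

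With this, (iii) is immediate. For (ii), use Corollary~\ref{sfi-cycles}: a $\lambda$-directed colimit of acyclic complexes is acyclic, and its cycles are the $\lambda$-directed colimits of the cycles, hence strongly FP-injective. For (iv), use Proposition~\ref{prop1}: both the components and the cycles of the colimit are $\lambda$-directed colimits of strongly FP-injective modules. Alternatively, one can simply cite Theorems~\ref{pacofsfpinj}, \ref{C(sfpinj)} and \ref{CE(sfpinj)}, whose closure statements use only the coherence hypothesis and Proposition~\ref{Posfpinj}.

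For the converses I would show that each of (ii), (iii), (iv) implies condition (ii) of Theorem~\ref{sfpinj-directed-characterization}. Let $(M_i)_{i\in I}$ be a $\lambda$-directed system in $\mathsf{SfpInj}(R)$ with colimit $M$.

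\emph{Case (ii).} Apply (ii) to the system $D^0(M_i)$. These are acyclic complexes whose cycles are $0$ and $M_i$, so by Corollary~\ref{sfi-cycles} they lie in $\widetilde{\mathsf{SfpInj}}(R)$. The colimit is $D^0(M)$, and Corollary~\ref{sfi-cycles} then gives $M\in\mathsf{SfpInj}(R)$.

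\emph{Case (iii).} Apply (iii) to the stalks $S^0(M_i)$; their colimit $S^0(M)$ has component $M$, so $M$ is strongly FP-injective.

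\emph{Case (iv).} Again use the stalks $S^0(M_i)$. By Proposition~\ref{prop1} these are CE-strongly FP-injective, since their components and cycles are $M_i$. The colimit $S^0(M)$ is then CE-strongly FP-injective, which gives $M\in\mathsf{SfpInj}(R)$.

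Theorem~\ref{sfpinj-directed-characterization} then yields (i) in each case.

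There is no real obstacle here. The only points needing care are that the forgetful functors to modules in each degree preserve and reflect these colimits, and that disks and stalks commute with directed colimits. Both hold because colimits in $\operatorname{Ch}(R)$ are computed degreewise.
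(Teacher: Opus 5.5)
Your proposal is correct and follows essentially the same route as the paper: for the forward direction you use Theorem~\ref{sfpinj-directed-characterization} together with exactness of directed colimits, and for the converse you test each condition on a $\lambda$-directed system of simple complexes built from the $M_i$. The only difference is that the paper uses the disks $D^0(M_i)$ for all three conditions, since they lie in all three classes at once, whereas you use disks for (ii) and stalks $S^0(M_i)$ for (iii) and (iv), which works equally well.
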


\begin{proof}
Assume (i). By
Theorem~\ref{sfpinj-directed-characterization},
strongly FP-injective modules are closed under
$\lambda$-directed colimits.
Filtered colimits of modules are exact and commute
with cycles, boundaries, and homology.
The descriptions of the three classes therefore give
(ii), (iii), and (iv).

Conversely, let $(M_i)$ be a $\lambda$-directed system
of strongly FP-injective modules. Each disk
$D^0(M_i)$ belongs to all three classes, and
\[
    \varinjlim_i D^0(M_i)
       \cong D^0\left(\varinjlim_i M_i\right).
\]
Any one of (ii), (iii), or (iv) implies that
$\varinjlim_i M_i$ is strongly FP-injective.
Theorem~\ref{sfpinj-directed-characterization}
then yields (i).
\end{proof}

The following Lemma is a special case of 
\cite[Proposition 2.1.]{PO}

\begin{Lemma}\label{alephmpresentable} Every $\aleph_m$-presentable module has weakly FP-projective dimension at most $m$.
    
\end{Lemma}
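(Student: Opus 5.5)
The plan is to argue by induction on $m$ and to use the standard filtration of $\aleph_m$-presentable modules by smaller presentable pieces. Throughout we use that $\mathsf{WfpProj}(R)$ is closed under transfinite extensions and direct summands; this follows from Proposition~\ref{module-pair}, since it is the left class of a cotorsion pair. We also use that its weakly FP-projective dimension can be computed by syzygies, because the pair is hereditary. Thus $\operatorname{wfpd}M\le m$ if and only if $\operatorname{Ext}^{m+1}_R(M,S)=0$ for all $S\in\mathsf{SfpInj}(R)$. Equivalently, it holds if and only if every $m$th syzygy of $M$ lies in $\mathsf{WfpProj}(R)$.

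For the base case $m=0$, an $\aleph_0$-presentable module is finitely presented. It therefore lies in $\mathcal T\subseteq\mathsf{WfpProj}(R)$.

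For the inductive step, let $M$ be $\aleph_m$-presentable with $m\ge1$. Write $M=\varinjlim_{i\in I}M_i$ as a directed colimit of modules that are $<\aleph_{m-1}$-presented, hence $\aleph_{m-1}$-presentable. The index poset $I$ can be chosen of cardinality $<\aleph_m$, that is, at most $\aleph_{m-1}$. This is the standard fact that an $\aleph_m$-presentable object of a locally finitely presentable category is a retract of a colimit of a diagram of finitely presentable objects indexed by a poset of size $<\aleph_m$ \cite[Remark 2.15]{AR}. Retracts cause no harm because $\operatorname{Ext}$ is additive. We then apply a derived-limit argument.

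\begin{itemize}[label=$\cdot$]
\item For a directed poset of cardinality $\aleph_{k}$, the functors $\varprojlim^{j}$ on $I^{\mathrm{op}}$-systems of abelian groups vanish for $j>k+1$. This is the Goblot--Mitchell bound.
\item There is a spectral sequence
\[
E_2^{p,q}=\textstyle\varprojlim^{p}_{i}\operatorname{Ext}^q_R(M_i,S)\Longrightarrow \operatorname{Ext}^{p+q}_R(\varinjlim M_i,S).
\]
It exists because the directed colimit is computed by the bar-type resolution, and this resolution is exact since directed colimits are exact.
\end{itemize}

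Taking the $M_i$ finitely presented, we have $\operatorname{Ext}^q_R(M_i,S)=0$ for $q\ge1$ and every $S\in\mathsf{SfpInj}(R)$. The spectral sequence therefore collapses to
\[
\operatorname{Ext}^n_R(M,S)\cong\varprojlim^n_i\operatorname{Hom}_R(M_i,S).
\]
With $|I|\le\aleph_{m-1}$, this vanishes for $n>m$, giving $\operatorname{wfpd}M\le m$ directly. This makes the induction unnecessary: the argument is exactly the Ext-analogue of the classical fact that $\aleph_m$-presented flat modules have projective dimension at most $m+1$, and here it is sharpened by one because the building blocks are finitely presented.

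The main obstacle is the cardinality bookkeeping. We must check that an $\aleph_m$-presentable module is (a summand of) a colimit over a directed poset of cardinality at most $\aleph_{m-1}$, rather than $\aleph_m$, and we must use the sharp Mitchell bound $\varprojlim^{j}=0$ for $j\ge k+2$ when $|I|=\aleph_k$.

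For $m=1$, the countably presented case, we can avoid the spectral sequence and argue directly. Choose a chain $F_0\to F_1\to\cdots$ of finitely presented modules with colimit $M$. The telescope sequence
\[
0\to\bigoplus F_i\to\bigoplus F_i\to M\to0
\]
shows that $M$ has dimension at most $1$, since both left-hand terms lie in $\mathsf{WfpProj}(R)$.

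For general $m$, we would first try to reduce to this chain picture by writing $M$ as the union of a continuous chain of length $\omega_{m-1}$ of $\aleph_{m-1}$-presentable modules $M_\alpha$, with presentable successive quotients $M_{\alpha+1}/M_\alpha$. If the Mitchell bound proves awkward, we would instead argue by induction as follows, in the manner of the proof of \cite[Proposition 2.1]{PO}, of which the statement is a special case. By the inductive hypothesis each $M_{\alpha+1}/M_\alpha$ has dimension at most $m-1$. Eklof's lemma, applied to $(m-1)$st syzygies along the filtration, then yields dimension at most $m$ for $M$.
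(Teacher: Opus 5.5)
Your main argument is correct, but it takes a genuinely different route from the paper. The paper does not prove the Ext-vanishing itself. It applies \cite[Proposition 2.1]{PO} with $S$ a set of representatives of the finitely presented modules to obtain $\operatorname{Ext}^i_R(C,D)=0$ for all $i>m$ and $D\in\mathsf{SfpInj}(R)$, and then reads off that the $m$th syzygy is weakly FP-projective. You prove this vanishing directly, in three steps. First, you write $M$ as a directed colimit of finitely presented modules over a poset of cardinality at most $\aleph_{m-1}$. This holds up to a retract via \cite[Remark 2.15]{AR}, or with no retract at all if you take finite pieces of a presentation with at most $\aleph_{m-1}$ generators and relations. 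Second, you collapse the Roos--Jensen spectral sequence using $\operatorname{Ext}^q_R(M_i,S)=0$ for $q\geq1$, which gives $\operatorname{Ext}^n_R(M,S)\cong\varprojlim^n_i\operatorname{Hom}_R(M_i,S)$. Third, you kill this for $n>m$ with the Goblot--Mitchell bound. The citation is shorter and works for a general set $S$. Your argument makes it transparent that the bound is just the cohomological dimension of the indexing poset, at the price of importing two external results used nowhere else in the paper. Your telescope argument for $m=1$ is a clean elementary instance of the same mechanism.

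There are two caveats. First, discard the fallback filtration route. An $\aleph_m$-presentable module need not be a continuous union of $\aleph_{m-1}$-presentable \emph{submodules} with presentable factors: the chain $F_\alpha/K_\alpha$ coming from a presentation generally has non-injective transition maps. Worse, suppose such a filtration existed. The class $\{N:\operatorname{Ext}^m_R(N,S)=0\ \text{for all } S\in\mathsf{SfpInj}(R)\}$ is the left $\operatorname{Ext}^1$-orthogonal of the $(m-1)$st cosyzygies of strongly FP-injective modules, so Eklof's lemma would give dimension at most $m-1$, not $m$. This is already false for $m=1$. Take a field $k$ and the von Neumann regular ring $R=\prod_{\mathbb N}k$. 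Then $\mathsf{WfpProj}(R)=Proj$, yet $R/\bigoplus_{\mathbb N}k$ is countably presented and not projective. The extra unit of dimension comes precisely from the non-injective transition maps, which is what $\varprojlim^n$ (or the telescope) accounts for. Second, your remark that the result is ``sharpened by one'' relative to flat modules is an indexing slip. Here $\aleph_m$-presentable means $<\aleph_m$-presented, so the lemma is exactly the analogue of the classical flat bound, not an improvement of it.
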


\begin{proof} For an infinite $m$ the statement is trivially true, so assume that $m$ is a non-negative integer. Let $S$ in \cite[Proposition 2.1.]{PO} be a set of representatives of all finitely presented modules. Let $C$ be an $\aleph_m$-presentable module and $D$ be a strongly FP-injective module. Then $\operatorname{Ext}_R^i(C,D)=0$ for all $i>m$. This means that the $m$-th syzygy of every projective resolution of $C$ is a weakly FP-projective module. Hence, the weakly FP-projective dimension of $C$ does not exceed $m$.
    
\end{proof}

\begin{Corollary}\label{Injdimofsfpinj} Let $R$ be a left $<\aleph_m$-Noetherian ring. Then the injective dimension of any strongly FP-injective module is at most $m$.
    
\end{Corollary}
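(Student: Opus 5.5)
The approach is to use the Baer criterion, reduced to left ideals, together with a dimension count coming from Lemma \ref{alephmpresentable}. Let $D$ be a strongly FP-injective module. We want $\operatorname{Ext}^{m+1}_R(R/L,D)=0$ for every left ideal $L$. The standard Baer-type argument then gives injective dimension at most $m$: dimension shifting identifies $\operatorname{Ext}^{m+1}_R(R/L,D)$ with $\operatorname{Ext}^1_R(R/L,D^m)$, where $D^m$ is the $m$-th cosyzygy of an injective resolution of $D$. Vanishing of the latter for all $L$ says $D^m$ is injective. So it suffices to control $\operatorname{Ext}^{m+1}_R(R/L,D)$ for cyclic modules $R/L$.

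The key observation is that over a left $<\aleph_m$-Noetherian ring every cyclic module $R/L$ is $\aleph_m$-presentable. Indeed, $R$ is finitely generated, hence $<\aleph_m$-generated, so its submodule $L$ is $<\aleph_m$-generated by the Noetherian hypothesis. The presentation $0\to L\to R\to R/L\to 0$ therefore exhibits $R/L$ as having finitely many generators and fewer than $\aleph_m$ relations, so $R/L$ is $<\aleph_m$-presented, i.e.\ $\aleph_m$-presentable. (For $m=0$, $\aleph_0$-presentable means finitely presented, consistent with the classical Noetherian case.)

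Next apply Lemma \ref{alephmpresentable}: $R/L$ has weakly FP-projective dimension at most $m$. Concretely, the $m$-th syzygy $\Omega^m(R/L)$ of a projective resolution of $R/L$ lies in $\mathsf{WfpProj}(R)$. Dimension shifting gives
\[
\operatorname{Ext}^{m+1}_R(R/L,D)\cong\operatorname{Ext}^1_R(\Omega^m(R/L),D)=0,
\]
since $(\mathsf{WfpProj}(R),\mathsf{SfpInj}(R))$ is a cotorsion pair (Proposition \ref{module-pair}). More generally, by hereditarity all $\operatorname{Ext}^{i}_R(R/L,D)$ with $i>m$ vanish. Combined with the Baer reduction of the first paragraph, this shows $D^m$ is injective, so $\operatorname{id}D\le m$.

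The main obstacle I expect is the presentability step: making sure the definition of $<\lambda$-Noetherian, applied with $\lambda=\aleph_m$, really yields that $R/L$ is $\aleph_m$-presentable in the sense required by Lemma \ref{alephmpresentable}. For finite $m\ge1$, $\aleph_m$ is regular, and $<\aleph_m$-generated relations suffice for $<\aleph_m$-presentedness. The case $m=0$ has to be read as ordinary Noetherianity giving finitely presented cyclic modules. Everything else is routine dimension shifting.
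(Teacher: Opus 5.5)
Your proof is correct and follows the paper's own argument. Over a left $<\aleph_m$-Noetherian ring every cyclic module $R/L$ is $\aleph_m$-presentable, so Lemma \ref{alephmpresentable} gives $\operatorname{Ext}^{m+1}_R(R/L,D)=0$, and the Baer criterion applied to the $m$-th cosyzygy finishes. You simply write out the dimension-shifting steps, and the $m=0$ case, that the paper's two-line proof leaves implicit.
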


\begin{proof} Since $R$ is a left $<\aleph_m$-Noetherian ring, every left ideal of $R$ is $<\aleph_m$-generated, which means that every cyclic $R$-module is $\aleph_m$-presentable. The statement follows from the above lemma and the Baer criterion for injectivity. 
    
\end{proof}

\begin{Remark}\label{simson}(i) Every ring of cardinality $\aleph_m$ is a $<\aleph_{m+1}$-Noetherian ring. 

(ii) There exists an analogous result of Corollary \ref{Injdimofsfpinj} dealing with flat modules. Namely, if a ring $R$ has cardinality at most $\aleph_m$ then the projective dimension of any flat module is at most $m+1$\em{ (cf. \cite[Theorem]{Simson})}. However, the cardinality hypothesis cannot be replaced by the assumption that $R$ is
$<\aleph_m$-Noetherian. Indeed, let $d\geq1$ be an integer,
choose a field $k$ of cardinality $\aleph_{d-1}$, and put
\[
    R=k[x_1,\ldots,x_d],
    \qquad Q=k(x_1,\ldots,x_d).
\]
The ring $R$ is commutative Noetherian, and hence
$<\aleph_m$-Noetherian for every non-negative integer $m$.
Its field of fractions $Q$ is a flat $R$-module, being
a localization of $R$, whereas
\cite[Corollary~2.59]{OsofskyDimensions} gives  $\operatorname{pd}_R Q=d$.
Taking $d=m+2$ disproves even the bound
$\operatorname{pd}_R F\leq m+1$ for flat modules over
$<\aleph_m$-Noetherian rings. More generally, since $d$
is arbitrary, there is no finite bound depending only
on $m$ that holds for all flat modules over all such rings.

(iii) The analogous result of Corollary \ref{Injdimofsfpinj} for absolutely pure modules fails. Note that absolutely pure modules of injective dimension at most one are strongly FP-injective modules. So, a countable ring $R$ such that all absolutely pure left $R$-modules have injective dimension at most one would be left coherent. For an example of a countable commutative ring that is not coherent, see \cite[Ex. 17 on page 161]{Lam}.

\end{Remark}

\section{Weakly FP-projective periodicity over $<\aleph_{m}$-coherent rings, $m\in\mathbb{N}$}

As already mentioned in the introduction, it has been an open problem since \cite{EK} whether cycles of acyclic complexes of weakly FP-projective modules are always weakly FP-projective. It is easy to see that Corollary \ref{Injdimofsfpinj} gives a positive answer for $<\aleph_m$-Noetherian rings when $m$ is an integer. In this section, we push this result a little further and establish the result over $<\aleph_m$-coherent rings for every integer $m$.

\begin{Definition}

(i) For a regular cardinal $\lambda$, a submodule $A$ of an $R$-module $B$
is said to be \emph{$\lambda$-pure} in $B$ if, for every
$\lambda$-presentable $R$-module $\Lambda$, every morphism
$f\colon \Lambda \to B/A$ can be lifted to $B$.

(ii)
A short exact sequence
\[
0 \longrightarrow A \longrightarrow B \longrightarrow C
  \longrightarrow 0
\]
is said to be \emph{$\lambda$-pure} if the image of $A$ is
$\lambda$-pure in $B$.

(iii) A complex $C$ is said to be $\lambda$-pure acyclic if it is acyclic and for each $n\in\mathbb{Z}$ the short exact sequence:
\[0\rightarrow Z_n(C)\rightarrow C_n\rightarrow B_{n-1}(C)\rightarrow 0\] is $\lambda$-pure.

\end{Definition}

\begin{Example}\label{exampleofNmpureacyclic}
Let $S$ be a strongly FP-injective module, and let $E$ denote an
injective resolution of $S$:
\[
E = 0 \longrightarrow S \longrightarrow I^0
  \longrightarrow I^1 \longrightarrow I^2
  \longrightarrow \cdots.
\]
Denote by $S^m$ the $m$-th cosyzygy of $S$.
 Then, by Lemma \ref{alephmpresentable}, the complex
\[
E^m =0 \longrightarrow S^m \longrightarrow I^m
  \longrightarrow I^{m+1} \longrightarrow \cdots
\]
is an $\aleph_m$-pure acyclic complex.
\end{Example}

\begin{Definition}
An $R$-module $M$ is \emph{$\lambda$-pure projective} if and only if
it is a direct summand of a direct sum of $\lambda$-presentable modules.
Equivalently, $\operatorname{Hom}_R(M,-)$ is exact with respect to
$\lambda$-pure short exact sequences.
\end{Definition}

The following theorem appears as Theorem 1.3 in \cite{SDW}. 

\begin{Theorem}{\emph{(\cite[Theorem 1.3]{SDW})}}
A complex $E$ in a locally $\lambda$-presentable additive category
$\mathcal{A}$ is $\lambda$-pure acyclic if and only if any chain map
$f\colon X \to E$ from a complex of $\lambda$-pure projective objects
in $\mathcal{A}$ to $E$ is null-homotopic.
\end{Theorem}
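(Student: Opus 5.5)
Both directions reduce to a statement about a single short exact sequence. By definition, $E$ is $\lambda$-pure acyclic exactly when it is acyclic and each sequence $0\to Z_nE\to E_n\to B_{n-1}E\to 0$ is $\lambda$-pure, that is, each such sequence stays exact under $\operatorname{Hom}_{\mathcal A}(\Lambda,-)$ for every $\lambda$-presentable $\Lambda$. Here $\mathcal A$ is locally $\lambda$-presentable and additive, so kernels, cokernels and images exist and $\lambda$-purity makes sense. I would prove the two implications separately: ``only if'' is a degreewise lifting and homotopy construction, and ``if'' tests $E$ against disk complexes.

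\emph{Only if.} Assume $E$ is $\lambda$-pure acyclic. First I would show that every $\lambda$-pure projective object $P$ has the lifting property along each deflation $E_n\to B_{n-1}E$. For a $\lambda$-presentable $P$ this is the definition of $\lambda$-purity. Lifting is preserved by coproducts, by the universal property, and by direct summands, so it holds for all $\lambda$-pure projectives. Equivalently, $\operatorname{Hom}(P,-)$ makes the cycle sequences of $E$ short exact, and therefore $\operatorname{Hom}(P,E)$ is an acyclic complex of abelian groups.

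Now let $f\colon X\to E$ be a chain map with each $X_n$ a $\lambda$-pure projective. I would build maps $s_n\colon X_n\to E_{n+1}$ with $f_n=d s_n+s_{n-1}d$ by the standard construction, as in the proof that a bounded-above complex of projectives mapping into an acyclic complex is null-homotopic. Suppose $s_{n-1}$ has been chosen. Then $g=f_n-s_{n-1}d$ satisfies $dg=0$, so $g$ lands in $Z_nE=B_nE$, and we lift it through $E_{n+1}\to B_nE$ using $\lambda$-pure projectivity of $X_n$.

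Since $X$ is unbounded, there is no starting point for this induction. I would handle this as follows.
\begin{itemize}
\item Each $\lambda$-pure exact sequence is a directed colimit of split sequences; this is the analogue of the Lazard/Kaplansky argument, and it uses local $\lambda$-presentability.
\item Consequently $\operatorname{Hom}(X_n,-)$ splits each sequence $0\to Z_nE\to E_n\to Z_{n-1}E\to0$ compatibly.
\item I would then apply the argument of \cite{SDW} (Neeman's method for flat and pure acyclic complexes), writing $E$ as a $\lambda$-directed colimit of contractible complexes.
\end{itemize}
I expect this unbounded case to be the main obstacle, and where possible I would cite Theorem 1.3 of \cite{SDW} for it rather than reprove it.

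\emph{If.} Assume every chain map from a complex of $\lambda$-pure projectives into $E$ is null-homotopic. Take $X=D^n(\Lambda)$ for a $\lambda$-presentable $\Lambda$; this is a complex of $\lambda$-pure projectives. Chain maps $D^n(\Lambda)\to E$ correspond to maps $\Lambda\to E_{n+1}$, and null-homotopy of all of them shows that every map $\Lambda\to B_nE$ lifts to $E_{n+1}$. Acyclicity comes from testing against $S^n(\Lambda)$ as well, together with a generating set of $\lambda$-presentable objects: every map $\Lambda\to Z_nE$ factors through $B_nE$, and $\lambda$-presentables generate, so $Z_nE=B_nE$. Purity of the cycle sequences then follows from the lifting property obtained from the disks.
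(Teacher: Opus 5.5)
The paper does not prove this statement. It quotes it from \cite[Theorem 1.3]{SDW} and uses only its module case, Theorem~\ref{nullhomotopic}. Your sketch therefore has to stand on its own, and it does not. The only nontrivial part is the ``only if'' direction for an unbounded complex $X$, and you leave it to ``cite Theorem 1.3 of \cite{SDW}'', which is the statement being proved. As a proof this is circular.

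\emph{Why your sketched ingredients do not close the gap.} Your reduction to the lifting property of each $X_k$ along $E_{k+1}\to B_kE$ is correct. So is the consequence that every $\operatorname{Hom}(X_k,E)$ is acyclic. This settles complexes with $X_k=0$ for $k\ll 0$. No degreewise information can settle the general case, as the following example shows.

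Over $R=\mathbb{Z}/4$, let $E_k=R$ for all $k$, with every differential equal to multiplication by $2$. This complex is acyclic and has projective components, so every lifting step of your induction is available for $X=E$. Yet $1_E$ is not null-homotopic: a homotopy $s_k=a_k\cdot(-)$ would force $1=2(a_k+a_{k-1})$ in $\mathbb{Z}/4$. What fails here is purity: $0\to 2R\to R\to 2R\to 0$ does not split, although $2R$ is finitely presented. So any proof must use the purity of $E$ globally, not degree by degree.

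Your three bullets do not do this:
\begin{enumerate}
\item The claim that $\operatorname{Hom}(X_n,-)$ ``splits each sequence compatibly'' does not follow from anything you established. $\lambda$-purity only makes the cycle sequences exact under $\operatorname{Hom}(X_n,-)$. Degreewise information, even projectivity as in the example, does not assemble into a homotopy.
\item Writing $E$ as a $\lambda$-directed colimit of contractible complexes does not help by itself. $\operatorname{Hom}_{K(\mathcal{A})}(X,-)$ passes through $\lambda$-directed colimits in general only when $X$ is $\lambda$-presentable in $\operatorname{Ch}(\mathcal{A})$. For $\lambda=\aleph_0$ that means bounded complexes of finitely presentable objects, and a general complex of $\lambda$-pure projectives is not of this kind.
\end{enumerate}
That global step is the actual content of the theorem; it is where the Neeman-type argument lies, and the proposal does not supply it.

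\emph{The ``if'' direction needs a repair, though an easy one.} $D^n(\Lambda)$ is contractible, so every chain map out of it is null-homotopic whatever $E$ is. The disk test therefore yields no lifting property. The spheres do all the work:
\begin{enumerate}
\item A chain map $S^n(\Lambda)\to E$ is a map $g\colon\Lambda\to Z_nE$, and a null-homotopy is a map $s\colon\Lambda\to E_{n+1}$ with $g=ds$.
\item So the hypothesis on spheres says exactly that $\operatorname{Hom}(\Lambda,E)$ is acyclic for every $\lambda$-presentable $\Lambda$.
\item For modules, taking $\Lambda=R$ gives acyclicity of $E$.
\item Taking general $\Lambda$ then lets every map $\Lambda\to Z_nE=B_nE$ lift to $E_{n+1}$, which is precisely $\lambda$-purity of the cycle sequences.
\end{enumerate}
In a non-abelian $\mathcal{A}$, your appeal to images and to $Z_nE=B_nE$ must also be rephrased in terms of these Hom groups.
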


As a special case, we obtain the following.

\begin{Theorem}\label{nullhomotopic}
A complex $E$ in $R\text{-}\mathrm{Mod}$ is $\lambda$-pure acyclic
if and only if any chain map $f\colon X \to E$ from a complex of
$\lambda$-pure projective modules to $E$ is null-homotopic.
\end{Theorem}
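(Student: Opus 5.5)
The plan is to derive Theorem~\ref{nullhomotopic} as the special case $\mathcal{A}=R\text{-}\mathrm{Mod}$ of \cite[Theorem 1.3]{SDW}. Two things must be checked.

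First, $R\text{-}\mathrm{Mod}$ is a locally $\lambda$-presentable additive category for every regular infinite cardinal $\lambda$. It is cocomplete and additive. It is locally finitely presentable, and every module is a directed colimit of finitely presented modules. Since $\aleph_0\triangleleft\lambda$ by \cite[Example 2.13(1)]{AR}, every module is also a $\lambda$-directed colimit of $<\lambda$-presented modules. As recalled at the beginning of Section 4, the $\lambda$-presentable objects of $R\text{-}\mathrm{Mod}$ are exactly the $<\lambda$-presented modules, and these form a set up to isomorphism. This gives local $\lambda$-presentability.

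Second, the notions in \cite{SDW} must agree with those defined just above. The $\lambda$-pure projective objects of $\mathcal A$ are the direct summands of coproducts of $\lambda$-presentable objects. By the previous paragraph, these are exactly the $\lambda$-pure projective modules of the Definition above. Likewise, $\lambda$-purity of a short exact sequence in $\mathcal A$ means that every $\lambda$-presentable object lifts along the epimorphism, which is the Definition of $\lambda$-pure given here.

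The one point needing care is the notion of a $\lambda$-pure acyclic complex. In \cite{SDW}, such a complex is presumably defined as one that is acyclic with all the short exact sequences $0\to Z_n\to C_n\to Z_{n-1}\to0$ being $\lambda$-pure, and possibly phrased instead via $\lambda$-directed colimits of split complexes. In the first phrasing it coincides with the definition here, since acyclicity gives $B_{n-1}=Z_{n-1}$. If \cite{SDW} uses the colimit phrasing, I would invoke the equivalence proved there to pass to the sequence formulation. With these identifications, the theorem is a direct specialization and no further argument is needed.
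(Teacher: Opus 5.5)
Your proposal is correct and takes the same route as the paper, which presents this theorem simply as the special case $\mathcal{A}=R\text{-}\mathrm{Mod}$ of \cite[Theorem 1.3]{SDW}, with no further argument. Your checks are exactly what that specialization tacitly relies on: that $R\text{-}\mathrm{Mod}$ is locally $\lambda$-presentable, that $\lambda$-pure projectives agree, and that $\lambda$-pure acyclicity agrees, using $B_{n-1}=Z_{n-1}$ for acyclic complexes.
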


\begin{Example}
Let $E^m$ be as in Example \ref{exampleofNmpureacyclic}, and let $X$ be a complex of
$\aleph_m$-presentable modules. Then any chain map
$f\colon X \to E^m$ is null-homotopic.
\end{Example}

Before stating the main results of this section we  introduce the following cotorsion pairs.

\begin{Definition}\label{WNmP} Let $\mathcal{C}\aleph_m\mathcal{P}$ be a set of representatives of all $\aleph_m$-presentable weakly FP-projective modules. Consider the cotorsion pair
\[\bigl( ^{\perp_1}((\mathcal{C}\aleph_m\mathcal{P})^{\perp_1}),(\mathcal{C}\aleph_m\mathcal{P})^{\perp_1}\bigr)=\bigl(\mathcal{C}\aleph_mP\text{-}proj,\mathcal{C}\aleph_mP\text{-}inj\bigr),\]
that is cogenerated by the set $\mathcal{C}\aleph_m\mathcal{P}$. We call $\mathcal{C}$-$\aleph_mP\text{-}$projective any module in the class $\mathcal{C}\aleph_mP\text{-}proj$.
    
\end{Definition}

\begin{Remark} (i) For $m=0$ the above cotorsion pair is exactly the pair $\bigl(\mathsf{fpProj},\mathsf{fpInj}\bigr)$. 

(ii) For each $m$ it is easy to see that the inclusions
$\mathsf{fpProj}\subset \mathcal{C}\aleph_mP\text{-}proj\subset \mathsf{WfpProj}$
hold true. 

(iii) By (ii) we also have $\mathsf{SfpInj}\subset \mathcal{C}\aleph_mP\text{-}inj\subset \mathsf{fpInj}$.

(iv) \textbf{Notation:} Before the class of weakly FP-projective modules was named as such in \cite{Bazzoni2},  the symbol $\mathcal{C}$ was often used to denote the class $\mathsf{WfpProj}$. Here, we use $\mathcal{C}\aleph_mP\text{-}proj$ for the newly defined class since the following inclusion holds $\mathcal{C}\aleph_mP\text{-}proj\subset\mathsf{WfpProj}\bigcap \aleph_mP\text{-}proj$, where by $\aleph_mP\text{-}proj$ we mean the left class of the cotorsion pair that is cogenerated by the $\aleph_m$-presentable modules. Using the old notation, the above inclusion would be written as $\mathcal{C}\aleph_mP\text{-}proj\subset\mathcal{C}\bigcap( \aleph_mP\text{-}proj)$.
\end{Remark}

\begin{Lemma}\label{Filt} Let $m>0$ be such that $\aleph_m$ is a regular cardinal. Then a module $M$ is $\mathcal{C}$-$\aleph_mP\text{-}$projective if and only if it is filtered by $\aleph_m$-presentable weakly FP-projective modules. Equivalently, the equality $\operatorname{Filt}(\mathcal{C}\aleph_m\mathcal{P})=\mathcal{C}\aleph_mP\text{-}proj$ holds.
    
\end{Lemma}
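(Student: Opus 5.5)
The equality $\operatorname{Filt}(\mathcal{C}\aleph_m\mathcal{P})=\mathcal{C}\aleph_mP\text{-}proj$ will follow from the Eklof--Trlifaj description of the left class of a set-cogenerated cotorsion pair, combined with a deconstruction argument that removes the retract closure.

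\textbf{Step 1: the general description.} The set $\mathcal{C}\aleph_m\mathcal{P}$ contains $R$, since $R$ is finitely presented and projective, hence weakly FP-projective. By \cite[Theorem 5.16]{StovicekExact}, applied as in Proposition~\ref{module-pair}, we get
\[
\mathcal{C}\aleph_mP\text{-}proj=\operatorname{Retr}\operatorname{Filt}(\mathcal{C}\aleph_m\mathcal{P}).
\]
The inclusion $\operatorname{Filt}(\mathcal{C}\aleph_m\mathcal{P})\subseteq\mathcal{C}\aleph_mP\text{-}proj$ is Eklof's lemma. So it remains to remove $\operatorname{Retr}$: we must show that a direct summand of a $\mathcal{C}\aleph_m\mathcal{P}$-filtered module is again $\mathcal{C}\aleph_m\mathcal{P}$-filtered.

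\textbf{Step 2: deconstruction.} We use the Hill lemma in the form of \cite[Theorem 7.10]{GT}-type results (Kaplansky-type deconstruction). If $M\oplus N=F$ with $F$ filtered by $<\aleph_m$-presented modules, then, since $\aleph_m$ is regular and uncountable for $m>0$, the Hill family lets us build, by a back-and-forth argument, a continuous chain of submodules of $M$. Each member of the chain is the intersection of $M$ with a Hill-closed subset, chosen so that it is a direct summand compatible with the decomposition $M\oplus N$. Successive quotients are $<\aleph_m$-presented, because each step adds fewer than $\aleph_m$ generators and the Hill closure keeps presentations small. This is the standard proof that the class of modules filtered by $<\kappa$-presented modules of a given deconstructible class is closed under summands for regular uncountable $\kappa$; compare the proof that flat modules are $\aleph_1$-filtered. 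Concretely, I would show that $M$ is $\aleph_m$-deconstructible: the subquotients are $\aleph_m$-presentable and are direct summands of members of $\operatorname{Filt}(\mathcal{C}\aleph_m\mathcal{P})$.

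\textbf{Step 3: identifying the subquotients.} Each subquotient $Q$ arising in Step 2 is $\aleph_m$-presentable and lies in $\mathcal{C}\aleph_mP\text{-}proj\subseteq\mathsf{WfpProj}$, since that class is closed under summands and extensions and the filtration steps are compatible with it. Hence $Q$ is isomorphic to a member of $\mathcal{C}\aleph_m\mathcal{P}$, so $M\in\operatorname{Filt}(\mathcal{C}\aleph_m\mathcal{P})$. The key observation is that membership in $\mathcal{C}\aleph_m\mathcal{P}$ is an intrinsic property, namely being weakly FP-projective and $\aleph_m$-presentable. Therefore any $\aleph_m$-presentable weakly FP-projective quotient counts, and no finer control on the subquotients is needed.

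The main obstacle is Step 2/3: ensuring that the quotients $M_{\alpha+1}/M_\alpha$ are weakly FP-projective. I would guarantee this by choosing the chain so that each $M_\alpha$ has $F/M_\alpha\cong (M/M_\alpha)\oplus(N/N_\alpha)$ with $F/F_\alpha$ filtered. Then $M/M_\alpha$ lies in $\mathcal{C}\aleph_mP\text{-}proj$. Consequently $M_{\alpha+1}/M_\alpha$, being the kernel of $M/M_\alpha\to M/M_{\alpha+1}$ with both terms in the left class, lies in $\mathsf{WfpProj}$, because $\mathsf{WfpProj}$ is resolving (the pair of Proposition~\ref{module-pair} is hereditary). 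Alternatively, $M_{\alpha+1}/M_\alpha$ is a summand of $F_{\alpha+1}/F_\alpha$, which is filtered.
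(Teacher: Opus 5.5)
Your proposal is correct and follows the same route as the paper: Eklof's lemma gives $\operatorname{Filt}(\mathcal{C}\aleph_m\mathcal{P})\subseteq\mathcal{C}\aleph_mP\text{-}proj$, and the converse rests on the same observation, namely that any $\aleph_m$-presentable module in the left class is weakly FP-projective and hence isomorphic to a member of $\mathcal{C}\aleph_m\mathcal{P}$. The only difference is that your Steps 1--2 (retracts of filtered modules, then the Hill-lemma back-and-forth that removes $\operatorname{Retr}$) re-derive the Kaplansky theorem for cotorsion pairs, \cite[Theorem 7.13]{Approximations}, which the paper simply cites.
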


\begin{proof}
The inclusion $\operatorname{Filt}(\mathcal{C}\aleph_m\mathcal{P})\subset \mathcal{C}\aleph_mP\text{-}proj$ is \cite[Corollary 6.14]{Approximations}.

For the inclusion $\mathcal{C}\aleph_mP\text{-}proj\subset \operatorname{Filt}(\mathcal{C}\aleph_m\mathcal{P})$ we just need to observe that for any $\aleph_m$-presentable $\mathcal{C}$-$\aleph_mP\text{-}$projective module $N$ a representative of $N$ belongs to the set $\mathcal{C}\aleph_m\mathcal{P}$ of Definition \ref{WNmP} and then use \cite[Theorem 7.13]{Approximations}. Indeed, let $N$ be an $\aleph_m$-presentable $\mathcal{C}$-$\aleph_mP\text{-}$projective module. Then, by the above remark, $N$ is an  $\aleph_m$-presentable weakly $FP\text{-}$projective module which means exactly that a representative of $N$ belongs to $\mathcal{C}\aleph_m\mathcal{P}$.
    
\end{proof}

The following Proposition appears as Proposition 4.3 in \cite{StovicekHillLemma}. We note that what we call here $\kappa$-presentable objects are called $<\kappa$-presentable objects in \cite{StovicekHillLemma}. The same is true for locally $\kappa$-presentable categories in this paper and locally $<\kappa$-presentable categories in \cite{StovicekHillLemma}. In addition, since we assume homological notation the complexes called bounded above here are called bounded below complexes in \cite{StovicekHillLemma}.

\begin{Proposition}[{\v{S}\v{t}ov\'{\i}\v{c}ek, Proposition~4.3}]
Assume that $\kappa$ is an infinite regular cardinal and that
$\mathcal{G}$ is a Grothendieck category which is locally
$\kappa$-presentable. Let $\mathcal{F}\subseteq\mathcal{G}$
be deconstructible. Choose a set $\mathcal{S}$ of all
representatives of $\kappa$-presentable
objects belonging to $\mathcal{F}$, and assume
\[
\mathcal{F}=\operatorname{Filt}\text{-}\mathcal{S}.
\]
Then every complex whose terms belong to $\mathcal{F}$ admits
a filtration with successive quotients bounded above complexes having all terms in $\mathcal{S}$. 
\end{Proposition}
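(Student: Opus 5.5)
The plan is to build the filtration of a complex $X$ with $X_n\in\mathcal F$ by transfinite recursion, using the Hill Lemma degreewise. At each step we add a subcomplex that is ``small'' and bounded above.

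\textbf{Step 1 (Hill families).} For every $n$, fix an $\mathcal S$-filtration of $X_n$, which exists because $\mathcal F=\operatorname{Filt}\text{-}\mathcal S$. Apply the Hill Lemma to it. This gives a family $\mathcal H_n$ of subobjects of $X_n$ with the following properties:
\begin{enumerate}[label=(H\arabic*)]
\item $\mathcal H_n$ contains $0$ and $X_n$, and is closed under arbitrary sums (unions).
\item For $M\subseteq N$ in $\mathcal H_n$, the quotient $N/M$ is $\mathcal S$-filtered. In particular $X_n/M\in\mathcal F$.
\item For $N\in\mathcal H_n$ and a $\kappa$-presentable (equivalently, $<\kappa$-generated) subobject $K\subseteq X_n$, there is $P\in\mathcal H_n$ with $N\cup K\subseteq P$ and $P/N$ $\kappa$-presentable.
\end{enumerate}
Local $\kappa$-presentability of $\mathcal G$ ensures that images of $\kappa$-presentable objects are again ``$<\kappa$-generated'' subobjects, so (H3) can be iterated.

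\textbf{Step 2 (one step of the recursion).} Suppose $Y\subseteq X$ is a subcomplex with $Y_n\in\mathcal H_n$ for all $n$, and $Y\neq X$. Choose $n_0$ and a $\kappa$-presentable subobject $K\subseteq X_{n_0}$ not contained in $Y_{n_0}$. We build $Y'\supseteq Y$ by downward recursion on degrees:
\begin{itemize}
\item Put $Y'_n=Y_n$ for $n>n_0$.
\item By (H3), choose $Y'_{n_0}\in\mathcal H_{n_0}$ containing $Y_{n_0}+K$ with $Y'_{n_0}/Y_{n_0}$ $\kappa$-presentable.
\item Having $Y'_n$, note that $d(Y'_n)\subseteq Y_{n-1}+d(\text{a }\kappa\text{-presentable subobject})$. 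By (H3) again, choose $Y'_{n-1}\in\mathcal H_{n-1}$ containing $Y_{n-1}+d(Y'_n)$ with $Y'_{n-1}/Y_{n-1}$ $\kappa$-presentable.
\end{itemize}
Since the differential lowers degree, only the downward direction needs enlarging and no back-and-forth closure is required. Hence $Y'$ is a subcomplex.

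The quotient $Y'/Y$ vanishes in degrees $>n_0$, so it is bounded above in homological notation. Each term $Y'_n/Y_n$ is $\kappa$-presentable and, by (H2), lies in $\operatorname{Filt}\text{-}\mathcal S=\mathcal F$. It is therefore isomorphic to a member of $\mathcal S$. Moreover $Y'_n\in\mathcal H_n$ keeps the invariant.

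\textbf{Step 3 (transfinite filtration).} Set $X^0=0$ and $X^{\alpha+1}=(X^\alpha)'$ as in Step 2. At limit ordinals take unions; these remain in each $\mathcal H_n$ by (H1) and are subcomplexes. A cardinality argument (each step adds a new element) shows the process reaches $X$. This yields a continuous filtration of $X$ whose successive quotients are bounded above complexes with all terms in $\mathcal S$.

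The main obstacle is Step 1: the precise form of the Hill Lemma in a locally $\kappa$-presentable Grothendieck category, in particular property (H3) with a $\kappa$-presentable (not merely $<\kappa$-generated) relative quotient. This is where regularity of $\kappa$ and the deconstructibility hypothesis are genuinely used. I would follow Štovíček's Grothendieck-category version of the Hill Lemma for that step. The rest is the bookkeeping described above.
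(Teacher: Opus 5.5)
Your argument is correct. It is essentially the proof in \v{S}\v{t}ov\'{\i}\v{c}ek's paper, which the present paper only cites and does not reproud: Hill families in each degree, a one-directional closure under the differential starting from a $<\kappa$-generated subobject (so each new factor vanishes above $n_0$), and unions at limit stages. Two points need tidying. First, $\kappa$-presentable and $<\kappa$-generated are not equivalent in general, but you only need your (H3) for $<\kappa$-generated subobjects, which is what (H4) of the Grothendieck Hill lemma provides; apply it to $d(L)$, where $Y'_n=Y_n+L$ with $L$ a $<\kappa$-generated subobject, which exists because $Y'_n/Y_n$ is $\kappa$-presentable. Second, termination should be argued via well-poweredness of $\operatorname{Ch}(\mathcal G)$, since objects of $\mathcal G$ have no ``elements''.
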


For $\mathcal{G}=R\text{-}Mod$ the above proposition combined with Lemma \ref{Filt} immediately gives:

\begin{Corollary}\label{Filtabove} Let $m>0$ be such that $\aleph_m$ is a regular cardinal. Then any complex $X$ of $\mathcal{C}$-$\aleph_mP\text{-}$projective modules is filtered by bounded above complexes of $\aleph_m$-presentable weakly FP-projective modules.

\end{Corollary}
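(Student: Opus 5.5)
This is a direct specialization of Šťovíček's Proposition 4.3, stated just above, with $\mathcal{G}=R\text{-}\mathrm{Mod}$, $\kappa=\aleph_m$, $\mathcal{F}=\mathcal{C}\aleph_mP\text{-}proj$ and $\mathcal{S}=\mathcal{C}\aleph_m\mathcal{P}$. The plan is to verify each hypothesis of that proposition in turn and then translate its conclusion.

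First, $R\text{-}\mathrm{Mod}$ is a Grothendieck category. It is locally $\kappa$-presentable for every regular infinite cardinal $\kappa$, in particular for $\kappa=\aleph_m$, which is regular by assumption; this is standard, as recalled at the start of Section 4. Second, the class $\mathcal{F}=\mathcal{C}\aleph_mP\text{-}proj$ must be deconstructible. By Lemma~\ref{Filt} we have
\[
\mathcal{F}=\operatorname{Filt}(\mathcal{C}\aleph_m\mathcal{P}),
\]
and $\mathcal{C}\aleph_m\mathcal{P}$ is a set, so $\mathcal{F}$ is deconstructible by definition.

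Third, we need a set of representatives of the $\aleph_m$-presentable objects of $\mathcal{F}$, and we claim $\mathcal{C}\aleph_m\mathcal{P}$ serves. Every $\aleph_m$-presentable module in $\mathcal{F}$ is weakly FP-projective, by the inclusion $\mathcal{C}\aleph_mP\text{-}proj\subset\mathsf{WfpProj}$ from the Remark, so it is isomorphic to a member of $\mathcal{C}\aleph_m\mathcal{P}$. Conversely, each member of $\mathcal{C}\aleph_m\mathcal{P}$ lies in ${}^{\perp_1}\bigl((\mathcal{C}\aleph_m\mathcal{P})^{\perp_1}\bigr)=\mathcal{F}$ and is $\aleph_m$-presentable. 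Hence $\mathcal{S}=\mathcal{C}\aleph_m\mathcal{P}$ is exactly such a set, and the hypothesis $\mathcal{F}=\operatorname{Filt}\text{-}\mathcal{S}$ is again Lemma~\ref{Filt}.

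The proposition then says that every complex $X$ with terms in $\mathcal{F}$ has a filtration whose successive quotients are bounded above complexes with all terms in $\mathcal{S}$. Since $\mathcal{S}$ consists of $\aleph_m$-presentable weakly FP-projective modules, this is the claim. The only delicate point is the identification of $\mathcal{S}$ in the third step, which rests on the inclusion $\mathcal{F}\subset\mathsf{WfpProj}$; the rest is bookkeeping, including keeping the paper's conventions on presentability and on homological versus cohomological indexing straight.
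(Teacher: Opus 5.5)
Your proposal is correct and is essentially the paper's own argument: the paper obtains the corollary in one line by applying \v{S}\v{t}ov\'{\i}\v{c}ek's Proposition~4.3 with $\mathcal{G}=R\text{-}\mathrm{Mod}$ together with Lemma~\ref{Filt}. Your check that $\mathcal{C}\aleph_m\mathcal{P}$ is a set of representatives of the $\aleph_m$-presentable objects of $\mathcal{C}\aleph_mP\text{-}proj$, via the inclusion into $\mathsf{WfpProj}(R)$, is the same observation the paper makes in the proof of Lemma~\ref{Filt}.
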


We are ready to state the main result of this section:

\begin{Theorem}\label{Main} Let $m>0$ be such that $\aleph_m$ is a regular cardinal. Assume that $X$ is a complex of $\mathcal{C}$-$\aleph_mP\text{-}$projective modules and that $E$ is an $\aleph_m$-pure acyclic complex of modules in the class $\mathcal{C}\aleph_mP\text{-}inj$. Then any chain map $X\rightarrow E$ is null-homotopic.
    
\end{Theorem}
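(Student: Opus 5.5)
The plan is to reduce, via Corollary \ref{Filtabove}, to the case where $X$ is a bounded above complex of $\aleph_m$-presentable weakly FP-projective modules. Then I would handle that case with Theorem \ref{nullhomotopic}, and finally glue along the filtration using an Ext-vanishing argument in the homotopy category.

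First I would rephrase null-homotopy as Ext vanishing. For complexes $X,E$ there is the standard isomorphism
\[
\operatorname{Hom}_{K(R)}(X,\Sigma^{-1}E)\cong\operatorname{Ext}^1_{\mathrm{dw}}(X,E),
\]
where $\operatorname{Ext}^1_{\mathrm{dw}}$ classifies degreewise split extensions. So the statement is equivalent to the vanishing of $\operatorname{Ext}^1_{\mathrm{dw}}(X,\Sigma^{n}E)$ for all $n$. Note that the hypotheses on $E$ are stable under shifts. If $0\to E\to W\to X\to 0$ is degreewise exact and $X_n\in\mathcal{C}\aleph_mP\text{-}proj$, $E_n\in\mathcal{C}\aleph_mP\text{-}inj$, then the sequence is automatically degreewise split. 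Hence $\operatorname{Ext}^1_{\mathrm{dw}}(X,E)=\operatorname{Ext}^1_{\operatorname{Ch}(R)}(X,E)$ in this setting.

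Second, the bounded above case. Let $Y$ be a bounded above complex of $\aleph_m$-presentable weakly FP-projective modules. Each $Y_n$ is $\aleph_m$-presentable, hence $\aleph_m$-pure projective. Since $E$ is $\aleph_m$-pure acyclic, Theorem \ref{nullhomotopic} says every chain map $Y\to\Sigma^nE$ is null-homotopic. Therefore $\operatorname{Ext}^1_{\operatorname{Ch}(R)}(Y,\Sigma^n E)=0$ for all $n$. Here the $\mathcal{C}\aleph_mP\text{-}inj$ hypothesis on the components of $E$ is what guarantees that extensions are degreewise split.

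Third, the gluing step. By Corollary \ref{Filtabove}, $X$ has a continuous filtration $(X^\alpha)_{\alpha\le\sigma}$ whose successive quotients $X^{\alpha+1}/X^\alpha$ are bounded above complexes of the type treated in the second step. I would apply the Eklof lemma in the abelian category $\operatorname{Ch}(R)$: if $\operatorname{Ext}^1_{\operatorname{Ch}(R)}(Q,\Sigma^nE)=0$ for every quotient $Q$ of the filtration, then $\operatorname{Ext}^1_{\operatorname{Ch}(R)}(X,\Sigma^nE)=0$. This yields the theorem by the first step.

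The main obstacle is the first step. One must check that the successive quotients and $X$ itself have components in $\mathcal{C}\aleph_mP\text{-}proj$, so that every abelian extension by $E$ is degreewise split and the ordinary Ext agrees with the homotopy Hom. The quotients have components that are $\aleph_m$-presentable weakly FP-projective, hence they lie in $\mathcal{C}\aleph_m\mathcal{P}$, so this is fine. For $X$ itself it holds by hypothesis, and this is precisely where the pair of Definition \ref{WNmP} is used. If Eklof's lemma needed care with shifts, I would apply it separately for each $n$, since $\Sigma^nE$ satisfies the same hypotheses.
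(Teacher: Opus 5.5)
Your proposal is correct and follows the paper's proof essentially step for step. You identify $\operatorname{Hom}_{K(R)}$ with $\operatorname{Ext}^1_{\operatorname{Ch}(R)}$ using the componentwise vanishing $\operatorname{Ext}^1_R(X_k,E_n)=0$. You then reduce via Corollary \ref{Filtabove} and the Eklof lemma to bounded above complexes of $\aleph_m$-presentable weakly FP-projective modules, and finish with Theorem \ref{nullhomotopic}.

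One small point: with the paper's convention $\Sigma(C)_n=C_{n-1}$, the shift in your displayed isomorphism should read $\operatorname{Ext}^1_{\mathrm{dw}}(X,E)\cong\operatorname{Hom}_{K(R)}(X,\Sigma E)$. Since you quantify over all shifts, this is immaterial.
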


\begin{proof} Since $\operatorname{Ext}^1_R(X_k,E_n)=0$ for every $k,\ n$,
we may use the following well-known isomorphism:
\[\operatorname{Hom}_{K(R)}(X,E)=\operatorname{Ext}^1_{Ch(R)}(X,\Sigma^{-1}E),\]
where by $\operatorname{Hom}_{K(R)}$ we denote the $\operatorname{Hom}$ functor of the homotopy category, $K(R),$ of the ring $R$. The statement means exactly that $\operatorname{Hom}_{K(R)}(X,E)=0$ so we only need to prove that $\operatorname{Ext}^1_{Ch(R)}(X,\Sigma^{-1}E)=0$. 

By Corollary \ref{Filtabove} $X$ is filtered by bounded above complexes $Y$ of $\aleph_m$-presentable weakly FP-projective modules. Hence, employing the Eklof lemma for $Ch(R)$ we only need to show that $\operatorname{Ext}^1_{Ch(R)}(Y,\Sigma^{-1}E)=0$, equivalently we only need to show that $\operatorname{Hom}_{K(R)}(Y,E)=0$. 

Since $Y$ is a complex of $\aleph_m$-presentable modules and $\aleph_m$-presentable modules are $\aleph_m$-pure projective the assumptions on $E$ and Theorem \ref{nullhomotopic} give $\operatorname{Hom}_{K(R)}(Y,E)=0$ as was required.

\end{proof}

\begin{Corollary}\label{wfpprojcycles} Let $R$ be any ring and $m\geq 0$ be an integer. Then every acyclic complex $X$ of $\mathcal{C}$-$\aleph_mP\text{-}$projective $R$-modules has weakly FP-projective cycles.
    
\end{Corollary}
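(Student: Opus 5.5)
The plan is to test the cycles $Z_nX$ against an arbitrary strongly FP-injective module $S$. We reduce the vanishing of $\operatorname{Ext}^1_R(Z_nX,S)$, by dimension shifting on both sides, to a statement about a truncated injective resolution of $S$. That statement will follow from Theorem~\ref{Main} combined with Example~\ref{exampleofNmpureacyclic}.

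\textbf{Reduction to $m\geq1$.} By the Remark following Definition~\ref{WNmP}, we have $\mathsf{fpProj}=\mathcal{C}\aleph_0P\text{-}proj\subset\mathcal{C}\aleph_1P\text{-}proj$. So for $m=0$ the complex $X$ is also a complex of $\mathcal{C}$-$\aleph_1P$-projective modules, and it suffices to treat $m\geq1$. For finite $m\geq 1$ the cardinal $\aleph_m$ is a successor, hence regular, so Theorem~\ref{Main} is available.

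\textbf{Dimension shifting.} Fix $S\in\mathsf{SfpInj}(R)$ and an injective resolution $0\to S\to I^0\to I^1\to\cdots$ with cosyzygies $S^j$. We have $X_k\in\mathcal{C}\aleph_mP\text{-}proj\subset\mathsf{WfpProj}(R)$, and the pair of Proposition~\ref{module-pair} is hereditary, so $\operatorname{Ext}^i_R(X_k,S)=0$ for all $i\geq1$. The sequences $0\to Z_{k+1}X\to X_{k+1}\to Z_kX\to0$ therefore give $\operatorname{Ext}^i_R(Z_{k+1}X,S)\cong\operatorname{Ext}^{i+1}_R(Z_kX,S)$ for $i\geq 1$. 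Shifting on the injective side as well, we obtain
\[
\operatorname{Ext}^1_R(Z_nX,S)\cong\operatorname{Ext}^{m+1}_R(Z_{n-m}X,S)\cong\operatorname{Ext}^1_R(Z_{n-m}X,S^m).
\]
Each $S^j$ lies in $\mathsf{SfpInj}(R)$, so we also have $\operatorname{Ext}^1_R(X_k,S^m)=0$. Hence it suffices to show that for every $k$, every map $\varphi\colon Z_kX\to S^m$ extends to $X_k$.

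\textbf{Using null-homotopy.} Consider the complex $E^m\colon 0\to S^m\to I^m\to I^{m+1}\to\cdots$ written homologically, with $S^m$ placed in degree $k+1$. By Example~\ref{exampleofNmpureacyclic} it is $\aleph_m$-pure acyclic. Its terms lie in $\mathsf{SfpInj}(R)\subset\mathcal{C}\aleph_mP\text{-}inj$.

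Given $\varphi$, we build a chain map $f\colon X\to E^m$ as follows.
\begin{enumerate}[label=(\alph*)]
\item Set $f_j=0$ for $j>k+1$.
\item Set $f_{k+1}=\varphi\circ d_{k+1}\colon X_{k+1}\to Z_kX\to S^m$. The square with $X_{k+2}$ commutes because $d_{k+1}d_{k+2}=0$.
\item Define $f_k\colon X_k\to I^m$ by extending $Z_kX\xrightarrow{\varphi}S^m\to I^m$ along $Z_kX\hookrightarrow X_k$, using injectivity of $I^m$.
\item Proceed downward inductively. At each step the composite already constructed vanishes on $Z_jX=B_jX$, by acyclicity of $X$ and exactness of $E^m$. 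It therefore factors through $X_j/Z_jX\cong B_{j-1}X$, and then extends to $X_{j-1}$ by injectivity of the next $I$.
\end{enumerate}

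By Theorem~\ref{Main}, $f$ is null-homotopic. The only relevant component of the homotopy is $s\colon X_k\to S^m$, since $E^m$ vanishes in degree $k+2$. It satisfies $\varphi\circ d_{k+1}=f_{k+1}=s\circ d_{k+1}$. Because $d_{k+1}\colon X_{k+1}\to Z_kX$ is surjective, $s|_{Z_kX}=\varphi$, which is the required extension.

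\textbf{Main obstacle.} I expect the main care to be needed in the inductive construction of $f$ in the degrees below $k$ and in the index bookkeeping of the homotopy. The key point to check is that the component of the homotopy landing in degree $k+1$ really restricts to $\varphi$ on cycles. The remaining steps are formal consequences of Theorem~\ref{Main}, Example~\ref{exampleofNmpureacyclic} and the hereditary pair of Proposition~\ref{module-pair}.
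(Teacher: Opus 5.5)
Your proof is correct and follows the paper's argument: the same reduction from $m=0$ to $m\geq 1$, the same two-sided dimension shift to $\operatorname{Ext}^1_R(Z_{n-m}X,S^m)$, and the same application of Theorem~\ref{Main} to the $\aleph_m$-pure acyclic complex $E^m$ of Example~\ref{exampleofNmpureacyclic}. The only difference is in the last step. The paper deduces surjectivity of $\operatorname{Hom}_R(X_k,S^m)\to\operatorname{Hom}_R(Z_kX,S^m)$ from the acyclicity of $\operatorname{Hom}_R(X,E^m)$ and of $\operatorname{Hom}_R(X,I^m_\bullet)$, via the mapping cone. You instead build the chain map $X\to E^m$ extending $\varphi$ by hand and read the extension off the homotopy component $X_k\to S^m$, which is an explicit unpacking of the same idea.
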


\begin{proof} The case $m=0$ will be settled by the case $m>0$ since $\mathcal{C}$-$\aleph_0P\text{-}$projective modules are also $\mathcal{C}$-$\aleph_mP\text{-}$projective modules for any $m>0$.

Let $m>0$ be an integer, $S$ be a strongly FP-injective module, and let $E$ denote an
injective resolution of $S$.
 We also denote by $S^m$ the $m$th-cosyzygy of $S$ in
\[
E = 0 \longrightarrow S \longrightarrow I^0
  \longrightarrow I^1 \longrightarrow I^2
  \longrightarrow \cdots.
\]
and by $E^m$ the complex
\[
E^m =0 \longrightarrow S^m \longrightarrow I^m
  \longrightarrow I^{m+1} \longrightarrow \cdots.
\]

We only need to show that $\operatorname{Ext}^1(Z_n(X),S)=0$ for an arbitrary integer $n$. Since the modules of $X$ are weakly FP-projective a dimension shifting argument along $X$ followed by a dimension shifting argument along $E$ shows that
\[\operatorname{Ext}_R^1(Z_n(X),S)=\operatorname{Ext}_R^{m+1}(Z_{n-m}(X),S)=\operatorname{Ext}_R^1(Z_{n-m}(X),S^m).\]
Hence, we need to show that $\operatorname{Ext}_R^1(Z_{n-m}(X),S^m)=0$.

Consider the short exact sequence:
\[0\rightarrow Z_{n-m+1}(X)\rightarrow X_{n-m+1}\rightarrow Z_{n-m}(X)\rightarrow0,\] since $X_{n-m+1}$ is weakly FP-projective and $S^m$ is strongly FP-injective the abelian group $\operatorname{Ext}^1(X_{n-m+1},S^m)$ equals $0$. Therefore it is sufficient to show that the map \[i^*: \operatorname{Hom}_R(X_{n-m+1},S^m)\rightarrow \operatorname{Hom}_R(Z_{n-m+1}(X),S^m) \] is an epimorphism.

Consider the bounded above complex of injective modules 
\[I_\bullet^m=0\rightarrow I^m\rightarrow I^{m+1}\rightarrow\cdots,\] then since $X$ is acyclic it follows that $\operatorname{Hom}_{R}(X,I_\bullet^m)$ is an acyclic complex. By Example \ref{exampleofNmpureacyclic}, $E^m$ is an $\aleph_m$-pure acyclic complex so by Theorem \ref{Main} we have that $\operatorname{Hom}_{K(R)}(X,E^m)=0$. It is not difficult to see that we also have $\operatorname{Hom}_{K(R)}(X,\Sigma^kE^m)=0$ for every $k\in\mathbb{Z}$, hence $\operatorname{Hom}_R(X,E^m)$ is an acyclic complex. Finally, we just need to observe that the mapping cone of the obvious map $S^m\rightarrow I_\bullet^m$ is the complex $E^m$. Of course, now, we can deduce that the complex $\operatorname{Hom}_R(X,S^m)$ is acyclic and that $i^*$ is an epimorphism as needed.
    
\end{proof}

\begin{Remark} The special case of $m=0$ in the above Corollary is exactly \cite[Corollary 4.7]{Bazzoni2} which states that any acyclic complex of FP-projective modules has weakly FP-projective cycles. This is no surprise since the contents of this section are influenced by \cite{Bazzoni2}.

\end{Remark}

The following is a corollary of Corollary \ref{wfpprojcycles}.

\begin{Corollary}\label{Maincorollary} Let $R$ be a left $<\aleph_m$-coherent ring where $m$ is a non-negative integer. Then acyclic complexes of weakly FP-projective modules have weakly FP-projective cycles.
    
\end{Corollary}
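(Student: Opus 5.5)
The plan is to reduce Corollary~\ref{Maincorollary} to Corollary~\ref{wfpprojcycles} by showing that, over a left $<\aleph_m$-coherent ring, every weakly FP-projective module is $\mathcal{C}$-$\aleph_mP$-projective. Equivalently, I will show that
\[
\mathcal{C}\aleph_mP\text{-}inj=\mathsf{SfpInj}(R).
\]
The inclusion $\mathsf{SfpInj}\subseteq\mathcal{C}\aleph_mP\text{-}inj$ is already recorded in the remark after Definition~\ref{WNmP}. Once the reverse inclusion is known, the two cotorsion pairs share their right class, so $\mathcal{C}\aleph_mP\text{-}proj=\mathsf{WfpProj}$. Any acyclic complex of weakly FP-projective modules is then an acyclic complex of $\mathcal{C}$-$\aleph_mP$-projective modules, and Corollary~\ref{wfpprojcycles} gives weakly FP-projective cycles.

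For the reverse inclusion I use the cogenerating set $\mathcal{T}$ of Proposition~\ref{module-pair}. Since $\mathsf{SfpInj}(R)=\mathcal{T}^{\perp_1}$, it suffices to check that every member of $\mathcal{T}$ is isomorphic to a module in the set $\mathcal{C}\aleph_m\mathcal{P}$. That is, each $\Omega^jF$ must be an $\aleph_m$-presentable weakly FP-projective module. Weak FP-projectivity of $\Omega^jF$ is already established in the proof of Proposition~\ref{module-pair}, because $\mathcal{T}\subseteq\mathsf{WfpProj}$. So the only real point is $\aleph_m$-presentability, and this needs a good choice of projective resolutions.

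That choice is made as in the proof of Proposition~\ref{Posfpinj}, taking $\lambda=\aleph_m$. Start with a finitely presented $F$ and a presentation $0\to K\to P_0\to F\to0$ with $P_0$ finitely generated free, so that $K$ is finitely generated. By Lemma~\ref{l-coherent}(ii), $K$ is $\aleph_m$-presentable. Hence $K$ is a quotient of a free module $P_1$ of rank $<\aleph_m$, with $<\aleph_m$-generated kernel. That kernel is a $<\aleph_m$-generated submodule of a free module, so Lemma~\ref{l-coherent}(ii) again makes it $\aleph_m$-presentable. Inductively, every syzygy $\Omega^jF$ is a $<\aleph_m$-generated submodule of a free module, and is therefore $\aleph_m$-presentable.

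The case $m=0$ needs separate care, since $\aleph_0$-presentable means finitely presented. There $<\aleph_0$-coherence is ordinary coherence, and syzygies of finitely presented modules are finitely presented. One could instead reduce directly via $\mathsf{fpProj}=\mathsf{WfpProj}$ over coherent rings. I expect the main obstacle to be purely bookkeeping: the set $\mathcal{C}\aleph_m\mathcal{P}$ is defined up to isomorphism, and $\operatorname{Ext}^1$-orthogonality is isomorphism invariant, so this causes no trouble. The substantive step is the inductive use of Lemma~\ref{l-coherent}, which requires $\aleph_m$ to be regular; this holds for finite $m$.
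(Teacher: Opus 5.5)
Your proposal is correct and follows the paper's route. You choose the syzygies in $\mathcal{T}$ to be $\aleph_m$-presentable via Lemma~\ref{l-coherent}, conclude that $\mathcal{C}\aleph_mP\text{-}proj=\mathsf{WfpProj}(R)$, and then invoke Corollary~\ref{wfpprojcycles}. The paper instead appeals to Kaplansky's theorem for cotorsion pairs to identify the two pairs, whereas your sandwich $\mathcal{T}\subseteq\mathcal{C}\aleph_m\mathcal{P}\subseteq\mathsf{WfpProj}(R)$ (up to isomorphism) does this directly and is slightly more elementary. Your case $m=0$ also needs no separate treatment, since the same inductive argument with $\lambda=\aleph_0$ is just ordinary coherence.
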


\begin{proof} Over an $<\aleph_m$-coherent ring the modules in the cogenerating set $\mathcal{T}$ of Proposition \ref{module-pair} can be chosen to be $\aleph_m$-presentable. Hence, by \cite[Theorem 7.13]{Approximations} $\mathcal{T}$  and the set $\mathcal{C}\aleph_m\mathcal{P}$ of Definition \ref{WNmP} cogenerate the same cotorsion pair for any $m>0$ while for $m=0$ these sets are sets of representatives of all finitely presentable modules. This means exactly that, over $<\aleph_m$-coherent rings weakly FP-projective modules coincide with $\mathcal{C}$-$\aleph_mP$-projective modules. The desired result follows from Corollary \ref{wfpprojcycles}.
    
\end{proof}

\begin{Remark} The special case for coherent rings appeared first in \cite[Example 4.3]{SS2020}.  
    
\end{Remark}

The following definition follows the language introduced in \cite{Bazzoni}.

\begin{Definition} Let $\mathcal{M}$ be a class of modules. A module $N$ is called $\mathcal{M}$-periodic if it fits in a short exact sequence of the form:
\[0\rightarrow N\rightarrow M\rightarrow N\rightarrow 0,\]
where $M$ is a module from $\mathcal{M}$.
    
\end{Definition}

The following lemma is well known.

\begin{Lemma}\label{per-acy} Let $\mathcal{M}$ be a class of modules closed under countable direct sums and direct summands. Then the following conditions are equivalent:

(i) Every $\mathcal{M}$-periodic module belongs to $\mathcal{M}$.

(ii) Every cycle of an acyclic complex with entries in $\mathcal{M}$ belongs to $\mathcal{M}$. 
    
\end{Lemma}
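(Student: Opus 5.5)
The two implications go by opposite constructions. For (ii)$\Rightarrow$(i) one wraps a periodic module into an acyclic complex. For (i)$\Rightarrow$(ii) one glues the cycles of an acyclic complex into a single periodic module using countable direct sums.

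\emph{(ii)$\Rightarrow$(i).} Let $N$ be $\mathcal{M}$-periodic, witnessed by $0\to N\xrightarrow{f} M\xrightarrow{g} N\to0$ with $M\in\mathcal{M}$. Form the complex $P$ with $P_n=M$ for all $n\in\mathbb{Z}$ and all differentials equal to $f\circ g\colon M\to M$. Since $f$ is monic and $g$ is epic, $\ker(fg)=\ker g=\operatorname{Im} f$ and $\operatorname{Im}(fg)=\operatorname{Im} f$. Hence $P$ is acyclic, and each of its cycles is $Z_n(P)=\operatorname{Im} f\cong N$. Applying (ii) to $P$ gives $N\in\mathcal{M}$. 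This direction uses no closure hypotheses.

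\emph{(i)$\Rightarrow$(ii).} Let $X$ be an acyclic complex with $X_n\in\mathcal{M}$ for all $n$, and fix an integer $k$. Put
\[
N=\bigoplus_{n\in\mathbb{Z}} Z_n(X),\qquad M=\bigoplus_{n\in\mathbb{Z}} X_n .
\]
Because $\mathbb{Z}$ is countable and $\mathcal{M}$ is closed under countable direct sums, $M\in\mathcal{M}$. Since $X$ is acyclic, there are short exact sequences $0\to Z_n(X)\to X_n\to Z_{n-1}(X)\to0$ for every $n$. Taking their direct sum over all $n$ gives an exact sequence
\[
0\longrightarrow \bigoplus_{n}Z_n(X)\longrightarrow \bigoplus_{n}X_n\longrightarrow \bigoplus_{n}Z_{n-1}(X)\longrightarrow0 ,
\]
because direct sums of short exact sequences are exact. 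Reindexing $n\mapsto n+1$ identifies the right-hand term with $N$, so the sequence has the form $0\to N\to M\to N\to0$. Thus $N$ is $\mathcal{M}$-periodic, and (i) gives $N\in\mathcal{M}$. Finally, $Z_k(X)$ is a direct summand of $N$, so closure of $\mathcal{M}$ under direct summands gives $Z_k(X)\in\mathcal{M}$.

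No step here is a real obstacle. The only point needing care is the index shift that makes the quotient of the summed sequence isomorphic to its kernel, so that the sequence genuinely witnesses periodicity. This is also the place where both hypotheses on $\mathcal{M}$ are used: countable direct sums are needed to get $M\in\mathcal{M}$, and direct summands are needed to recover the individual cycle $Z_k(X)$ from $N$.
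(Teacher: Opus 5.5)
Your proof is correct. The paper gives no proof of this lemma and calls it well known; your argument is the standard one it relies on. For (ii)$\Rightarrow$(i) you splice copies of the periodic sequence into a complex with differential $fg$, which is also how the paper argues in Corollary~\ref{ce-wfp-periodicity}. For (i)$\Rightarrow$(ii) you sum the sequences $0\to Z_n(X)\to X_n\to Z_{n-1}(X)\to0$ over $n\in\mathbb{Z}$ to get $0\to\bigoplus_n Z_n(X)\to\bigoplus_n X_n\to\bigoplus_n Z_n(X)\to0$, and then pass to a direct summand.
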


The following proposition is obvious.

\begin{Proposition}
    
 Let $m$ be a non-integer integer and $N$ be a $\mathcal{C}\aleph_mP\text{-}proj$-periodic module. Then $N$ is a weakly FP-projective module.
    
\end{Proposition}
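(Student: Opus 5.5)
The plan is to turn the periodic module into the cycle module of a two-sided acyclic complex and then apply Corollary \ref{wfpprojcycles}. (The hypothesis ``non-integer integer'' should read ``non-negative integer''; we treat $m\geq 0$ as an integer.)

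Let $N$ be $\mathcal{C}\aleph_mP\text{-}proj$-periodic, witnessed by a short exact sequence $0\rightarrow N\xrightarrow{f} M\xrightarrow{g} N\rightarrow 0$ with $M\in\mathcal{C}\aleph_mP\text{-}proj$. Splice infinitely many copies of this sequence, in both directions, to form the complex
\[
X=\cdots\longrightarrow M\xrightarrow{fg}M\xrightarrow{fg}M\longrightarrow\cdots .
\]
In every degree we have $\ker(fg)=\ker g=f(N)$ and $\operatorname{im}(fg)=f(N)$, since $g$ is surjective and $f$ is injective. Hence $X$ is acyclic and every cycle module satisfies $Z_n(X)\cong N$.

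All entries of $X$ equal $M$, so they lie in $\mathcal{C}\aleph_mP\text{-}proj$. Corollary \ref{wfpprojcycles} holds over any ring and for every integer $m\geq 0$. It therefore shows that the cycles of $X$ are weakly FP-projective, and so $N$ is weakly FP-projective.

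There is no real obstacle. The only points to check are the exactness of the spliced complex, which is the computation above, and that Corollary \ref{wfpprojcycles} needs no coherence hypothesis, which is true as stated. Note that we cannot instead invoke Lemma \ref{per-acy}, because the conclusion concerns the larger class $\mathsf{WfpProj}$ rather than $\mathcal{C}\aleph_mP\text{-}proj$ itself; the direct splicing argument avoids this.
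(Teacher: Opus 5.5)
Your proof is correct and is essentially the paper's own argument. The paper leaves this proposition as obvious, and its proof of the CE version (Corollary~\ref{ce-wfp-periodicity}) does what you do: it splices the periodic sequence into an acyclic complex $\cdots\to M\to M\to\cdots$ with all cycles isomorphic to $N$, then applies Corollary~\ref{wfpprojcycles}; you are also right to read ``non-integer'' as ``non-negative''.
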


The preceding proposition also has a Cartan--Eilenberg version. Here CE-exactness allows
the result to be applied simultaneously to components,
cycles, boundaries, and homology.

\begin{Corollary}\label{ce-wfp-periodicity}
Let $R$ be any ring and let $m\geq0$ be an integer.
Suppose that
\[
    0\longrightarrow X\longrightarrow P
      \longrightarrow X\longrightarrow0
\]
is a CE-exact sequence of complexes with
\[
    P\in\operatorname{CE}
       \bigl(\mathcal C\aleph_mP\text{-}proj\bigr).
\]
Then
\[
    X\in\operatorname{CE}
       \bigl(\mathsf{WfpProj}(R)\bigr).
\]

In particular, if $R$ is left $<\aleph_m$-coherent,
then every complex that is periodic with respect to
$\operatorname{CE}(\mathsf{WfpProj}(R))$ in the
CE-exact structure belongs to
$\operatorname{CE}(\mathsf{WfpProj}(R))$.
\end{Corollary}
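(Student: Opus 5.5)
We will reduce the statement to the module-level periodicity result by applying the four functors $(-)_n$, $Z_n$, $B_n$ and $H_n$ to the given CE-exact sequence. By definition of CE-exactness, for every $n\in\mathbb Z$ each of the sequences
\[
0\to X_n\to P_n\to X_n\to0,\qquad 0\to Z_nX\to Z_nP\to Z_nX\to0,
\]
\[
0\to B_nX\to B_nP\to B_nX\to0,\qquad 0\to H_nX\to H_nP\to H_nX\to0
\]
is a short exact sequence of modules. Since $P\in\operatorname{CE}(\mathcal C\aleph_mP\text{-}proj)$, the middle terms $P_n$, $Z_nP$, $B_nP$, $H_nP$ all lie in $\mathcal C\aleph_mP\text{-}proj$. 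Hence each of $X_n$, $Z_nX$, $B_nX$, $H_nX$ is $\mathcal C\aleph_mP\text{-}proj$-periodic, and the preceding Proposition yields that all of them are weakly FP-projective, i.e.\ $X\in\operatorname{CE}(\mathsf{WfpProj}(R))$.

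The one point needing care is the preceding Proposition itself, which the paper calls obvious. To justify it I will splice the periodic sequence $0\to N\to M\to N\to0$ with itself infinitely in both directions to obtain an acyclic complex $\cdots\to M\to M\to M\to\cdots$ with all components $M\in\mathcal C\aleph_mP\text{-}proj$ and all cycles equal to $N$. Corollary \ref{wfpprojcycles} then gives that $N$ is weakly FP-projective. This works for every integer $m\geq0$, and no regularity hypothesis on $\aleph_m$ is needed since Corollary \ref{wfpprojcycles} is stated for all integers $m\geq0$.

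For the ``in particular'' assertion, I will assume $R$ is left $<\aleph_m$-coherent and take a CE-exact sequence $0\to X\to P\to X\to0$ with $P\in\operatorname{CE}(\mathsf{WfpProj}(R))$. The proof of Corollary \ref{Maincorollary} shows that over such rings $\mathsf{WfpProj}(R)=\mathcal C\aleph_mP\text{-}proj$; for $m=0$ this uses coherence, where both classes coincide with $\mathsf{fpProj}$. Hence $\operatorname{CE}(\mathsf{WfpProj}(R))=\operatorname{CE}(\mathcal C\aleph_mP\text{-}proj)$, and the first part applies.

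I expect no serious obstacle. The only delicate step is confirming that CE-exactness really gives exactness on all four functors, and that is built into the definition (or follows from \cite[Lemma 5.2]{En}).
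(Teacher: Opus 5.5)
Your proposal is correct and follows the paper's proof essentially step for step. You apply $(-)_n$, $Z_n$, $B_n$ and $H_n$ to get module sequences $0\to M\to Q\to M\to0$ with $Q\in\mathcal C\aleph_mP\text{-}proj$, splice each into an acyclic complex, and invoke Corollary~\ref{wfpprojcycles}; for the final assertion you use $\mathcal C\aleph_mP\text{-}proj=\mathsf{WfpProj}(R)$ over left $<\aleph_m$-coherent rings, exactly as the paper does.
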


\begin{proof}
For every $n\in\mathbb Z$, applying any of the functors
\[
    (-)_n,\qquad Z_n,\qquad B_n,\qquad H_n
\]
gives a short exact sequence
\[
    0\longrightarrow M\longrightarrow Q
      \longrightarrow M\longrightarrow0
\]
with $Q\in\mathcal C\aleph_mP\text{-}proj$.
Splicing copies of this sequence gives an acyclic
complex with all components equal to $Q$ and all
cycles isomorphic to $M$.
Corollary~\ref{wfpprojcycles} shows that $M$ is
weakly FP-projective.

Thus the components, cycles, boundaries, and homology
modules of $X$ are weakly FP-projective, as required.
For the final assertion, use
$\mathcal C\aleph_mP\text{-}proj=\mathsf{WfpProj}(R)$
over a left $<\aleph_m$-coherent ring.
\end{proof}

We will show that it is not true, generally, that the cycles of acyclic complexes of $\mathcal{C}$-$\aleph_m$P-projective modules are $\mathcal{C}$-$\aleph_{m+1}$P-projective modules. But first, we establish the following:

\begin{Proposition}\label{lastprop} Let $R$ be a ring and $m$ be such that $\aleph_m$ is a regular cardinal. Consider the following conditions:

(i) $R$ is $<\aleph_m$-coherent.

(ii) The cotorsion pairs $\bigl(\mathcal{C}\aleph_mP\text{-}proj,\mathcal{C}\aleph_mP\text{-}inj\bigr)$ and $\bigl(\mathsf{WfpProj}(R),\mathsf{SfpInj}(R)\bigr)$ are equal.

(iii) The cotorsion pair $\bigl(\mathcal{C}\aleph_mP\text{-}proj,\mathcal{C}\aleph_mP\text{-}inj\bigr)$ is hereditary.

(iv) Every acyclic complex of $\mathcal{C}$-$\aleph_m$P-projective modules has $\mathcal{C}$-$\aleph_m$P-projective cycles.  

Then (i),(ii) and (iii) are equivalent. Furthermore, (iv) implies (i),(ii) and (iii). If $m$ is also an integer then all four conditions are equivalent.
\end{Proposition}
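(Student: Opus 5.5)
The cycle of implications I plan is (i)$\Rightarrow$(ii)$\Rightarrow$(iii)$\Rightarrow$(i), followed by (iv)$\Rightarrow$(iii), and finally (i)$\Rightarrow$(iv) when $m$ is an integer.

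\textbf{(i)$\Rightarrow$(ii).} This is already contained in the proof of Corollary \ref{Maincorollary}. Over a left $<\aleph_m$-coherent ring, every finitely presented module has a projective resolution by $<\aleph_m$-generated projectives, and all syzygies are $\aleph_m$-presentable by Lemma \ref{l-coherent}. So the set $\mathcal T$ of Proposition \ref{module-pair} may be taken to consist of $\aleph_m$-presentable weakly FP-projective modules, up to isomorphism. This gives $\mathcal T\subseteq\mathcal C\aleph_m\mathcal P\subseteq\mathsf{WfpProj}(R)$. Taking right orthogonals,
\[
\mathsf{SfpInj}(R)\subseteq\mathcal C\aleph_mP\text{-}inj\subseteq\mathcal T^{\perp_1}=\mathsf{SfpInj}(R),
\]
so the two pairs coincide.

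\textbf{(ii)$\Rightarrow$(iii).} This is immediate, because $(\mathsf{WfpProj}(R),\mathsf{SfpInj}(R))$ is hereditary by Proposition \ref{module-pair}.

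\textbf{(iii)$\Rightarrow$(i).} This is the step I expect to be the crux. By Lemma \ref{l-coherent} it suffices to show that every finitely generated left ideal $L$ is $\aleph_m$-presentable. The plan is as follows.
\begin{enumerate}
\item Observe that $R/L$ is finitely presented, hence lies in $\mathsf{fpProj}\subseteq\mathcal C\aleph_mP\text{-}proj$.
\item Use heredity: the left class is then closed under kernels of epimorphisms, so $L=\ker(R\to R/L)$ lies in $\mathcal C\aleph_mP\text{-}proj$.
\item For $m>0$, apply Lemma \ref{Filt} to get a filtration of $L$ by $\aleph_m$-presentable modules.
\item Since $L$ is finitely generated, the generators already lie in some stage of the filtration. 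By the Hill lemma (\cite[Theorem 7.10]{Approximations}) one can choose a $<\aleph_m$-presented filtered submodule containing them, which must then equal $L$. Alternatively, $L$ is a direct summand of a module filtered by fewer than $\aleph_m$ such pieces, and is therefore $\aleph_m$-presentable.
\item For $m=0$, invoke the classical fact that $(\mathsf{fpProj},\mathsf{fpInj})$ is hereditary iff $R$ is left coherent, since heredity makes FP-injectives coresolving and hence strongly FP-injective. The same coresolving argument gives an alternative route for general $m$: $\mathcal C\aleph_mP\text{-}inj$ is coresolving, so it is contained in $\mathsf{SfpInj}(R)$, which yields (ii) directly.
\end{enumerate}

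\textbf{(iv)$\Rightarrow$(iii).} Assume (iv), and let $A\in\mathcal C\aleph_mP\text{-}proj$ with a projective resolution $P_\bullet\to A$. Splice this resolution with a complete-cotorsion-pair coresolution of $A$ whose terms are in $\mathcal C\aleph_mP\text{-}proj$. One way to obtain such a coresolution is to take special preenvelopes $0\to A\to B\to A'\to 0$ with $B$ in both classes and $A'$ in the left class (the left class is closed under extensions), and iterate. The result is an acyclic complex with terms in the left class. By (iv) its cycles, in particular all syzygies $\Omega^jA$, are in $\mathcal C\aleph_mP\text{-}proj$. Dimension shifting then gives $\operatorname{Ext}^{i}(A,C)=\operatorname{Ext}^1(\Omega^{i-1}A,C)=0$ for $C$ in the right class, which is heredity.

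\textbf{(i)$\Rightarrow$(iv) for integer $m$.} By (ii) the class $\mathcal C\aleph_mP\text{-}proj$ equals $\mathsf{WfpProj}(R)$, so Corollary \ref{Maincorollary} applies directly.
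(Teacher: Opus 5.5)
Your proposal is correct and follows the paper's proof essentially step for step: the same reduction to choosing $\mathcal{T}$ inside $\mathcal{C}\aleph_m\mathcal{P}$ for (i)$\Rightarrow$(ii), the same Kaplansky-plus-Hill-lemma argument on a finitely generated left ideal for (iii)$\Rightarrow$(i), and Corollary~\ref{Maincorollary} for (i)$\Rightarrow$(iv). The only deviation is in (iv)$\Rightarrow$(iii). There you show that all syzygies stay in the left class and then dimension-shift, whereas the paper proves closure under kernels of epimorphisms by splicing $0\to K\to A\to B\to 0$ with a projective resolution of $K$. Your version is equally valid, but the preenvelope coresolution is unnecessary, since $\cdots\to P_1\to P_0\to A\to 0$ is already an acyclic complex with terms in $\mathcal{C}\aleph_mP\text{-}proj$.
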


\begin{proof}

Even though we can directly prove the case $m=0$ we only prove the statement for $m>0$ such that $\aleph_m$ is a regular cardinal. For the case $m=0$, we refer the reader to \cite[Corollary 4.6]{Bazzoni2}.

The implication (i)$\Rightarrow$(ii) has been explained in the proof of Corollary \ref{Maincorollary}. The implication (ii)$\Rightarrow$(iii) is obvious. 
We now prove the implication $(iii)\Rightarrow(i)$. Employing Lemma \ref{l-coherent} we only need to show that any finitely generated left ideal $I$ of $R$ is $\aleph_m$-presentable. $R/I$ is a finitely presented module so by assumption (iii) we deduce that $I$ is a $\mathcal{C}$-$\aleph_m$P-projective module. By the definition of the set $\mathcal{C}\aleph_m\mathcal{P}$ and the Kaplansky Theorem for cotorsion pairs \emph{(\cite[Theorem 7.13]{Approximations})} we deduce that $I$ is filtered by $\aleph_m$-presentable modules. Finally, $I$ is $\aleph_m$-presentable by \cite[Theorem 7.10(Hill Lemma ($H_4$))]{Approximations}, use there $M=I$, $N=0$ and $X$ any finite generating set of $I$.

For the implication (iv)$\Rightarrow$(iii) we only need to show that $\mathcal{C}$-$\aleph_m$P-projective modules are closed under kernels of epimorphisms. Assume that \[0\rightarrow K\rightarrow A\rightarrow B\rightarrow 0\] is a short exact sequence with $A,B\in\mathcal{C}\aleph_mP\text{-}proj$. We need to show that $K\in\mathcal{C}\aleph_mP\text{-}proj$. Splicing the above exact sequence with a projective resolution of $K$ we get an acyclic complex $P$ of $\mathcal{C}$-$\aleph_m$P-projective modules. Since $K$ appears as a cycle of $P$, the assumption (iv) gives $K\in\mathcal{C}\aleph_mP\text{-}proj$.

For an integer $m>0$, the implication (i)$\Rightarrow$(iv) is exactly Corollary \ref{Maincorollary}.

\end{proof}

\begin{Corollary}\label{cross-cardinal-periodicity}
Let $R$ be a ring, and let $m<n$ be ordinals such that
$\aleph_m$ and $\aleph_n$ are regular cardinals.
Consider the following conditions:
\begin{enumerate}[label=(\roman*)]
\item $R$ is left $<\aleph_n$-coherent.
\item Every acyclic complex of
$\mathcal C\aleph_mP$-projective modules has
$\mathcal C\aleph_nP$-projective cycles.
\item Every
$\mathcal C\aleph_mP\text{-}proj$-periodic module
is $\mathcal C\aleph_nP$-projective.
\end{enumerate}
Then {\rm(ii)} and {\rm(iii)} are equivalent,
and each implies {\rm(i)}.
If $m$ is a non-negative integer, all three
conditions are equivalent.
\end{Corollary}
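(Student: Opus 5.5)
We prove (ii)$\Leftrightarrow$(iii) by Lemma~\ref{per-acy}-type arguments, then show (iii)$\Rightarrow$(i), and finally (i)$\Rightarrow$(ii) when $m$ is an integer.

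\emph{(ii)$\Leftrightarrow$(iii).} For (ii)$\Rightarrow$(iii), take a periodic sequence $0\to N\to M\to N\to 0$ with $M\in\mathcal C\aleph_mP\text{-}proj$. Splicing copies of it gives an acyclic complex with all components $M$ and all cycles $N$. Then (ii) yields $N\in\mathcal C\aleph_nP\text{-}proj$.

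For (iii)$\Rightarrow$(ii), let $X$ be acyclic with components in $\mathcal C\aleph_mP\text{-}proj$. Use the standard Bazzoni--Cortés-Izurdiaga--Estrada trick: the complex $\bigoplus_{k\in\mathbb Z}\Sigma^kX$ has $0$th cycle module $N=\bigoplus_n Z_nX$. There is a short exact sequence $0\to N\to \bigoplus_n X_n\to N\to 0$, and $\bigoplus_n X_n\in\mathcal C\aleph_mP\text{-}proj$, since left classes of cotorsion pairs are closed under direct sums. Hence (iii) gives $N\in\mathcal C\aleph_nP\text{-}proj$. Each $Z_nX$ is a direct summand of $N$, and left classes are closed under summands, so each $Z_nX$ lies in $\mathcal C\aleph_nP\text{-}proj$.

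\emph{(ii)$\Rightarrow$(i).} Let $I$ be a finitely generated left ideal. Since $R/I$ is finitely presented, it is FP-projective, hence $\mathcal C\aleph_mP$-projective by Remark (ii), and $R$ is projective. Splice $0\to I\to R\to R/I\to0$ with a projective resolution of $I$ on the left and a projective resolution of $R/I$ continuing to the right. The right-hand part is handled by the usual construction for modules in a left class: embed $R/I$ into a module of the class with cokernel in the class, for instance via special preenvelopes with respect to the complete pair cogenerated by $\mathcal C\aleph_m\mathcal P$, which exist by completeness. Iterating gives a coresolution by $\mathcal C\aleph_mP$-projectives, so we obtain an acyclic complex of $\mathcal C\aleph_mP$-projective modules having $I$ as a cycle. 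Condition (ii) then gives $I\in\mathcal C\aleph_nP\text{-}proj$. As in the proof of (iii)$\Rightarrow$(i) of Proposition~\ref{lastprop}, Kaplansky's theorem \cite[Theorem 7.13]{Approximations} shows $I$ is filtered by $\aleph_n$-presentable modules. The Hill Lemma (H4) with a finite generating set then gives that $I$ is $\aleph_n$-presentable, and Lemma~\ref{l-coherent} yields (i).

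The main obstacle is building the right half of this complex: we need coresolutions inside $\mathcal C\aleph_mP\text{-}proj$, whose cokernels must again lie in the class, and this class need not be coresolving. If the completeness argument fails, the fallback is to place $I$ as a cycle using only the left half. Extend by $0\to I\to R\to R/I\to 0$, then continue by the periodic trick: replace $R/I$ by $\bigoplus$ of shifted copies to reach a periodic sequence, reducing to (iii).

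\emph{(i)$\Rightarrow$(ii) for integer $m$.} Since $m<n$ and $\aleph_m$-presentable modules are $\aleph_n$-presentable, we have $\mathcal C\aleph_m\mathcal P\subseteq$ (representatives of) $\mathcal C\aleph_n\mathcal P$, so $\mathcal C\aleph_mP\text{-}proj\subseteq\mathcal C\aleph_nP\text{-}proj$. By Corollary~\ref{wfpprojcycles}, the cycles of the acyclic complex are weakly FP-projective. Under (i), the proof of Corollary~\ref{Maincorollary} (i.e.\ Proposition~\ref{lastprop}(i)$\Rightarrow$(ii) at level $n$) gives $\mathsf{WfpProj}(R)=\mathcal C\aleph_nP\text{-}proj$. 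If $n$ is infinite, so that $\aleph_n$ is regular but not of integer index, we instead use that $<\aleph_n$-coherence still lets $\mathcal T$ consist of $\aleph_n$-presentable modules, and the same Kaplansky argument gives the equality.
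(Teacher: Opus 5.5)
Your proposal is correct. For (ii)$\Leftrightarrow$(iii), and for (i)$\Rightarrow$(ii) with $m$ an integer, you follow the paper: Corollary~\ref{wfpprojcycles} together with $\mathcal C\aleph_nP\text{-}proj=\mathsf{WfpProj}(R)$ from Proposition~\ref{lastprop}. That proposition already covers every regular $\aleph_n$, so your separate case for infinite $n$ is not needed.

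For (ii)$\Rightarrow$(i) the routes differ slightly. The paper applies (ii) to the augmented resolution $\cdots\to P_1\to P_0\to F\to0\to0\to\cdots$ to put every $\Omega^jF\in\mathcal T$ into $\mathcal C\aleph_nP\text{-}proj$. It then deduces that the two cotorsion pairs coincide and quotes Proposition~\ref{lastprop}. You only treat finitely generated ideals and rerun the Kaplansky--Hill argument of that proposition directly. Your route is a bit more economical; the paper's also yields $\mathcal C\aleph_nP\text{-}proj=\mathsf{WfpProj}(R)$ as a by-product.

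The ``main obstacle'' you identify is not one. The complex $\cdots\to P_1\to P_0\to R\to R/I\to0\to0\to\cdots$ is already acyclic with all components in $\mathcal C\aleph_mP\text{-}proj$, because the zero module lies in every left class. The paper pads with zeros in the same way in the proof of Proposition~\ref{lastprop}. Your special-preenvelope construction also works, but you should say why the middle term $C$ of $0\to R/I\to C\to D\to0$ lies in the left class: it is an extension of $D$ by $R/I$, both of which lie in it. The fallback sentence is unnecessary and, as written, not meaningful, so drop it.
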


\begin{proof}

The equivalence of (ii) and (iii) follows by a similar argument as in Lemma \ref{per-acy}. Assuming (ii), since projective modules and finitely presented modules are $\mathcal{C}$-$\aleph_m$P-projective modules, by the assumption we can deduce that all modules in the set $\mathcal{T}$ of Proposition \ref{module-pair} are $\mathcal{C}$-$\aleph_{n}$P-projective modules. Hence, $\mathcal{C}$-$\aleph_{n}$P-projective modules and weakly FP-projective modules coincide, so by Proposition \ref{lastprop} $R$ is  $<\aleph_n$-coherent.

 Finally, suppose $m$ is a non-negative integer
and (i) holds. Proposition~\ref{lastprop} gives
$\mathcal C\aleph_nP\text{-}proj=\mathsf{WfpProj}(R)$.
By Corollary~\ref{wfpprojcycles}, every acyclic complex
with components in $\mathcal C\aleph_mP\text{-}proj$ has weakly
FP-projective cycles, proving (ii).
\end{proof}

\noindent\textbf{Funding.} I. Kaperonis and K.~Golfis were supported by the Hellenic Foundation for Research and Innovation (H.F.R.I.) under the ``3rd Call for H.F.R.I. Research Projects to Support Faculty Members and Researchers'', project number 24921.

\bigskip
\noindent\begin{minipage}{\textwidth}
\small
Ilias Kaperonis\\
Department of Mathematics, University of Athens, Athens 15784, Greece\\
E-mail: \href{mailto:kaperonn@math.uoa.gr}{\texttt{kaperonn@math.uoa.gr}}\\
Konstantinos Golfis\\
Department of Mathematics, University of Athens, Athens 15784, Greece\\
E-mail: \href{mailto:golfisk@math.uoa.gr}{\texttt{golfisk@math.uoa.gr}}
\end{minipage}

\end{document}